\DeclareMathOperator*{\esssup}{ess\,sup}
\newtheorem{theorem}{Theorem}[section]
\newtheorem{lemma}[theorem]{Lemma}
\newtheorem{proposition}[theorem]{Proposition}
\newtheorem{definition}[theorem]{Definition}
\newtheorem{remark}[theorem]{Remark}
\newtheorem{assumption}[theorem]{Assumption}
\newcommand{\msc}[1]{\textbf{MSC2010 Classification:} #1.}
\newcommand{\keywords}[1]{\textbf{Key words:} #1.}
\begin{document}
\title{\textbf{
Finite-horizon optimal multiple switching with signed switching costs}\footnote{This research was partially supported by EPSRC grant EP/K00557X/1.}}

\author{Randall Martyr\footnote{School of Mathematics, The University of Manchester, Oxford Road, Manchester M13 9PL, United Kingdom. email: \texttt{randall.martyr@postgrad.manchester.ac.uk}}}
\maketitle
\begin{abstract}
This paper is concerned with optimal switching over multiple modes in continuous time and on a finite horizon. The performance index includes a running reward, terminal reward and switching costs that can belong to a large class of stochastic processes. Particularly, the switching costs are modelled by right-continuous with left-limits processes that are quasi-left-continuous and can take both positive and negative values. We provide sufficient conditions leading to a well known probabilistic representation of the value function for the switching problem in terms of interconnected Snell envelopes. We also prove the existence of an optimal strategy within a suitable class of admissible controls, defined iteratively in terms of the Snell envelope processes.
\end{abstract}
%%%%%%%%%%%%%%%%%%%%%%%%%%%%%%%%%%%%%%%%%%%%%%%%%%%%%%%%%%%%%%%%%%%%%%%%%%%%%%%%%%%%%%%%%%%%%
\msc{93E20, 60G40, 91B99, 62P20}
\vspace{+4pt}

\noindent\keywords{optimal switching, real options, stopping times, optimal stopping problems, Snell envelope} 
%%%%%%%%%%%%%%%%%%%%%%%%%%%%%%%%%%%%%%%%%%%%%%%%%%%%%%%%%%%%%%%%%%%%%%%%%%%%%%%%%%%%%%%%%%%%%
\section{Introduction.}
The recent paper by Guo and Tomecek \cite{Guo2008b} showed a connection between Dynkin games and optimal switching problems with signed (positive and negative) switching costs. The results were obtained for a model in which the cost/reward processes were merely required to be adapted and satisfy mild integrability conditions. However, there are few theoretical results on the existence of optimal switching control policies under such general conditions.

Optimal switching for models driven by discontinuous stochastic processes has been studied previously in papers such as \cite{Hamadene2007a,Morimoto1987}. The paper \cite{Morimoto1987} used optimal stopping theory to study the optimal switching problem on an infinite time horizon with multiple modes. The model described in \cite{Morimoto1987} has bounded and non-negative running rewards which are driven by right-continuous processes, and switching costs that are strictly positive and constant. The paper \cite{Hamadene2007a} studied the finite time horizon optimal switching problem with two modes. The model has running rewards adapted to a filtration generated by a Brownian motion and an independent Poisson random measure, but excludes terminal data and assumes switching costs that are strictly positive and constant. The more recent paper \cite{Djehiche2009} has a model similar to \cite{Hamadene2007a} with switching costs assumed to be continuous stochastic processes and filtration generated by a Brownian motion. Nevertheless, the authors stated (\cite[p.~2753]{Djehiche2009}) that their results can be adapted to a more general setup, possibly by using the same approach as in \cite{Hamadene2007a}.

Most of the literature on optimal switching assumes non-negative switching costs. However, signed switching costs are important in models where the controller can (partially) recover its investment, or receive a subsidy/grant for investing in a new technology such as renewable (green) energy production \cite{Guo2008b,Lumley2001,LyVath2008}. The preprint \cite{ElAsri2012} sought to generalise the results of \cite{Djehiche2009} by permitting signed switching costs, but at the expense of limiting the total number of switches incurring negative costs. This limitation is absent in papers such as \cite{Bouchard2009,Lundstrom2013a} where the optimal switching problem was studied within a Markovian setting. There are, however, other structural conditions and hypotheses made in \cite{Bouchard2009,Lundstrom2013a}. For example, there is an assumption in \cite[p.~1221]{Lundstrom2013a} that the terminal reward is the same for all modes (which also implies the terminal values of the switching costs are non-negative).

The Snell envelope approach, also known as the \emph{method of essential supremum} \cite{Peskir2006}, is a general approach to optimal stopping problems which does not require Markovian assumptions on the data. It was used in the aforementioned papers \cite{Hamadene2007a,Djehiche2009} and the paper \cite{Bayraktar2010} on optimal switching problems for one-dimensional diffusions (albeit in a slightly different manner). In this paper we use the theory of Snell envelopes to extend Theorems 1 (verification) and 2 (existence) of \cite{Djehiche2009}. Our model allows for non-zero terminal data and switching costs which are real-valued stochastic processes with paths that are right-continuous with left limits and \emph{quasi-left-continuous}.

In contrast to \cite{ElAsri2012}, our results do not presuppose a limit on the total number of switches incurring negative costs. We do, however, require that a certain ``martingale hypothesis'' $\textbf{M}$ on the switching costs be satisfied (see Section~\ref{Section:ArbitraryNumberOfSwitches} below). This hypothesis can be verified in many cases of interest, including the case of two modes, and does not require the terminal reward to be the same for all modes. We also assume that the filtration, in addition to satisfying the usual conditions of right-continuity and completeness, is \emph{quasi-left-continuous}. This property, which generalises the assumption made in \cite{Hamadene2007a}, is satisfied in many applications. For example, it holds when the filtration is the natural (completed) filtration of a L\'{e}vy process. Such models have wide ranging applications in finance, insurance and control theory \cite{Oksendal2007}.

The layout of paper is as follows. Section~\ref{Section:Optimal-Switching-Definitions} introduces the probabilistic model and optimal switching problem. Preliminary concepts from the general theory of stochastic processes and optimal stopping are recalled in Section~\ref{Section:Optimal-Switching-Preliminaries}. The modelling assumptions for the optimal switching problem are given in Section~\ref{Section:Optimal-Switching-Assumptions}. A verification theorem establishing the relationship between the optimal switching problem and iterative optimal stopping is given in Section~\ref{Section:Optimal-Switching-VerificationTheorem}. Sufficient conditions for validating the verification theorem's hypotheses are discussed in Section~\ref{Section:Optimal-Switching-Existence-Proof}. The conclusion, appendix, acknowledgements and references then follow.

\section{Definitions.}\label{Section:Optimal-Switching-Definitions}
\subsection{Probabilistic setup.}
We work on a time horizon $[0,T]$, where $0 < T < \infty$. It is assumed that a complete filtered probability space, $\left(\Omega,\mathcal{F},\mathbb{F},\mathsf{P}\right)$, has been given and the filtration $\mathbb{F} = \left(\mathcal{F}_{t}\right)_{0 \le t \le T}$ satisfies the usual conditions of right-continuity and augmentation by the $\mathsf{P}$-null sets. Let $\mathsf{E}$ denote the corresponding expectation operator. We use $\mathbf{1}_{A}$ to represent the indicator function of a set (event) $A$. The shorthand notation a.s. means ``almost surely''. Let $\mathcal{T}$ denote the set of $\mathbb{F}$-stopping times $\nu$ which satisfy $0 \le \nu \le T$ \hspace{1bp} $\mathsf{P}$-a.s. For a given $S \in \mathcal{T}$, write $\mathcal{T}_{S} = \{\nu \in \mathcal{T} \colon  \nu \ge S \enskip \mathsf{P}-a.s.\}$. Unless otherwise stated, a stopping time is assumed to be defined with respect to $\mathbb{F}$. For notational convenience the dependence on $\omega \in \Omega$ is often suppressed.
\subsection{Problem definition.}
The controller in an optimal switching problem influences a dynamical system over the horizon $[0,T]$ by choosing operating modes from a finite set $\mathbb{I} = \lbrace 1,\ldots,m \rbrace$ with $m \ge 2$. The instantaneous profit in mode $i \in \mathbb{I}$ is a mapping $\psi_{i} \colon \Omega \times [0,T] \to \mathbb{R}$. There is a cost for switching from mode $i$ to $j$ which is given by $\gamma_{i,j} \colon \Omega \times [0,T] \to \mathbb{R}$. There is also a reward for being in mode $i \in \mathbb{I}$ at time $T$, denoted by $\Gamma_{i}$, which is a real-valued random variable. The assumptions on these costs / rewards are discussed below in Section~\ref{Section:Optimal-Switching-Assumptions}.

\begin{definition}[Admissible Switching Control Strategies]\label{Definition:Optimal-Switching-AdmissibleStrategies}
	Let $t \in [0,T]$ and $i \in \mathbb{I}$ be given. An admissible switching control strategy starting from $(t,i)$ is a double sequence $\alpha = \left(\tau_{n},\iota_{n}\right)_{n \ge 0}$ of stopping times $\tau_{n} \in \mathcal{T}_{t}$ and mode indicators $\iota_{n}$ such that:
	\begin{enumerate}
		\item $\tau_{0} = t$ and the sequence $\lbrace \tau_{n} \rbrace_{n \ge 0}$ is non-decreasing;
		\item Each $\iota_{n} \colon \Omega \to \mathbb{I}$ is $\mathcal{F}_{\tau_{n}}$-measurable; $\iota_{0} = i$ and $\iota_{n} \neq \iota_{n+1}$ for $n \ge 0$;
		\item Only a finite number of switching decisions can be made before the terminal time $T$:
		\begin{equation}\label{eq:Optimal-Switching-FiniteStrategy}
		\mathsf{P}\left(\{\tau_{n} < T,\hspace{1bp} \forall n \ge 0\}\right) = 0.
		\end{equation}
		\item The family of random variables $\{ C^{\alpha}_{n}\}_{n \ge 1}$, where $C^{\alpha}_{n}$ is the total cost of the first $n \ge 1$ switches
		\begin{equation*}\label{eq:Optimal-Switching-CumulativeSwitchingCost}
		C^{\alpha}_{n} \coloneqq \sum\limits_{k = 1}^{n}\gamma_{\iota_{k-1},\iota_{k}}(\tau_{k})\mathbf{1}_{\{\tau_{k} < T\}}
		\end{equation*}
		satisfies
		\begin{equation}\label{eq:Optimal-Switching-UISupremumClosureSwitchingCosts}
		\mathsf{E}\big[\sup_{n}\big|C^{\alpha}_{n}\big|\big] < \infty.
		\end{equation}
	\end{enumerate}
	Let $\mathcal{A}_{t,i}$ denote the set of admissible switching control strategies (henceforth, just strategies). We write $\mathcal{A}_{i}$ when $t = 0$ and drop the subscript $i$ if it is not important for the discussion.
\end{definition}

\begin{remark}\label{Remark:Optimal-Switching-SubscriptModeIndicators}
	Processes or functions with super(sub)-scripts in terms of the random mode indicators $\iota_{n}$ are interpreted in the following way:
	\begin{align*}
	Y^{\iota_{n}} & = \sum\limits_{j \in \mathbb{I}}\mathbf{1}_{\lbrace \iota_{n} = j \rbrace} Y^{j}, \quad n\ge 0 \\
	\gamma_{\iota_{n-1},\iota_{n}}\left(\cdot\right) & = \sum\limits_{j \in \mathbb{I}}\sum\limits_{k \in \mathbb{I}}\mathbf{1}_{\lbrace \iota_{n-1} = j \rbrace}\mathbf{1}_{\lbrace \iota_{n} = k \rbrace} \gamma_{j,k}\left(\cdot\right), \quad n \ge 1.
	\end{align*}
	Note that the summations are finite.
\end{remark}

We shall frequently use the notation $N(\alpha)$ to denote the (random) number of switches before $T$ under strategy $\alpha$:
\begin{equation}\label{eq:Optimal-Switching-RandomNumberOfSwitches}
N(\alpha) = \sum_{n \ge 1}\mathbf{1}_{\{\tau_{n} < T\}}, \quad \alpha \in \mathcal{A}.
\end{equation}
Associated with each strategy $\alpha \in \mathcal{A}$ is a mode indicator function $\mathbf{u} \colon \Omega \times [0,T] \to \mathbb{I}$ that gives the active mode at each time \cite{Djehiche2009,Guo2008b}:
\begin{equation}\label{eq:Optimal-Switching-ModeIndicatorContinuousTime}
\mathbf{u}_{t} \coloneqq \iota_{0}\mathbf{1}_{[\tau_{0},\tau_{1}]}(t) + \sum\limits_{n \ge 1}\iota_{n}\mathbf{1}_{(\tau_{n},\tau_{n+1}]}(t),\hspace{1em} t \in [0,T].
\end{equation}

For a fixed time $t \in [0,T]$ and given mode $i \in \mathbb{I}$, the performance index for the optimal switching problem starting at $t$ in mode $i$ is given by:
\begin{equation}\label{eq:Optimal-Switching-DynamicPerformanceIndex}
J(\alpha;t,i) = \mathsf{E}\left[\int_{t}^{T}\psi_{\mathbf{u}_{s}}(s){d}s + \Gamma_{\mathbf{u}_{T}} - \sum_{n \ge 1}\gamma_{\iota_{n-1},\iota_{n}}(\tau_{n})\mathbf{1}_{\{ \tau_{n} < T \}} \biggm \vert \mathcal{F}_{t}\right],\hspace{1em} \alpha \in \mathcal{A}_{t,i}.
\end{equation}
The goal is to find a strategy $\alpha^{*} \in \mathcal{A}_{t,i}$ that maximises the performance index:
\begin{equation}\label{eq:Optimal-Switching-ValueFunction}
J(\alpha^{*};t,i) = \esssup\limits_{\alpha \in \mathcal{A}_{t,i}}J\left(\alpha;t,i\right) \eqqcolon V(t,i).
\end{equation}
The random function $V(t,i)$ is called the \emph{value function} for the optimal switching problem.

\begin{remark}\label{Note:IntegrabilityConcerns}
	For $\alpha \in \mathcal{A}$, define $C^{\alpha}$ to be the total switching cost under $\alpha$:
	\[
	C^{\alpha} \coloneqq \sum\limits_{n \ge 1}\gamma_{\iota_{n-1},\iota_{n}}(\tau_{n})\mathbf{1}_{\{\tau_{n} < T\}}
	\]
	By the finiteness condition~\eqref{eq:Optimal-Switching-FiniteStrategy}, we have $\forall \alpha \in \mathcal{A}$:
	\[
	C^{\alpha} = \lim_{n \to \infty} C^{\alpha}_{n} \enskip \mathsf{P}-\text{a.s.}
	\]
	Furthermore, by using condition \eqref{eq:Optimal-Switching-UISupremumClosureSwitchingCosts}, we have the following dominated convergence property:
	\begin{equation}\label{eq:Optimal-Switching-ConvergenceOfSwitchingCost}
	\forall \alpha \in \mathcal{A}: \quad \lim_{n \to \infty}\mathsf{E}\left[C^{\alpha}_{n}\vert \mathcal{B}\right] = \mathsf{E}\left[C^{\alpha}\vert \mathcal{B}\right] \hspace{1em} \text{a.s. for every } \sigma\text{-algebra} \hspace{5bp} \mathcal{B} \subset \mathcal{F}.
	\end{equation}
\end{remark}

\section{Preliminaries.}\label{Section:Optimal-Switching-Preliminaries}
\subsection{Some results from the general theory of stochastic processes.}
We need to recall a few results from the general theory of stochastic processes that are essential to this paper. For more details the reader is kindly referred to the references \cite{Elliott1982,Jacod2003,Rogers2000b}.
\subsubsection{Right-continuous with left-limits processes.}
An adapted process $X = \left(X_{t}\right)_{0 \le t \le T}$ is said to be c\`{a}dl\`{a}g if it is right-continuous and admits left limits. The left-limits process associated with a c\`{a}dl\`{a}g process $X$ is denoted by $X_{-} = \left(X_{t^{-}}\right)_{0 < t \le T}$ (here we follow the convention of \cite{Rogers2000b}). We also define the process $\triangle X$ by $\triangle X \coloneqq X - X_{-}$ and let $\triangle_{t}X \coloneqq X_{t} - X_{t^{-}}$ denote the size of the jump in $X$ at $t \in (0,T]$.
\subsubsection{Predictable random times.}
A random time $S$ is an $\mathcal{F}$-measurable mapping $S \colon \Omega \to [0,T]$. For two random times $\rho$ and $\tau$, the stochastic interval $[\rho,\tau]$ is defined as:
\[
[\rho,\tau] = \left\lbrace (\omega,t) \in \Omega \times [0,T] \colon \rho(\omega) \le t \le \tau(\omega)  \right\rbrace.
\]
Stochastic intervals $(\rho,\tau]$, $[\rho,\tau)$, $(\rho,\tau)$ are defined analogously. A random time $S > 0$ is said to be predictable if the stochastic interval $[0,S)$ is measurable with respect to the predictable $\sigma$-algebra (the $\sigma$-algebra on $\Omega \times (0,T]$ generated by the adapted processes with paths that are left-continuous with right-limits on $(0,T]$). Note that every predictable time is a stopping time \cite[p.~17]{Jacod2003}. By Meyer's previsibility (predictability) theorem (\cite{Rogers2000b}, Theorem~\MakeUppercase{\romannumeral 6}.12.6), a stopping time $S > 0$ is predictable if and only if it is announceable in the following sense: there exists a sequence of stopping times $\lbrace S_{n} \rbrace_{n \ge 0}$ satisfying $S_{n}(\omega) \le S_{n+1}(\omega) < S(\omega)$ for all $n$ and $\lim_{n}S_{n}(\omega) = S(\omega)$.

\subsubsection{Quasi-left-continuous processes and filtrations.}
A c\`{a}dl\`{a}g process $X$ is called quasi-left-continuous if $\triangle_{S} X = 0$ a.s. for every predictable time $S$ (Definition \MakeUppercase{\romannumeral 1}.2.25 of \cite{Jacod2003}). The strict pre-$S$ $\sigma$-algebra associated with a random time $S > 0$,  $\mathcal{F}_{S^{-}}$, is defined as \cite[p.~345]{Rogers2000b}:
\[
\mathcal{F}_{S^{-}} = \sigma\left(\lbrace A \cap \lbrace S > u \rbrace \colon 0 \le u \le T, A \in \mathcal{F}_{u}  \rbrace\right).
\]
According to \cite[p.~346]{Rogers2000b}, a filtration $\mathbb{F} = (\mathcal{F}_{t})_{0 \le t \le T}$ which satisfies the usual conditions is said to be \emph{quasi-left-continuous} if $\mathcal{F}_{S} = \mathcal{F}_{S^{-}}$ for every predictable time $S$. We have the following equivalence result for quasi-left-continuous filtrations (see \cite{Rogers2000b}, Theorem \MakeUppercase{\romannumeral 6}.18.1 and \cite{Elliott1982}, Theorem 5.36).
\begin{proposition}[Characterisation of quasi-left-continuous filtrations]\label{Proposition:Optimal-Switching-qlcFiltrations}
	The following statements are equivalent:
	\begin{enumerate}
		\item $\mathbb{F}$ satisfies the usual conditions (right-continuous and $\mathsf{P}$-complete) and is quasi-left-continuous;
		\item For every bounded (and then for every uniformly integrable) c\`{a}dl\`{a}g martingale $M$ and every predictable time $S$, we have $\triangle_{S} M = 0$ a.s.;
		\item For every increasing sequence of stopping times $\lbrace S_{n} \rbrace$ with limit, $\lim_{n}S_{n} = S$, we have
		\[
		\mathcal{F}_{S} = \bigvee_{n} \mathcal{F}_{S_{n}}.
		\]
	\end{enumerate}
\end{proposition}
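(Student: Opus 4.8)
The plan is to route all three statements through the behaviour of uniformly integrable c\`{a}dl\`{a}g martingales, since condition (2) is phrased directly in those terms while both (1) and (3) can be translated into statements about conditional expectations along stopping times. Throughout I would retain the standing assumption that $\mathbb{F}$ satisfies the usual conditions, so that every uniformly integrable martingale admits a c\`{a}dl\`{a}g modification and optional sampling is available; the real content is the equivalence of the three quasi-left-continuity characterisations. The single ingredient I would isolate first is the following lemma: if $S > 0$ is predictable with announcing sequence $S_{n} \uparrow S$, $S_{n} < S$, then $\bigvee_{n}\mathcal{F}_{S_{n}} = \mathcal{F}_{S^{-}}$, and for bounded $\xi$ the c\`{a}dl\`{a}g martingale $M_{t} = \mathsf{E}[\xi \mid \mathcal{F}_{t}]$ satisfies $M_{S^{-}} = \mathsf{E}[\xi \mid \mathcal{F}_{S^{-}}]$. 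The first identity I would prove directly from the definition of the strict pre-$S$ $\sigma$-algebra recalled above, checking that each generator $A \cap \{S > u\}$ with $A \in \mathcal{F}_{u}$ lies in $\bigvee_{n}\mathcal{F}_{S_{n}}$ and conversely; the second then follows by writing $M_{S^{-}} = \lim_{n} M_{S_{n}} = \lim_{n}\mathsf{E}[\xi \mid \mathcal{F}_{S_{n}}]$ and invoking L\'{e}vy's upward convergence theorem to identify the limit with $\mathsf{E}[\xi \mid \bigvee_{n}\mathcal{F}_{S_{n}}]$.

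Granting the lemma, the equivalence (1) $\Leftrightarrow$ (2) I would obtain from the jump formula $\triangle_{S} M = M_{S} - M_{S^{-}} = \mathsf{E}[\xi \mid \mathcal{F}_{S}] - \mathsf{E}[\xi \mid \mathcal{F}_{S^{-}}]$, valid at a predictable time $S$ by optional sampling together with the lemma. If $\mathcal{F}_{S} = \mathcal{F}_{S^{-}}$ the right-hand side vanishes, giving (2); it suffices to treat bounded $M$ and then pass to the uniformly integrable case by $L^{1}$-approximation of the terminal variable. Conversely, if $\triangle_{S} M = 0$ for every bounded martingale, then $\mathsf{E}[\xi \mid \mathcal{F}_{S}] = \mathsf{E}[\xi \mid \mathcal{F}_{S^{-}}]$ for every bounded $\xi$, and since $\mathcal{F}_{S^{-}} \subseteq \mathcal{F}_{S}$ always holds, taking $\xi = \mathbf{1}_{B}$ for $B \in \mathcal{F}_{S}$ forces $B \in \mathcal{F}_{S^{-}}$, so $\mathcal{F}_{S} = \mathcal{F}_{S^{-}}$. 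As every predictable time is announceable by Meyer's theorem (recalled above), this covers all predictable $S$.

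For (2) $\Leftrightarrow$ (3) I would argue along a general increasing sequence $S_{n} \uparrow S$. Splitting on the event $A = \{S_{n} < S \text{ for all } n\}$ and its complement, I would note that $\lim_{n} M_{S_{n}} = M_{S}$ on $A^{c}$ (where the sequence is eventually equal to $S$) and $\lim_{n} M_{S_{n}} = M_{S^{-}}$ on $A$, so that $M_{S} - \lim_{n} M_{S_{n}} = \triangle_{S} M \cdot \mathbf{1}_{A}$. The time equal to $S$ on $A$ and to a value exceeding $T$ off $A$ is predictable, being announced by $\{S_{n}\}$ on $A$; hence under (2) its jump vanishes and $M_{S_{n}} \to M_{S}$ for every bounded martingale. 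Translating via $M_{S_{n}} = \mathsf{E}[\xi \mid \mathcal{F}_{S_{n}}] \to \mathsf{E}[\xi \mid \bigvee_{n}\mathcal{F}_{S_{n}}]$ (L\'{e}vy) and $M_{S} = \mathsf{E}[\xi \mid \mathcal{F}_{S}]$, convergence for all bounded $\xi$ yields $\mathcal{F}_{S} = \bigvee_{n}\mathcal{F}_{S_{n}}$, which is (3). The reverse implication is immediate: specialising (3) to a predictable $S$ and its announcing sequence gives $\mathcal{F}_{S} = \bigvee_{n}\mathcal{F}_{S_{n}} = \mathcal{F}_{S^{-}}$, that is (1), whence (2) by the previous step.

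The hard part will be the lemma, specifically the identity $\bigvee_{n}\mathcal{F}_{S_{n}} = \mathcal{F}_{S^{-}}$ and the attendant measurability bookkeeping: one must handle the generators of the strict pre-$S$ $\sigma$-algebra carefully, and in the (2) $\Leftrightarrow$ (3) step verify that the restriction of $S$ to the set $A$ really is predictable rather than merely a stopping time. The decomposition of $\lim_{n} M_{S_{n}}$ into its values on $A$ and $A^{c}$, and the clean identification of limits of conditional expectations with conditioning on a generated $\sigma$-algebra via the upward convergence theorem, are the points where the quasi-left-continuity hypothesis is actually consumed; everything else reduces to routine optional-sampling manipulation.
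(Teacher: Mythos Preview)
The paper does not give its own proof of this proposition; it is stated as a known equivalence with references to Rogers--Williams (Theorem~VI.18.1) and Elliott (Theorem~5.36), so there is no in-text argument to compare against. Your proposal reproduces what is essentially the standard textbook proof found in those references: the lemma $\bigvee_{n}\mathcal{F}_{S_{n}} = \mathcal{F}_{S^{-}}$ for an announcing sequence together with L\'{e}vy's upward theorem gives $M_{S^{-}} = \mathsf{E}[\xi \mid \mathcal{F}_{S^{-}}]$, and the equivalences then amount to translating between $\sigma$-algebra identities and vanishing martingale jumps via optional sampling. The one step that needs to be written out carefully --- and which you correctly flag --- is the verification in (2) $\Rightarrow$ (3) that the restriction of $S$ to $A = \bigcap_{n}\{S_{n} < S\}$ is a genuine predictable time, not merely a stopping time; this follows once one observes that each $\{S_{n} < S\} \in \mathcal{F}_{S_{n}}$, so $A \in \bigvee_{n}\mathcal{F}_{S_{n}}$, and then builds a global announcing sequence from the $S_{n}$ modified off $A$. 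With that detail supplied your argument is complete and matches the cited sources.
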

\subsection{Some notation.}\label{Section:Continuous-Time-Optimal-Switching-Notation}
Let us now define some notation that is frequently used below.
\begin{enumerate}
	\item For $1 \le p < \infty$, we write $L^{p}$ to denote the set of random variables $Z$ satisfying $\mathsf{E}\left[|Z|^{p}\right] < \infty$. 
	\item Let $\mathcal{Q}$ denote the set of adapted, c\`{a}dl\`{a}g processes which are quasi-left-continuous.
	\item Let $\mathcal{M}^{2}$ denote the set of progressively measurable processes $X = \left(X_{t}\right)_{0 \le t \le T}$ satisfying,
	\[
	\mathsf{E}\left[\int_{0}^{T}|X_{t}|^{2}{d}t\right] < \infty.
	\]
	\item Let $\mathcal{S}^{2}$ denote the set of adapted, c\`{a}dl\`{a}g processes $X$ satisfying:
	
	\[
	\mathsf{E}\left[\left(\sup\nolimits_{0 \le t \le T}\left|X_{t}\right|\right)^{2}\right] < \infty.
	\]
\end{enumerate}
\subsection{Properties of Snell envelopes.}\label{Section:Optimal-Switching-SnellEnvelopes}
The following properties of Snell envelopes are also essential for our results. Recall that a progressively measurable process $X$ is said to belong to class $[D]$ if the set of random variables $\lbrace X_{\tau}, \tau \in \mathcal{T} \rbrace$ is uniformly integrable.

\begin{proposition}\label{Proposition:Optimal-Switching-SnellEnvelopeProperties}
	Let $U = (U_{t})_{0 \le t \le T}$ be an adapted, $\mathbb{R}$-valued, c\`{a}dl\`{a}g process that belongs to class $[D]$. Then there exists a unique (up to indistinguishability), adapted $\mathbb{R}$-valued c\`{a}dl\`{a}g process $Z = (Z_{t})_{0 \le t \le T}$ such that $Z$ is the smallest supermartingale which dominates $U$. The process $Z$ is called the Snell envelope of $U$ and it enjoys the following properties.
	\begin{enumerate}
		\item For any stopping time $\theta$ we have:
		\begin{equation}
		Z_{\theta} = \esssup_{\tau \in \mathcal{T}_{\theta}}\mathsf{E}\left[U_{\tau} \vert \mathcal{F}_{\theta}\right],\text{ and therefore }Z_{T} = U_{T}.
		\end{equation}
		\item Meyer decomposition: There exist a uniformly integrable right-continuous martingale $M$ and two non-decreasing, adapted, predictable and integrable processes $A$ and $B$, with $A$ continuous and $B$ purely discontinuous, such that for all $0 \le t \le T$,
		\begin{equation}\label{SnellEnvelope:MeyerDecomposition}
		Z_{t} = M_{t} - A_{t} - B_{t}, \hspace{1em} A_{0} = B_{0} = 0.
		\end{equation}
		Furthermore, the jumps of $B$ satisfy $\lbrace \triangle B > 0 \rbrace \subset \lbrace Z_{-} = U_{-} \rbrace$.
		\item Let a stopping time $\theta$ be given and let $\{\tau_{n}\}_{n \ge 0}$ be an increasing sequence of stopping times tending to a limit $\tau$ such that each $\tau_{n} \in \mathcal{T}_{\theta}$ and satisfies $\mathsf{E}\left[U^{-}_{\tau_{n}}\right] < \infty$. Suppose the following condition is satisfied for any such sequence,
		\begin{equation}\label{eq:Optimal-Switching-LeftUpperSemicontinuity}
		\limsup_{n \to \infty} U_{\tau_{n}} \le U_{\tau}
		\end{equation}
		Then the stopping time $\tau^{*}_{\theta}$ defined by
		\begin{equation}\label{eq:Optimal-Switching-DebutTime}
		\tau^{*}_{\theta} = \inf\lbrace t \ge \theta \colon Z_{t} = U_{t} \rbrace \wedge T
		\end{equation}
		is optimal after $\theta$ in the sense that:
		\begin{equation}\label{eq:Optimal-Switching-OptimalityOfDebutTime}
		Z_{\theta} = \mathsf{E}\left[Z_{\tau^{*}_{\theta}} \vert \mathcal{F}_{\theta}\right] = \mathsf{E}\left[U_{\tau^{*}_{\theta}} \vert \mathcal{F}_{\theta}\right] = \esssup_{\tau \in \mathcal{T}_{\theta}}\mathsf{E}\left[U_{\tau} \vert \mathcal{F}_{\theta}\right].
		\end{equation}
		\item For every $\theta \in \mathcal{T}$, if $\tau^{*}_{\theta}$ is the stopping time defined in equation~\eqref{eq:Optimal-Switching-DebutTime}, then the stopped process $\left(Z_{t \wedge \tau^{*}_{\theta}}\right)_{\theta \le t \le T}$ is a (uniformly integrable) c\`{a}dl\`{a}g martingale.
		\item Let $\lbrace U^{n} \rbrace_{n \ge 0}$ and $U$ be adapted, c\`{a}dl\`{a}g and of class $[D]$ and let $Z^{U^{n}}$ and $Z$ denote the Snell envelopes of $U^{n}$ and $U$ respectively. If the sequence $\lbrace U^{n} \rbrace_{n \ge 0}$ is increasing and converges pointwise to $U$, then the sequence $\lbrace Z^{U^{n}} \rbrace_{n \ge 0}$ is also increasing and converges pointwise to $Z$. Furthermore, if $U \in \mathcal{S}^{2}$ then $Z \in \mathcal{S}^{2}$.
	\end{enumerate}
\end{proposition}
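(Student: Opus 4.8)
The plan is to assemble these five statements from the classical theory of optimal stopping for càdlàg obstacles of class $[D]$, supplying the arguments that use the specific hypotheses. For part (1), I would define the family $\Lambda_\theta := \esssup_{\tau \in \mathcal{T}_\theta} \mathsf{E}[U_\tau \mid \mathcal{F}_\theta]$, indexed by $\theta \in \mathcal{T}$, and verify that it is a supermartingale system: the lattice (upward-directed) property of $\{\mathsf{E}[U_\tau \mid \mathcal{F}_\theta]\}$ furnishes an approximating increasing sequence, and the optional sampling theorem yields the consistency $\mathsf{E}[\Lambda_\theta \mid \mathcal{F}_\sigma] \le \Lambda_\sigma$ for $\sigma \le \theta$. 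Since $U$ is of class $[D]$, the system is uniformly integrable and $\theta \mapsto \mathsf{E}[\Lambda_\theta]$ is right-continuous, so the regularisation theorem for supermartingales under the usual conditions produces a càdlàg aggregator $Z$ with $Z_\theta = \Lambda_\theta$ a.s. Minimality and uniqueness are then immediate: any càdlàg supermartingale $Z'$ dominating $U$ satisfies $Z'_\theta \ge \mathsf{E}[U_\tau \mid \mathcal{F}_\theta]$ for every $\tau \in \mathcal{T}_\theta$ by optional sampling, hence $Z' \ge Z$.

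For part (2), I would apply the Doob--Meyer decomposition to the class $[D]$ supermartingale $Z$ to obtain $Z = M - A$ with $M$ a uniformly integrable martingale and $A$ predictable, non-decreasing and integrable, and then split $A = A^{c} + B$ into its continuous and purely discontinuous predictable parts, relabelling $A^{c}$ as $A$ to match \eqref{SnellEnvelope:MeyerDecomposition}. The inclusion $\{\triangle B > 0\} \subset \{Z_{-} = U_{-}\}$ is the delicate point; I would establish it by a minimality argument: if $B$ were to charge the predictable set $\{Z_- > U_-\}$, one could "fill in" that downward predictable jump to construct a strictly smaller supermartingale still dominating $U$, contradicting the characterisation of $Z$ from part (1).

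Parts (3) and (4) I would treat together. From the jump inclusion of part (2) and its continuous-part analogue $\{dA > 0\} \subset \{Z = U\}$, neither $A$ nor $B$ increases while the envelope stays strictly above the obstacle; hence on $[\theta, \tau^{*}_{\theta})$, where the definition \eqref{eq:Optimal-Switching-DebutTime} forces $Z > U$ (resp. $Z_- > U_-$), the processes $A$ and $B$ are constant, so $(Z_{t \wedge \tau^{*}_{\theta}})_{\theta \le t \le T}$ reduces to a stopped uniformly integrable martingale, which is part (4). For the optimality claim it then remains only to prove $Z_{\tau^{*}_{\theta}} = U_{\tau^{*}_{\theta}}$ a.s.: on $\{\tau^{*}_{\theta} = T\}$ this is $Z_T = U_T$ from part (1), while on $\{\tau^{*}_{\theta} < T\}$ right-continuity handles a contact set entered from the right, and the left-upper-semicontinuity hypothesis \eqref{eq:Optimal-Switching-LeftUpperSemicontinuity} controls the remaining predictable approach. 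Concretely, applying \eqref{eq:Optimal-Switching-LeftUpperSemicontinuity} to an announcing sequence $\tau_n \uparrow \tau^{*}_{\theta}$ gives $\limsup_n U_{\tau_n} \le U_{\tau^{*}_{\theta}}$, which upgrades the limiting contact of $Z$ with $U$ into the pointwise identity; combined with $Z \ge U$, the chain \eqref{eq:Optimal-Switching-OptimalityOfDebutTime} then follows from the martingale property and part (1).

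For part (5) I would first record the monotonicity of the Snell envelope in its obstacle: since $U^n \le U^{n+1} \le U$, part (1) gives $Z^{U^n} \le Z^{U^{n+1}} \le Z$, so the increasing limit $\bar{Z} := \lim_n Z^{U^n}$ exists with $\bar{Z} \le Z$. Passing to the limit by monotone convergence for conditional expectations shows $\bar{Z}$ is a supermartingale, and $\bar{Z} \ge \lim_n U^n = U$ gives domination; minimality of $Z$ then forces $\bar{Z} \ge Z$, so $\bar{Z} = Z$ at every $t$ (the raw limit, though not a priori càdlàg, agrees with the càdlàg process $Z$). If moreover $U \in \mathcal{S}^2$, part (1) yields the pointwise bound $|Z_t| \le \mathsf{E}[\sup_{0 \le s \le T}|U_s| \mid \mathcal{F}_t]$, and Doob's $L^2$ maximal inequality applied to the martingale on the right gives $\mathsf{E}[(\sup_t |Z_t|)^2] < \infty$, i.e.\ $Z \in \mathcal{S}^2$. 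The main obstacle throughout is the interplay of parts (2) and (3): proving the jump localisation $\{\triangle B > 0\} \subset \{Z_- = U_-\}$ and then deducing $Z_{\tau^{*}_{\theta}} = U_{\tau^{*}_{\theta}}$ for a \emph{merely} càdlàg obstacle, where the class $[D]$ property and the condition \eqref{eq:Optimal-Switching-LeftUpperSemicontinuity} are indispensable because $U$ may jump precisely at the contact time.
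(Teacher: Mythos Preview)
The paper does not supply its own proof of this proposition: immediately after the statement it writes ``References for these properties can be found in the appendix of \cite{Hamadene2002} and other references such as \cite{ElKaroui1981,Morimoto1982,Peskir2006}. Proof of the fifth property can be found in Proposition~2 of \cite{Djehiche2009}.'' So there is no argument in the paper to compare against; your sketch is precisely the classical development one finds in those sources, and in that sense it is aligned with what the paper intends.

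Your outline is sound, but two places deserve a word of caution if you flesh it out. First, in parts (3)--(4) you assert that on $[\theta,\tau^{*}_{\theta})$ the definition forces $Z_{-}>U_{-}$, and hence $B$ cannot jump there. Strict inequality of $Z$ and $U$ on $[\theta,\tau^{*}_{\theta})$ only gives $Z_{s-}\ge U_{s-}$ at interior points, not strict inequality; the standard route (as in \cite{ElKaroui1981} or \cite{Hamadene2002}) avoids this by working with the $\varepsilon$-contact times $\sigma_{\varepsilon}=\inf\{t\ge\theta: Z_{t}\le U_{t}+\varepsilon\}$, proving the martingale property on $[\theta,\sigma_{\varepsilon}]$, and then letting $\varepsilon\downarrow 0$ using the left-upper-semicontinuity hypothesis~\eqref{eq:Optimal-Switching-LeftUpperSemicontinuity} to recover both $Z_{\tau^{*}_{\theta}}=U_{\tau^{*}_{\theta}}$ and the martingale property up to $\tau^{*}_{\theta}$. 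Second, the ``continuous-part analogue'' $\{dA>0\}\subset\{Z=U\}$ you invoke is true but is itself a nontrivial consequence of the minimality of $Z$ (it is proved in the same references); it is worth flagging as a separate lemma rather than a parenthetical. With those two refinements your argument matches the cited literature.
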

References for these properties can be found in the appendix of \cite{Hamadene2002} and other references such as \cite{ElKaroui1981,Morimoto1982,Peskir2006}. Proof of the fifth property can be found in Proposition 2 of \cite{Djehiche2009}. We also have the following result concerning integrability of the components in the Doob-Meyer decomposition.
\begin{proposition}\label{Proposition:Optimal-Switching-SquareIntegrableMartingale}
	For $0 \le t \le T$, let $Z_{t} = M_{t} - A_{t}$ where
	\begin{enumerate}
		\item the process $Z = (Z_{t})_{0 \le t \le T}$ is in $\mathcal{S}^{2}$;
		\item the process $M = (M_{t})_{0 \le t \le T}$ is a c\`{a}dl\`{a}g, quasi-left-continuous martingale with respect to $\mathbb{F}$;
		\item the process $A = (A_{t})_{0 \le t \le T}$ is an $\mathbb{F}$-adapted c\`{a}dl\`{a}g increasing process.
	\end{enumerate}
	Then $A$ (and therefore $M$) is also in $\mathcal{S}^{2}$.
\end{proposition}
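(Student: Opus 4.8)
The plan is to reduce everything to showing $\mathsf{E}[(A_T)^2]<\infty$. Since $M$ is a martingale and $A$ is increasing, $Z$ is a supermartingale, and as $Z\in\mathcal{S}^2$ it is of class $[D]$ with $Z_T\in L^2$; hence $A_T=M_T-Z_T\in L^1$, so $A$ is an integrable increasing process and admits a dual predictable projection (compensator) $A^p$, with $A=A^p+N$ for the purely discontinuous martingale $N=A-A^p$. Writing $Z=(M-N)-A^p$ exhibits the Doob--Meyer decomposition of $Z$, so that $A^p$ is exactly the predictable increasing process associated with $Z$. I may assume $A_0=0$ (replace $(M,A,Z)$ by $(M-M_0,A-A_0,Z-Z_0)$; in the present setting $\mathcal{F}_0$ is trivial, so $A_0$ is deterministic). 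As $A$ is increasing with $A_0=0$ we have $\sup_t|A_t|=A_T$, whence $A_T\in L^2$ yields $A\in\mathcal{S}^2$ and then $M=Z+A\in\mathcal{S}^2$. Throughout write $Z^{\ast}=\sup_{0\le t\le T}|Z_t|\in L^2$ and $\Phi_s=Z_{s^-}-\mathsf{E}[Z_T\mid\mathcal{F}_{s^-}]$, which is non-negative and satisfies $\|\sup_s\Phi_s\|_2\le 3\|Z^{\ast}\|_2$ by Doob's $L^2$ inequality applied to the martingale $\mathsf{E}[Z_T\mid\mathcal{F}_{\cdot}]$.

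First I would control the compensator. The classical energy inequality for the predictable increasing process of a class-$[D]$ supermartingale gives $A^p\in\mathcal{S}^2$: starting from the elementary bound $(A^p_T)^2\le 2\int_{(0,T]}(A^p_T-A^p_{s^-})\,dA^p_s$ and using that $A^p$ is predictable, one replaces $A^p_T$ by $\mathsf{E}[A^p_T\mid\mathcal{F}_{s^-}]$ under the integral; with $A^p_T=(M-N)_T-Z_T$ the integrand collapses to $\Phi_s$, yielding $\mathsf{E}[(A^p_T)^2]\le 2\,\mathsf{E}\big[\int_{(0,T]}\Phi_s\,dA^p_s\big]\le 2\,\|\sup_s\Phi_s\|_2\,\|A^p_T\|_2$. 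A standard localisation along $\sigma_k=\inf\{t:A^p_t\ge k\}$ removes the a priori finiteness of $\|A^p_T\|_2$ and produces the bound $\|A^p_T\|_2\le 2\|\sup_s\Phi_s\|_2\le 6\|Z^{\ast}\|_2<\infty$.

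The main step is to transfer this to the raw increasing process $A$, whose lack of predictability is the crux. Integration by parts gives the deterministic inequality $(A_T)^2\le 2\int_{(0,T]}(A_T-A_{s^-})\,dA_s$, but here one cannot simply condition $A_T$ inside the integral, since $A$ is only adapted. Instead I would invoke the defining property of the dual predictable projection: for the non-negative integrand $H_s=A_T-A_{s^-}$,
\[
\mathsf{E}\Big[\int_{(0,T]}H_s\,dA_s\Big]=\mathsf{E}\Big[\int_{(0,T]}{}^{p}H_s\,dA^p_s\Big],
\]
where ${}^{p}H$ is the predictable projection of $H$. Since $s\mapsto A_{s^-}$ is predictable and ${}^{p}(A_T)_s=\mathsf{E}[A_T\mid\mathcal{F}_{s^-}]$, and since $A_T=M_T-Z_T$ with $\mathsf{E}[M_T\mid\mathcal{F}_{s^-}]=M_{s^-}$ (the predictable projection of the martingale $M$), the projected integrand again collapses: ${}^{p}H_s=\mathsf{E}[A_T\mid\mathcal{F}_{s^-}]-A_{s^-}=Z_{s^-}-\mathsf{E}[Z_T\mid\mathcal{F}_{s^-}]=\Phi_s$. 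Hence
\[
\mathsf{E}[(A_T)^2]\le 2\,\mathsf{E}\Big[\int_{(0,T]}\Phi_s\,dA^p_s\Big]\le 2\,\|\sup_s\Phi_s\|_2\,\|A^p_T\|_2,
\]
which is finite by the previous paragraph. Thus $A_T\in L^2$, so $A\in\mathcal{S}^2$ and $M=Z+A\in\mathcal{S}^2$.

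I expect the main obstacle to be precisely this transfer from $A^p$ to $A$: the naive attempt to condition $A_T$ under $\int\cdot\,dA_s$ fails because $A$ is not predictable, and the resolution is the dual-predictable-projection identity, which simultaneously turns the integrator into $A^p$ (already controlled in $\mathcal{S}^2$) and the non-adapted terminal term $A_T$ into $\mathsf{E}[A_T\mid\mathcal{F}_{\cdot^-}]$. The pleasant point, making the two estimates uniform, is that in both cases the projected integrand reduces to the same intrinsic quantity $\Phi$, for which the relation $Z=M-A$ and the martingale property of $M$ are essential; quasi-left-continuity, which in our setting holds for every martingale and in particular keeps $M-N$ a quasi-left-continuous martingale, guarantees the compatibility of this decomposition with the Meyer decomposition used elsewhere, though the $L^2$ estimate itself needs only the predictable projection ${}^{p}M=M_-$. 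The remaining technical point is the localisation in the energy inequality, which is routine.
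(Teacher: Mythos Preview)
Your first step---the energy inequality bounding $\|A^p_T\|_2$ in terms of $\|Z^\ast\|_2$---is correct and is exactly the integration-by-parts argument the paper invokes (via Hamad\`ene), applied to the Doob--Meyer decomposition $Z=(M-N)-A^p$ of the class-$[D]$ supermartingale $Z$. If $A$ were predictable this would already finish, and in fact that is all the paper needs: in the only place the proposition is used (the martingales $M^i$ in the appendix), the increasing process is the \emph{continuous} compensator of the regular Snell envelope, so $A=A^p$ and the cited Hamad\`ene argument applies verbatim.

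The gap is in your transfer from $A^p$ to the raw increasing process $A$. The identity you invoke,
\[
\mathsf{E}\Big[\int_{(0,T]} H_s\,dA_s\Big]=\mathsf{E}\Big[\int_{(0,T]} {}^{p}H_s\,dA^{p}_s\Big],
\]
is \emph{not} the defining property of the dual predictable projection and is false in general. The correct statements are: (i) $\mathsf{E}[\int K\,dA]=\mathsf{E}[\int K\,dA^p]$ for \emph{predictable} $K$; and (ii) $\mathsf{E}[\int H\,dB]=\mathsf{E}[\int {}^{p}H\,dB]$ for \emph{predictable} integrators $B$. Together these yield $\mathsf{E}[\int {}^{p}H\,dA]=\mathsf{E}[\int {}^{p}H\,dA^p]$, but say nothing about $\mathsf{E}[\int H\,dA]$ when $A$ is merely optional. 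If instead you pass through the optional projection (which is what governs integration against the optional measure $dA$), the integrand becomes $M_s-\mathsf{E}[Z_T\mid\mathcal{F}_s]-A_{s^-}=\Delta M_s+\Phi_s'$, and the extra term produces $\mathsf{E}\big[\sum_s \Delta M_s\,\Delta A_s\big]$, which need not vanish or be controllable.

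This is not merely a technicality: under the hypotheses as stated the conclusion can fail. Take a quasi-left-continuous filtration generated by a single jump at a totally inaccessible time $\tau$ with size $J\ge 0$, $J\in L^1\setminus L^2$, independent of $\tau$. Set $A_t=J\mathbf{1}_{\{t\ge\tau\}}$ (adapted, c\`adl\`ag, increasing), let $A^p_t=\mathsf{E}[J]\,\lambda(t\wedge\tau)$ be its (bounded) compensator, and put $M=A-A^p$ (a quasi-left-continuous martingale) and $Z=-A^p\in\mathcal{S}^2$. Then $Z=M-A$ satisfies all three hypotheses, yet $\mathsf{E}[A_T^2]=\mathsf{E}[J^2]\,\mathsf{P}(\tau\le T)=\infty$. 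In this example your ``transfer'' identity reads $\infty=\mathsf{E}[\int H\,dA]$ on the left versus a finite quantity on the right. So the step where you replace $dA$ by $dA^p$ and $H$ by ${}^{p}H$ simultaneously is precisely where the argument breaks; the paper's brief proof avoids this because, in its intended application, $A$ is already predictable (indeed continuous).
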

\begin{proof}
The proof essentially uses an integration by parts formula on $(A_{T})^{2}$ and the decomposition $Z = M - A$ in the hypothesis. See Proposition~A.5 of \cite{Hamadene2002} for further details, noting that the same proof works for quasi-left-continuous $M$.
\end{proof}
\section{Assumptions}\label{Section:Optimal-Switching-Assumptions}

\begin{assumption}\label{Assumption:QLCFiltration}
	The filtration $\mathbb{F}$ satisfies the usual conditions of right-continuity and $\mathsf{P}$-completeness and is also quasi-left-continuous.
\end{assumption}

\begin{assumption}\label{Assumption:ProfitandCost}
	For every $i,j \in \mathbb{I}$ we suppose:
	\begin{enumerate}
		\item the instantaneous profit satisfies $\psi_{i} \in \mathcal{M}^{2}$;
		\item the switching cost satisfies $\gamma_{i,j} \in \mathcal{Q} \cap \mathcal{S}^{2}$.
		\item the terminal data $\Gamma_{i} \in L^{2}$ and is $\mathcal{F}_{T}$-measurable.
	\end{enumerate}
\end{assumption}

\begin{assumption}\label{Assumption:SwitchingCosts}
	For every $i,j,k \in \mathbb{I}$ and $\forall t \in [0,T]$, we have a.s.:
	\begin{equation}\label{eq:Optimal-Switching-NoArbitrage}
	\begin{cases}
	\gamma_{i,i}\left(t\right) = 0 \\
	\gamma_{i,k}\left(t\right) < \gamma_{i,j}\left(t\right) + \gamma_{j,k}\left(t\right),\hspace{1em} \text{ if } i \neq j \text{ and } j \neq k,\\
	\Gamma_{i} \ge \max\limits_{j \neq i}\left\lbrace \Gamma_{j} -\gamma_{i,j}(T) \right\rbrace.
	\end{cases}
	\end{equation}
\end{assumption}

\begin{remark}\label{Remark:SubOptimalSwitchTwice}
	The first line in condition~\eqref{eq:Optimal-Switching-NoArbitrage} shows there is no cost for staying in the same mode. The other two rule out possible arbitrage opportunities (also see \cite{Guo2008b,Hamadene2012}). In particular, we can always restrict our attention to those strategies $\alpha = \left(\tau_{n},\iota_{n}\right)_{n \ge 0} \in \mathcal{A}$ such that $\mathsf{P}\big( \lbrace \tau_{n} = \tau_{n+1}, \tau_{n} < T \rbrace) = 0$ for $n \ge 1$. Indeed, if $H_{n} \coloneqq \lbrace \tau_{n} = \tau_{n+1}, \tau_{n} < T \rbrace$ satisfies $\mathsf{P}(H_{n}) > 0$ for some $n \ge 1$, then by the second line in condition~\eqref{eq:Optimal-Switching-NoArbitrage} we get
	\begin{align*}
	\left(\gamma_{\iota_{n-1},\iota_{n}}(\tau_{n}) + \gamma_{\iota_{n},\iota_{n+1}}(\tau_{n+1})\right)\mathbf{1}_{H_{n}} & = \left(\gamma_{\iota_{n-1},\iota_{n}}(\tau_{n}) + \gamma_{\iota_{n},\iota_{n+1}}(\tau_{n})\right)\mathbf{1}_{H_{n}} \\
	& > \left(\gamma_{\iota_{n-1},\iota_{n+1}}(\tau_{n})\right)\mathbf{1}_{H_{n}}
	\end{align*}
	which shows it is suboptimal to switch twice at the same time.
\end{remark}
\section{A verification theorem.}\label{Section:Optimal-Switching-VerificationTheorem}
Throughout this section, we suppose that there exist processes $Y^{1},\ldots,Y^{m}$ in $\mathcal{Q} \cap \mathcal{S}^{2}$ defined by
\begin{equation}\label{eq:Optimal-Switching-OptimalSwitchingProcessesDefinition}
\begin{split}
Y^{i}_{t} & = \esssup\limits_{\tau \in \mathcal{T}_{t}}\mathsf{E}\left[\int_{t}^{\tau}\psi_{i}(s){d}s + \Gamma_{i}\mathbf{1}_{\{\tau = T \}} + \max\limits_{j \neq i}\left\lbrace Y^{j}_{\tau} -\gamma_{i,j}(\tau) \right\rbrace\mathbf{1}_{\{\tau < T \}}\biggm\vert \mathcal{F}_{t}\right],\\
Y^{i}_{T} & = \Gamma_{i}.
\end{split}
\end{equation}
Sufficient conditions ensuring the existence of $Y^{1},\ldots,Y^{m}$ with these properties are given in Section~\ref{Section:Optimal-Switching-Existence-Proof}. Theorem~\ref{Theorem:Optimal-Switching-VerificationPartial} below verifies that the solution to the optimal switching problem~\eqref{eq:Optimal-Switching-ValueFunction} can be written in terms of these $m$ stochastic processes. In preparation of this \emph{verification theorem}, we need a few preliminary results. Let $U^{i} = \left(U^{i}_{t}\right)_{0 \le t \le T}$, $i \in \mathbb{I}$, be a c\`{a}dl\`{a}g process defined by:
\begin{equation}\label{eq:Optimal-Switching-InterconnectedObstacleProcess}
U^{i}_{t} \coloneqq \Gamma_{i}\mathbf{1}_{\{ t = T \}} + \max\limits_{j \neq i} \left\lbrace Y^{j}_{t} - \gamma_{i,j}\left(t\right)\right\rbrace\mathbf{1}_{\{ t < T \}}, \hspace{1em} 0 \le t \le T.
\end{equation}
Recall that for every $i,j \in \mathbb{I}$ we have $\gamma_{i,j}, Y^{i} \in \mathcal{Q} \cap \mathcal{S}^{2}$, $\Gamma_{i} \in L^{2}$ by assumption. Hence the process $U^{i} \in \mathcal{S}^{2}$ and is therefore of class $[D]$. Recalling Proposition~\ref{Proposition:Optimal-Switching-SnellEnvelopeProperties} and rewriting equation~\eqref{eq:Optimal-Switching-OptimalSwitchingProcessesDefinition} for $Y^{i}_{t}$ as follows,
\begin{align*}\label{eq:Optimal-Switching-SnellEnvelopeCharacterisation}
Y^{i}_{t} & = \esssup\limits_{\tau \in \mathcal{T}_{t}}\mathsf{E}\left[\int_{t}^{\tau}\psi_{i}(s){d}s + U^{i}_{\tau}\biggm\vert \mathcal{F}_{t}\right] \nonumber \\
& = \esssup\limits_{\tau \in \mathcal{T}_{t}}\mathsf{E}\left[\int_{0}^{\tau}\psi_{i}(s){d}s + U^{i}_{\tau}\biggm\vert \mathcal{F}_{t}\right] - \int_{0}^{t}\psi_{i}(s){d}s,\hspace{1em}\mathsf{P}-\text{a.s.}
\end{align*}
we can verify that $\left(Y^{i}_{t} + \int_{0}^{t} \psi_{i}(s){d}s \right)_{0 \le t \le T}$ is the Snell envelope of $\left(U^{i}_{t} + 
\int_{0}^{t} \psi_{i}(s){d}s\right)_{0 \le t \le T}$.

\begin{lemma}\label{Lemma:Optimal-Switching-VerificationLemma}
	Suppose that $Y^{1},\ldots,Y^{m}$ defined in \eqref{eq:Optimal-Switching-OptimalSwitchingProcessesDefinition} are in $\mathcal{Q} \cap \mathcal{S}^{2}$. For each $i \in \mathbb{I}$, let $U^{i}$ be defined as in equation~\eqref{eq:Optimal-Switching-InterconnectedObstacleProcess}. Then for every $\tau_{n} \in \mathcal{T}$ and $\mathcal{F}_{\tau_{n}}$-measurable $\iota_{n} \colon \Omega \to \mathbb{I}$, we have
	\begin{equation}\label{eq:Optimal-Switching-VerificationLemmaSnellEnvelope} 
	Y^{\iota_{n}}_{t} = \esssup\limits_{\tau \in \mathcal{T}_{t}}\mathsf{E}\left[\int_{t}^{\tau}\psi_{\iota_{n}}(s){d}s + U^{\iota_{n}}_{\tau} \biggm \vert \mathcal{F}_{t}\right], \hspace{1em} \mathsf{P}-\text{a.s. }\forall \hspace{2bp} \tau_{n} \le t  \le T.
	\end{equation}
	Furthermore, there exist a uniformly integrable c\`{a}dl\`{a}g martingale $M^{\iota_{n}} = \left(M^{\iota_{n}}_{t}\right)_{\tau_{n} \le t \le T}$ and a predictable, continuous, increasing process $A^{\iota_{n}} = \left(A^{\iota_{n}}_{t}\right)_{\tau_{n} \le t \le T}$ such that
	\begin{equation}\label{eq:Optimal-Switching-VerificationLemmaMeyerDecomposition}
	Y^{\iota_{n}}_{t} + \int_{0}^{t}\psi_{\iota_{n}}(s){d}s = M^{\iota_{n}}_{t} - A^{\iota_{n}}_{t},\hspace{1em}\mathsf{P}-\text{a.s. }\forall \hspace{2bp} \tau_{n} \le t \le T.
	\end{equation}
\end{lemma}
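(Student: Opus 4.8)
The plan is to reduce both assertions to the corresponding statements for a \emph{fixed} mode $j \in \mathbb{I}$ and then to glue these together over the partition $\{\iota_n = j\}_{j \in \mathbb{I}}$ of $\Omega$. The whole argument rests on the observation that $\{\iota_n = j\} \in \mathcal{F}_{\tau_n} \subseteq \mathcal{F}_t$ for every $t \ge \tau_n$, so each indicator $\mathbf{1}_{\{\iota_n = j\}}$ is $\mathcal{F}_t$-measurable and may be moved freely in and out of the conditional expectations $\mathsf{E}[\,\cdot\,|\mathcal{F}_t]$ and, being nonnegative, through the essential suprema appearing below. Recall from the discussion preceding the lemma that for each fixed $j$ the process $Z^{j} := (Y^{j}_{t} + \int_{0}^{t} \psi_{j}(s)\,ds)$ is the Snell envelope of $(U^{j}_{t} + \int_{0}^{t} \psi_{j}(s)\,ds)$.

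For the Snell envelope characterisation \eqref{eq:Optimal-Switching-VerificationLemmaSnellEnvelope}, I would use Remark~\ref{Remark:Optimal-Switching-SubscriptModeIndicators} to write $Y^{\iota_{n}}_{t} = \sum_{j} \mathbf{1}_{\{\iota_{n}=j\}} Y^{j}_{t}$, and likewise for $\psi_{\iota_{n}}$ and $U^{\iota_{n}}$. On each event $\{\iota_{n} = j\}$ the integrand $\int_{t}^{\tau} \psi_{\iota_{n}}(s)\,ds + U^{\iota_{n}}_{\tau}$ coincides with its fixed-index counterpart, so I would multiply the fixed-$j$ identity $Y^{j}_{t} = \esssup_{\tau \in \mathcal{T}_{t}}\mathsf{E}[\int_{t}^{\tau} \psi_{j}(s)\,ds + U^{j}_{\tau} \mid \mathcal{F}_{t}]$ by $\mathbf{1}_{\{\iota_{n}=j\}}$, invoke the $\mathcal{F}_{t}$-measurability of the indicator (valid since $t \ge \tau_{n}$) together with the identity $\mathbf{1}_{A}\esssup_{\tau} X_{\tau} = \esssup_{\tau}(\mathbf{1}_{A} X_{\tau})$ for $A \in \mathcal{F}_{t}$, and sum over the disjoint events $\{\iota_{n}=j\}$, $j \in \mathbb{I}$. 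This yields \eqref{eq:Optimal-Switching-VerificationLemmaSnellEnvelope}.

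For the decomposition \eqref{eq:Optimal-Switching-VerificationLemmaMeyerDecomposition}, I would first treat each fixed index $j$. Applying the Meyer decomposition (Proposition~\ref{Proposition:Optimal-Switching-SnellEnvelopeProperties}, part 2) to the Snell envelope $Z^{j}$ gives $Z^{j} = M^{j} - A^{j} - B^{j}$ with $M^{j}$ a uniformly integrable c\`{a}dl\`{a}g martingale, $A^{j}$ predictable continuous increasing, and $B^{j}$ predictable purely discontinuous increasing. The crucial step is to show $B^{j} \equiv 0$. Since $Y^{j} \in \mathcal{Q}$ and $\int_{0}^{\cdot} \psi_{j}(s)\,ds$ is continuous, $Z^{j}$ is quasi-left-continuous, so $\triangle_{S} Z^{j} = 0$ a.s.\ at every predictable time $S$; the quasi-left-continuity of $\mathbb{F}$ (Assumption~\ref{Assumption:QLCFiltration}) gives $\triangle_{S} M^{j} = 0$ a.s.\ via Proposition~\ref{Proposition:Optimal-Switching-qlcFiltrations}(2), and $\triangle_{S} A^{j} = 0$ by continuity. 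Hence $\triangle_{S} B^{j} = 0$ a.s.\ at every predictable $S$; as $B^{j}$ is predictable, purely discontinuous and increasing, its jumps are exhausted by a sequence of predictable times, forcing $B^{j} = B^{j}_{0} = 0$, so $Z^{j} = M^{j} - A^{j}$ with $A^{j}$ continuous.

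It remains to glue these decompositions. Setting $M^{\iota_{n}} := \sum_{j} \mathbf{1}_{\{\iota_{n}=j\}} M^{j}$ and $A^{\iota_{n}} := \sum_{j} \mathbf{1}_{\{\iota_{n}=j\}} A^{j}$ on $[\tau_{n}, T]$, the representation \eqref{eq:Optimal-Switching-VerificationLemmaMeyerDecomposition} is immediate from $Z^{\iota_{n}} = \sum_{j} \mathbf{1}_{\{\iota_{n}=j\}} Z^{j}$; that $M^{\iota_{n}}$ is a uniformly integrable c\`{a}dl\`{a}g martingale on $[\tau_{n}, T]$ follows by pulling the $\mathcal{F}_{\tau_{n}}$-measurable indicators through $\mathsf{E}[\,\cdot\,|\mathcal{F}_{s}]$ for $\tau_{n} \le s \le t$, and $A^{\iota_{n}}$ is pathwise continuous and increasing. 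I expect the main technical obstacle to be verifying that $A^{\iota_{n}}$ is \emph{predictable} on $[\![\tau_{n}, T]\!]$: multiplying the predictable process $A^{j}$ by the $\mathcal{F}_{\tau_{n}}$-measurable indicator $\mathbf{1}_{\{\iota_{n}=j\}}$ is not obviously predictable, since $\{\iota_{n}=j\}$ need not lie in $\mathcal{F}_{\tau_{n}^{-}}$. This is resolved by the standard fact that for $A \in \mathcal{F}_{\tau_{n}}$ the stochastic interval $]\!](\tau_{n})_{A}, T]\!]$ is predictable (where $(\tau_{n})_{A}$ is the restriction of $\tau_{n}$ to $A$), so that $\mathbf{1}_{\{\iota_{n}=j\}}\mathbf{1}_{]\!]\tau_{n},T]\!]}$ is predictable; multiplying by the predictable $A^{j}$ and summing over $j$ then shows $A^{\iota_{n}}$ is predictable. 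Alternatively, one could bypass the gluing altogether by applying the Meyer decomposition and the $B \equiv 0$ argument directly to the random-index Snell envelope $Z^{\iota_{n}}$ on the shifted horizon $[\tau_{n}, T]$, which delivers the predictability of $A^{\iota_{n}}$ automatically.
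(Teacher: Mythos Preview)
Your proposal is correct and follows essentially the same route as the paper: reduce to fixed indices $j$, use the $\mathcal{F}_{\tau_n}$-measurability of $\mathbf{1}_{\{\iota_n=j\}}$ to pass indicators through conditional expectations and essential suprema, and then glue. The only minor difference is that the paper dispatches $B^{j}\equiv 0$ by noting that $Z^{j}\in\mathcal{Q}\cap\mathcal{S}^{2}$ makes it a \emph{regular} class-$[D]$ supermartingale and invoking Theorem~VII.10 of Dellacherie--Meyer to conclude the compensator is continuous, whereas you argue directly via $\triangle_{S}Z^{j}=\triangle_{S}M^{j}-\triangle_{S}A^{j}-\triangle_{S}B^{j}$ at predictable $S$; the content is the same, and your additional care about the predictability of $A^{\iota_n}$ is a point the paper glosses over.
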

\begin{proof}
The claim~\eqref{eq:Optimal-Switching-VerificationLemmaSnellEnvelope} is established in the same way as the first few lines of Theorem 1 in \cite{Djehiche2009} so the proof is sketched. We need to show that $Y^{\iota_{n}}_{t} + \int_{0}^{t}\psi_{\iota_{n}}(s){d}s$ is the Snell envelope of $U^{\iota_{n}}_{t} + \int_{0}^{t}\psi_{\iota_{n}}(s){d}s$ for $\tau_{n} \le t \le T$. Our previous discussion established under the current hypotheses that, for every $i \in \mathbb{I}$, $Y^{i}_{t} + \int_{0}^{t}\psi_{i}(s){d}s$ is the Snell envelope of $U^{i}_{t} + \int_{0}^{t}\psi_{i}(s){d}s$ on $[0,T]$. Since $\mathbf{1}_{\lbrace \iota_{n} = i \rbrace}$ is non-negative and $\mathcal{F}_{\tau_{n}}$-measurable, we can show that $\left(Y^{i}_{t} + \int_{0}^{t}\psi_{i}(s){d}s\right)\mathbf{1}_{\lbrace \iota_{n} = i \rbrace}$ is the smallest c\`{a}dl\`{a}g supermartingale dominating $\left(U^{i}_{t} + \int_{0}^{t}\psi_{i}(s){d}s\right)\mathbf{1}_{\lbrace \iota_{n} = i \rbrace}$ on $[\tau_{n},T]$. By summing over $i \in \mathbb{I}$ (recall $\mathbb{I}$ is finite), we have $\left(Y^{\iota_{n}}_{t} + \int_{0}^{t}\psi_{\iota_{n}}(s){d}s\right)$ is the smallest c\`{a}dl\`{a}g supermartingale dominating $\left(U^{\iota_{n}}_{t} + \int_{0}^{t}\psi_{\iota_{n}}(s){d}s\right)$ for $\tau_{n} \le t \le T$. In particular, we have
\[
Y^{\iota_{n}}_{t} + \int_{0}^{t}\psi_{\iota_{n}}(s){d}s = \esssup\limits_{\tau \in \mathcal{T}_{t}}\mathsf{E}\left[\int_{0}^{\tau}\psi_{\iota_{n}}(s){d}s + U^{\iota_{n}}_{\tau}\biggm\vert \mathcal{F}_{t}\right],\hspace{1em}\mathsf{P}-\text{a.s. }\forall \hspace{1bp} t \le \tau_{n} \le T,
\]
and equation~\eqref{eq:Optimal-Switching-VerificationLemmaSnellEnvelope} follows by $\mathcal{F}_{t}$-measurability of the integral term for $t \ge \tau_{n}$.

For the second part of the claim, we use the unique Meyer decomposition of the Snell envelope (property 2 of Proposition~\ref{Proposition:Optimal-Switching-SnellEnvelopeProperties}) to show that for every $i \in \mathbb{I}$,
\[
Y^{i}_{t} + \int_{0}^{t}\psi_{i}(s){d}s = M^{i}_{t} - A^{i}_{t}\hspace{1em} \text{for } t \in [0,T],
\]
where $M^{i} = \left(M^{i}_{t}\right)_{0 \le t \le T}$ is a c\`{a}dl\`{a}g uniformly integrable martingale and $A^{i} = \left(A^{i}_{t}\right)_{0 \le t \le T}$ is a predictable, increasing process. The Snell envelope $\left(Y^{i}_{t} + \int_{0}^{t}\psi_{i}(s){d}s\right)_{0 \le t \le T}$ is in $\mathcal{Q} \cap \mathcal{S}^{2}$ since $Y^{i} \in \mathcal{Q} \cap \mathcal{S}^{2}$ and $\psi_{i} \in \mathcal{M}^{2}$. This means the Snell envelope is a regular supermartingale of class $[D]$ and Theorem \MakeUppercase{\romannumeral 7}.10 of \cite{Dellacherie1982} asserts that its compensator, $A^{i}$, is continuous.

Using the Meyer decomposition, we see that
\begin{equation}\label{eq:Optimal-Switching-VerificationMeyerDecompositionFull}
Y^{\iota_{n}}_{t} + \int_{0}^{t}\psi_{\iota_{n}}(s){d}s \coloneqq \sum_{i \in \mathbb{I}}\left(Y^{i}_{t} + \int_{0}^{t}\psi_{i}(s){d}s\right)\mathbf{1}_{\{\iota_{n} = i\}} = \sum_{i \in \mathbb{I}}\left(M^{i}_{t} - A^{i}_{t}\right)\mathbf{1}_{\{\iota_{n} = i\}}.
\end{equation}
Now, using $\mathbf{1}_{\lbrace \iota_{n} = i \rbrace}$ is non-negative and $\mathcal{F}_{t}$-measurable for $t \ge \tau_{n}$, we see that $M^{\iota_{n}}$ defined on $[\tau_{n},T]$ by 
\begin{equation}\label{eq:Optimal-Switching-VerificationMeyerDecompositionMartingale}
M^{\iota_{n}(\omega)}_{t}(\omega) \coloneqq \sum_{i \in \mathbb{I}}M^{i}_{t}(\omega)\mathbf{1}_{\{\iota_{n} = i\}}(\omega),\hspace{1em} \forall\hspace{1bp} (\omega,t) \in [\tau_{n},T]
\end{equation}
is a uniformly integrable c\`{a}dl\`{a}g martingale $\mathsf{P}$-a.s. for every $\tau_{n} \le t \le T$. Likewise, $A^{\iota_{n}}$ defined on $[\tau_{n},T]$ by
\begin{equation}\label{eq:Optimal-Switching-VerificationMeyerDecompositionCompensator}
A^{\iota_{n}(\omega)}_{t}(\omega) \coloneqq \sum_{i \in \mathbb{I}}A^{i}_{t}(\omega)\mathbf{1}_{\{\iota_{n} = i\}}(\omega),\hspace{1em} \forall\hspace{1bp} (\omega,t) \in [\tau_{n},T]
\end{equation}
is a continuous, predictable increasing process $\mathsf{P}$-a.s. for every $\tau_{n} \le t \le T$. By equation~\eqref{eq:Optimal-Switching-VerificationMeyerDecompositionFull}, $M^{\iota_{n}}_{t}$ and $A^{\iota_{n}}_{t}$ provide the (unique) Meyer decomposition of $Y^{\iota_{n}}_{t}$ $\mathsf{P}$-a.s. for every $\tau_{n} \le t \le T$.
\end{proof}

\begin{theorem}[Verification]\label{Theorem:Optimal-Switching-VerificationPartial}
	Suppose there exist $m$ unique processes $Y^{1},\ldots,Y^{m}$ in $\mathcal{Q} \cap \mathcal{S}^{2}$ which satisfy equation~\eqref{eq:Optimal-Switching-OptimalSwitchingProcessesDefinition}. Define a sequence of times $\left\lbrace\tau^{*}_{n}\right\rbrace_{n \ge 0}$ and mode indicators $\left\lbrace\iota^{*}_{n}\right\rbrace_{n \ge 0}$ as follows:
	\begin{gather}\label{eq:Optimal-Switching-OptimalStoppingStrategy}
	\tau^{*}_{0} = t,\hspace{1em} \iota^{*}_{0} = i, \\
	\begin{cases}
	\tau^{*}_{n} = \inf\left\lbrace s \ge \tau^{*}_{n-1} \colon Y^{\iota^{*}_{n-1}}_{s} = \max\limits_{j \neq \iota^{*}_{n-1}} \left(Y^{j}_{s} - \gamma_{\iota^{*}_{n-1},j}\left(s\right) \right)  \right\rbrace \wedge T,\\
	\iota^{*}_{n} = \sum\limits_{j \in \mathbb{I}} j \mathbf{1}_{F^{\iota^{*}_{n-1}}_{j}}
	\end{cases} \nonumber \\
	\text{for } n \ge 1, \text{ where } F^{\iota^{*}_{n-1}}_{j} \text{ is the event} : \nonumber \\
	F^{\iota^{*}_{n-1}}_{j} \coloneqq \left\lbrace Y^{j}_{\tau^{*}_{n}} -\gamma_{\iota^{*}_{n-1},j}\left(\tau^{*}_{n}\right) = \max\limits_{k \neq \iota^{*}_{n-1}}\left( Y^{k}_{\tau^{*}_{n}} -\gamma_{\iota^{*}_{n-1},k}\left(\tau^{*}_{n}\right) \right) \right\rbrace. \nonumber
	\end{gather}
	Then the sequence $\alpha^{*} = \left(\tau^{*}_{n},\iota^{*}_{n}\right)_{n \ge 0} \in \mathcal{A}_{t,i}$ and satisfies \begin{equation}\label{eq:Optimal-Switching-UpperBoundOnThePerformanceIndex}
	Y^{i}_{t} = J(\alpha^{*};t,i) = \esssup\limits_{\alpha \in \mathcal{A}_{t,i}}J(\alpha;t,i) \hspace{1em} \mathsf{P}-\text{a.s.}
	\end{equation}
\end{theorem}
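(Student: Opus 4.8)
The plan is to prove the statement in three stages: first verify that the candidate strategy $\alpha^{*}$ is admissible, i.e. lies in $\mathcal{A}_{t,i}$; then establish the upper bound $J(\alpha;t,i)\le Y^{i}_{t}$ for every $\alpha\in\mathcal{A}_{t,i}$; and finally show this bound is attained, $J(\alpha^{*};t,i)=Y^{i}_{t}$. Throughout I would exploit the fact recorded after \eqref{eq:Optimal-Switching-InterconnectedObstacleProcess} and in Lemma~\ref{Lemma:Optimal-Switching-VerificationLemma} that $Y^{i}_{\cdot}+\int_{0}^{\cdot}\psi_{i}(s)\,ds$ is the Snell envelope of $U^{i}_{\cdot}+\int_{0}^{\cdot}\psi_{i}(s)\,ds$. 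A preliminary step is to check that each obstacle $U^{i}$ satisfies the left-upper-semicontinuity condition \eqref{eq:Optimal-Switching-LeftUpperSemicontinuity}: for $\tau_{n}\uparrow\tau<T$ this follows from quasi-left-continuity of $Y^{j}$ and $\gamma_{i,j}$, while for $\tau_{n}\uparrow\tau=T$ it reduces to $\max_{j\ne i}\{\Gamma_{j}-\gamma_{i,j}(T)\}\le\Gamma_{i}$, which is precisely the third line of \eqref{eq:Optimal-Switching-NoArbitrage}. This guarantees, via Proposition~\ref{Proposition:Optimal-Switching-SnellEnvelopeProperties}, parts~(3)--(4), that the debut times $\tau^{*}_{n}$ of \eqref{eq:Optimal-Switching-DebutTime} are optimal and that $Y^{\iota^{*}_{n-1}}_{\tau^{*}_{n}}=U^{\iota^{*}_{n-1}}_{\tau^{*}_{n}}$ holds on $\{\tau^{*}_{n}<T\}$.

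For the upper bound I would iterate the supermartingale property of the Snell envelope. For any $\alpha\in\mathcal{A}_{t,i}$ and each $n$, evaluating the representation of Lemma~\ref{Lemma:Optimal-Switching-VerificationLemma} at the stopping time $\tau_{n+1}\ge\tau_{n}$ gives $Y^{\iota_{n}}_{\tau_{n}}\ge\mathsf{E}[\int_{\tau_{n}}^{\tau_{n+1}}\psi_{\iota_{n}}(s)\,ds+U^{\iota_{n}}_{\tau_{n+1}}\mid\mathcal{F}_{\tau_{n}}]$, and since $U^{\iota_{n}}_{\tau_{n+1}}\ge(Y^{\iota_{n+1}}_{\tau_{n+1}}-\gamma_{\iota_{n},\iota_{n+1}}(\tau_{n+1}))\mathbf{1}_{\{\tau_{n+1}<T\}}+\Gamma_{\iota_{n}}\mathbf{1}_{\{\tau_{n+1}=T\}}$ by \eqref{eq:Optimal-Switching-InterconnectedObstacleProcess} and $\iota_{n+1}\ne\iota_{n}$, I can telescope, freezing the terminal contribution $\Gamma_{\iota_{n}}$ on $\{\tau_{n+1}=T\}$. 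Conditioning on $\mathcal{F}_{t}$ and letting $n\to\infty$, the finiteness condition \eqref{eq:Optimal-Switching-FiniteStrategy} collapses the frozen terminal term to $\Gamma_{\mathbf{u}_{T}}$ and the running term to $\int_{t}^{T}\psi_{\mathbf{u}_{s}}(s)\,ds$, the cost terms converge by \eqref{eq:Optimal-Switching-ConvergenceOfSwitchingCost}, and the residual boundary term $\mathsf{E}[U^{\iota_{n-1}}_{\tau_{n}}\mathbf{1}_{\{\tau_{n}<T\}}\mid\mathcal{F}_{t}]$ vanishes because $\mathsf{P}(\tau_{n}<T)=\mathsf{P}(N(\alpha)\ge n)\to0$ while the family $\{U^{i}\}$ is bounded in $L^{2}$. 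Comparing with \eqref{eq:Optimal-Switching-DynamicPerformanceIndex} yields $Y^{i}_{t}\ge J(\alpha;t,i)$.

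The attainment stage repeats this telescoping for $\alpha^{*}$, now with every inequality an equality. Optimality of the debut time (Proposition~\ref{Proposition:Optimal-Switching-SnellEnvelopeProperties}, part~(3)) turns the supermartingale inequality into $Y^{\iota^{*}_{n}}_{\tau^{*}_{n}}=\mathsf{E}[\int_{\tau^{*}_{n}}^{\tau^{*}_{n+1}}\psi_{\iota^{*}_{n}}(s)\,ds+U^{\iota^{*}_{n}}_{\tau^{*}_{n+1}}\mid\mathcal{F}_{\tau^{*}_{n}}]$, and the definition of $\iota^{*}_{n+1}$ as a maximiser in \eqref{eq:Optimal-Switching-InterconnectedObstacleProcess} gives $U^{\iota^{*}_{n}}_{\tau^{*}_{n+1}}=Y^{\iota^{*}_{n+1}}_{\tau^{*}_{n+1}}-\gamma_{\iota^{*}_{n},\iota^{*}_{n+1}}(\tau^{*}_{n+1})$ on $\{\tau^{*}_{n+1}<T\}$. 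The resulting identities exhibit the partial sums of switching costs as increments of an $L^{2}$-bounded martingale built from the $Y^{i}$ and $\psi_{i}$, which by Doob's inequality simultaneously establishes the integrability requirement \eqref{eq:Optimal-Switching-UISupremumClosureSwitchingCosts} (so that, together with the finiteness below, $\alpha^{*}\in\mathcal{A}_{t,i}$) and, in the limit, the equality $Y^{i}_{t}=J(\alpha^{*};t,i)$. Combined with the upper bound this proves \eqref{eq:Optimal-Switching-UpperBoundOnThePerformanceIndex}.

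The main obstacle is the finiteness condition \eqref{eq:Optimal-Switching-FiniteStrategy} for $\alpha^{*}$: with signed costs the classical argument (infinitely many switches cost infinitely much) fails, so I would argue by contradiction using quasi-left-continuity. Suppose $\mathsf{P}(N(\alpha^{*})=\infty)>0$. By Remark~\ref{Remark:SubOptimalSwitchTwice} the switching times may be taken strictly increasing, so on $\{N(\alpha^{*})=\infty\}$ they increase to a limit $\tau^{*}_{\infty}$ announced by $\{\tau^{*}_{n}\}$, hence predictable; since $Y^{j},\gamma_{i,j}\in\mathcal{Q}$ have no jump at a predictable time (Assumption~\ref{Assumption:QLCFiltration}), they are all left-continuous at $\tau^{*}_{\infty}$. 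Because $\mathbb{I}$ is finite, some consecutive triple of modes $(a,b,c)$ with $a\ne b$ and $b\ne c$ recurs along a subsequence $k\to\infty$; passing to the limit in the obstacle identities $Y^{a}_{\tau^{*}_{k}}=Y^{b}_{\tau^{*}_{k}}-\gamma_{a,b}(\tau^{*}_{k})$ and $Y^{b}_{\tau^{*}_{k+1}}=Y^{c}_{\tau^{*}_{k+1}}-\gamma_{b,c}(\tau^{*}_{k+1})$, together with the maximiser inequality $Y^{b}_{\tau^{*}_{k}}-\gamma_{a,b}(\tau^{*}_{k})\ge Y^{c}_{\tau^{*}_{k}}-\gamma_{a,c}(\tau^{*}_{k})$, yields $\gamma_{a,c}(\tau^{*}_{\infty})\ge\gamma_{a,b}(\tau^{*}_{\infty})+\gamma_{b,c}(\tau^{*}_{\infty})$. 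This contradicts the strict no-loop inequality in \eqref{eq:Optimal-Switching-NoArbitrage} when $a\ne c$, and contradicts $0<\gamma_{a,b}(\tau^{*}_{\infty})+\gamma_{b,a}(\tau^{*}_{\infty})$ when $a=c$; hence $\mathsf{P}(N(\alpha^{*})=\infty)=0$, giving \eqref{eq:Optimal-Switching-FiniteStrategy}. I expect this finiteness argument, rather than the telescoping, to be the crux of the proof.
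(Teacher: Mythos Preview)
Your plan is correct and follows essentially the same route as the paper's proof: the telescoping via the Snell-envelope representation of Lemma~\ref{Lemma:Optimal-Switching-VerificationLemma}, the passage to the limit using dominated convergence, and the contradiction argument for finiteness via quasi-left-continuity and the no-loop inequality in Assumption~\ref{Assumption:SwitchingCosts} all match the paper (which delegates finiteness and cost integrability to the appendix, Lemma~A.1 and Theorem~A.4).

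Two small points deserve tightening. First, you invoke Remark~\ref{Remark:SubOptimalSwitchTwice} to claim the $\tau^{*}_{n}$ may be \emph{taken} strictly increasing, but that remark concerns arbitrary strategies, not the specific $\alpha^{*}$; you must \emph{prove} $\mathsf{P}(\tau^{*}_{n}=\tau^{*}_{n+1}<T)=0$ (the paper's Lemma~A.1(i)), which follows from the same no-loop contradiction you use for finiteness. Second, your sketch of the integrability condition~\eqref{eq:Optimal-Switching-UISupremumClosureSwitchingCosts} is slightly imprecise: the partial cost sums are not themselves martingale increments. The paper's decomposition (Proposition~A.2) is
\[
C^{\alpha^{*}}_{n}=Y^{\iota^{*}_{N^{*}_{n}}}_{\tau^{*}_{N^{*}_{n}}}-Y^{i}_{t}+\int_{t}^{\tau^{*}_{N^{*}_{n}}}\psi_{\mathbf{u}^{*}_{s}}(s)\,ds-X_{N^{*}_{n}},
\]
where $X_{n}=\sum_{k\le n}(M^{\iota^{*}_{k-1}}_{\tau^{*}_{k}}-M^{\iota^{*}_{k-1}}_{\tau^{*}_{k-1}})$ is built from the \emph{Meyer-decomposition martingales} $M^{i}$ of Lemma~\ref{Lemma:Optimal-Switching-VerificationLemma}, not from $Y^{i}$ directly; the $L^{2}$-bound on $X_{n}$ comes from orthogonality of martingale increments and $M^{i}\in\mathcal{S}^{2}$ (Proposition~\ref{Proposition:Optimal-Switching-SquareIntegrableMartingale}), after which Doob's inequality gives $\mathsf{E}[\sup_{n}X_{n}^{2}]<\infty$ as you indicate.
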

\begin{proof}
Standard arguments can be used to verify that $\tau^{*}_{n}$ is a stopping time and each $\iota^{*}_{n}$ is $\mathcal{F}_{\tau^{*}_{n}}$-measurable. The appendix confirms that $\alpha^{*} \in \mathcal{A}_{t,i}$. As for the claim \eqref{eq:Optimal-Switching-UpperBoundOnThePerformanceIndex}, it holds trivially for $t = T$ since $Y^{i}_{T} = \Gamma_{i} = V(t,i)$ a.s. for every $i \in \mathbb{I}$. Henceforth, we assume that $t \in [0,T)$.

Recall the process $U^{i} = \left(U^{i}_{t}\right)_{0 \le t \le T}$ defined in equation~\eqref{eq:Optimal-Switching-InterconnectedObstacleProcess}. By our assumptions on $Y^{i},\psi_{i},\Gamma_{i}$ and $\gamma_{i,j}$ for every $i,j \in \mathbb{I}$, we have $U^{i} \in \mathcal{S}^{2}$ and we assert that $\left(Y^{i}_{t} + \int_{0}^{t} \psi_{i}(s){d}s \right)_{0 \le t \le T}$ is the Snell envelope of $\left(U^{i}_{t} + \int_{0}^{t} \psi_{i}(s){d}s \right)_{0 \le t \le T}$. For $i,j \in \mathbb{I}$, using $Y^{j}_{T} = \Gamma_{j}$ $\mathsf{P}$-a.s., quasi-left-continuity of $Y^{j}$ and $\gamma_{i,j}$, and Assumption~\ref{Assumption:SwitchingCosts} on the terminal condition for the switching costs, we have
\[ 
\lim_{t \uparrow T}\left(\max_{j \neq i}\left\lbrace Y^{j}_{t} -\gamma_{i,j}(t) \right\rbrace\right) = \max_{j \neq i}\left\lbrace \Gamma_{j} - \gamma_{i,j}(T) \right\rbrace \le \Gamma_{i} \hspace{1em} \mathsf{P}-\text{a.s.}
\]
Therefore, $U^{i}$ is quasi-left-continuous on $[0,T)$ and $\lim_{t \uparrow T}U^{i}_{t} \le U^{i}_{T}$ $\mathsf{P}$-a.s. Combining this with the continuity of the integral, we see that $\left(U^{i}_{t} + \int_{0}^{t} \psi_{i}(s){d}s \right)_{0 \le t \le T}$ satisfies the hypotheses of property 3 in Proposition~\ref{Proposition:Optimal-Switching-SnellEnvelopeProperties}. Let $\left(\tau_{n}^{*},\iota_{n}^{*}\right)_{n \ge 0}$ be the pair of random times and mode indicators in the statement of the theorem and $\mathbf{u}^{*}$ be the associated mode indicator function. In conjunction with Lemma~\ref{Lemma:Optimal-Switching-VerificationLemma}, $\lbrace \tau_{n}^{*}\rbrace$ defines a sequence of stopping times where, for $n \ge 1$, $\tau^{*}_{n}$ is optimal for an appropriately defined optimal stopping problem.

The remaining arguments, which are similar to those establishing Theorem~1 in \cite{Djehiche2009}, are only sketched here. The main idea is as follows: starting from an initial mode $i \in \mathbb{I}$ at time $t \in [0,T]$, iteratively solve the optimal stopping problem on the right-hand-side of \eqref{eq:Optimal-Switching-OptimalSwitchingProcessesDefinition} using the theory of Snell envelopes and Lemma~\ref{Lemma:Optimal-Switching-VerificationLemma}. The minimal optimal stopping times characterise the switching times whilst the maximising modes are paired with them to give the switching strategy. This characterisation will eventually lead to:
\begin{equation}\label{eq:Optimal-Switching-VerificationProof5}
\begin{split}
\forall N \ge 1,\hspace{1em} Y^{i}_{t} = {} & \mathsf{E}\left[\int_{t}^{\tau^{*}_{N}}\psi_{\mathbf{u}^{*}_{s}}(s){d}s + \sum_{n = 1}^{N}\Gamma_{\iota^{*}_{n-1}}\mathbf{1}_{\lbrace \tau^{*}_{n-1} < T \rbrace}\mathbf{1}_{\lbrace \tau^{*}_{n} = T \rbrace} - \sum_{n = 1}^{N}\gamma_{\iota^{*}_{n-1},\iota^{*}_{n}}\left(\tau^{*}_{n}\right)\mathbf{1}_{\lbrace \tau^{*}_{n} < T \rbrace} \biggm \vert \mathcal{F}_{t} \right] \\
& + \mathsf{E}\left[Y^{\iota^{*}_{N}}_{\tau^{*}_{N}} \mathbf{1}_{\lbrace \tau^{*}_{N} < T \rbrace} \biggm \vert \mathcal{F}_{t} \right]
\end{split}
\end{equation}

By Lemma~\ref{Lemma:Optimal-Switching-FiniteStrategyCandidateOptimal} and Theorem~\ref{Theorem:Optimal-Switching-SquareIntegrableSwitchingCosts} in the appendix respectively, the times $\left\lbrace\tau_{n}^{*}\right\rbrace_{n \ge 0}$ satisfy the finiteness condition~\eqref{eq:Optimal-Switching-FiniteStrategy} and $\mathsf{E}\big[\sup_{n}\big|C^{\alpha^{*}}_{n}\big|\big] < \infty$ holds for the cumulative switching costs. Appealing also to the conditional dominated convergence theorem (cf. \eqref{eq:Optimal-Switching-ConvergenceOfSwitchingCost}), we may take the limit as $N \to \infty$ in equation~\eqref{eq:Optimal-Switching-VerificationProof5} and use the definition of $\mathbf{u}^{*}$ to get:
\begin{equation*}\label{eq:Optimal-Switching-VerificationProof6}
Y^{i}_{t} = \mathsf{E}\left[\int_{t}^{T}\psi_{\mathbf{u}^{*}_{s}}(s){d}s + \Gamma_{\mathbf{u}^{*}_{T}} - \sum_{n \ge 1}\gamma_{\iota^{*}_{n-1},\iota^{*}_{n}}\left(\tau^{*}_{n}\right)\mathbf{1}_{\lbrace \tau^{*}_{n} < T \rbrace} \biggm \vert \mathcal{F}_{t}\right] = J(\alpha^{*};t,i).
\end{equation*}
Now, take any arbitrary admissible strategy $\alpha = \left(\tau_{n},\iota_{n}\right)_{n \ge 0} \in \mathcal{A}_{t,i}$. Since the sequence $(\tau_{n},\iota_{n})_{n \ge 1}$, does not necessarily achieve the essential suprema / maxima in the iterated optimal stopping problems, we have for all $N \ge 1$:
\[
\begin{split}
Y^{i}_{t} \ge {} & \mathsf{E}\left[\int_{t}^{\tau_{N}}\psi_{\mathbf{u}_{s}}(s){d}s + \sum_{n = 1}^{N}\Gamma_{\iota_{n-1}}\mathbf{1}_{\lbrace \tau_{n-1} < T \rbrace}\mathbf{1}_{\lbrace \tau_{n} = T \rbrace} - \sum_{n = 1}^{N}\gamma_{\iota_{n-1},\iota_{n}}\left(\tau_{n}\right)\mathbf{1}_{\lbrace \tau_{n} < T \rbrace} \biggm \vert \mathcal{F}_{t} \right] \\
& + \mathsf{E}\left[Y^{\iota_{N}}_{\tau_{N}} \mathbf{1}_{\lbrace \tau_{N} < T \rbrace} \biggm \vert \mathcal{F}_{t} \right]
\end{split}
\]
Passing to the limit $N \to \infty$ and using the conditional dominated convergence theorem, we obtain
\[
J(\alpha^{*};t,i) = Y^{i}_{t} \ge \mathsf{E}\left[\int_{t}^{T}\psi_{\mathbf{u}_{s}}(s){d}s + \Gamma_{\mathbf{u}_{T}} - \sum_{n \ge 1}\gamma_{\iota_{n-1},\iota_{n}}\left(\tau_{n}\right)\mathbf{1}_{\lbrace \tau_{n} < T \rbrace} \biggm \vert \mathcal{F}_{t}\right] = J(\alpha;t,i).
\]
Since $\alpha \in \mathcal{A}_{t,i}$ was arbitrary we have just proved \eqref{eq:Optimal-Switching-UpperBoundOnThePerformanceIndex}.
\end{proof}

\section{Existence of the candidate optimal processes.}\label{Section:Optimal-Switching-Existence-Proof}
The existence of the processes $Y^{1},\ldots,Y^{m}$ which satisfy Theorem~\ref{Theorem:Optimal-Switching-VerificationPartial} is proved in this section following the arguments of \cite{Djehiche2009}. The interested reader may also compare the proof to that of Lemma 2.1 and Corollary 2.1 in \cite{Bayraktar2010}.
\subsection{The case of at most $n \ge 0$ switches.}
For each $n \ge 0$, define process $Y^{1,n},\ldots,Y^{m,n}$ recursively as follows: for $i \in \mathbb{I}$ and for any $0 \le t \le T$, first set
\begin{equation}\label{eq:Optimal-Switching-OptimalProcess0Switches}
Y^{i,0}_{t} = \mathsf{E}\left[\int_{t}^{T}\psi_{i}(s){d}s + \Gamma_{i} \biggm\vert \mathcal{F}_{t}\right],
\end{equation}
and for $n \ge 1$,
\begin{equation}\label{eq:Optimal-Switching-OptimalProcessnSwitches}
Y^{i,n}_{t} = \esssup\limits_{\tau \in \mathcal{T}_{t}}\mathsf{E}\left[\int_{t}^{\tau}\psi_{i}(s){d}s + \Gamma_{i}\mathbf{1}_{\{\tau = T \}} + \max\limits_{j \neq i}\left\lbrace Y^{j,n-1}_{\tau} - \gamma_{i,j}(\tau) \right\rbrace\mathbf{1}_{\{\tau < T \}}\biggm\vert \mathcal{F}_{t}\right].
\end{equation}

Define another process $\hat{U}^{i,n} = (\hat{U}^{i,n}_{t})_{0 \le t \le T}$ by:
\begin{equation*}
\hat{U}^{i,n}_{t} \coloneqq \int_{0}^{t}\psi_{i}(s){d}s + \Gamma_{i}\mathbf{1}_{\{ t = T \}} + \max_{j \neq i}\left\lbrace Y^{j,n-1}_{t} - \gamma_{i,j}(t) \right\rbrace\mathbf{1}_{\{ t < T \}}
\end{equation*}

If $\hat{U}^{i,n}$ is of class $[D]$, then by Proposition~\ref{Proposition:Optimal-Switching-SnellEnvelopeProperties} its Snell envelope exists and is defined by
\[
\esssup\limits_{\tau \in \mathcal{T}_{t}}\mathsf{E}\bigl[\hat{U}^{i,n}_{\tau}\big \vert \mathcal{F}_{t}\bigr]
= Y^{i,n}_{t} + \int_{0}^{t} \psi_{i}(s){d}s.
\]
Some properties of $Y^{i,n}$ which verify this are proved in the following lemma. In order to simplify some expressions in the proof, introduce a new process $\hat{Y}^{i,n} = (\hat{Y}^{i,n}_{t})_{0 \le t \le T}$ which is defined by:
\begin{equation*}\label{eq:Optimal-Switching-YHatProcess}
\hat{Y}^{i,n}_{t} \coloneqq Y^{i,n}_{t} + \int_{0}^{t}\psi_{i}(s){d}s.
\end{equation*}

\begin{lemma}\label{Lemma:Optimal-Switching-FinitelyManySwitchesQLC_S2}
	For all $n \ge 0$, the processes $Y^{1,n},\ldots,Y^{m,n}$ defined by \eqref{eq:Optimal-Switching-OptimalProcess0Switches} and \eqref{eq:Optimal-Switching-OptimalProcessnSwitches} are in $\mathcal{Q} \cap \mathcal{S}^{2}$.
\end{lemma}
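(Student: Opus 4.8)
The plan is to argue by induction on $n$, exploiting throughout the identity $\hat Y^{i,n} = Y^{i,n} + \int_0^\cdot \psi_i(s)\,ds$ and the fact that $\hat Y^{i,n}$ is the Snell envelope of $\hat U^{i,n}$, so that the structural properties in Proposition~\ref{Proposition:Optimal-Switching-SnellEnvelopeProperties} and the quasi-left-continuity of $\mathbb{F}$ (Proposition~\ref{Proposition:Optimal-Switching-qlcFiltrations}) can be brought to bear.

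For the base case $n=0$ I would set $\xi_i \coloneqq \int_0^T \psi_i(s)\,ds + \Gamma_i$ and note $\xi_i \in L^2$: indeed $\Gamma_i \in L^2$ by Assumption~\ref{Assumption:ProfitandCost}, while $\psi_i \in \mathcal{M}^2$ gives $\mathsf{E}\big[(\int_0^T|\psi_i(s)|\,ds)^2\big] \le T\,\mathsf{E}\big[\int_0^T |\psi_i(s)|^2\,ds\big] < \infty$ by Cauchy--Schwarz. Then $\hat Y^{i,0}_t = \mathsf{E}[\xi_i \mid \mathcal{F}_t]$ is a càdlàg uniformly integrable martingale, so Doob's $L^2$-inequality places it in $\mathcal{S}^2$, and statement 2 of Proposition~\ref{Proposition:Optimal-Switching-qlcFiltrations} shows it has no jumps at predictable times, hence lies in $\mathcal{Q}$. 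Subtracting the continuous, $\mathcal{S}^2$ process $\int_0^\cdot \psi_i(s)\,ds$ gives $Y^{i,0} \in \mathcal{Q}\cap\mathcal{S}^2$.

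For the inductive step, assume $Y^{j,n-1}\in\mathcal{Q}\cap\mathcal{S}^2$ for all $j\in\mathbb{I}$. Square-integrability is the easy half: since $\gamma_{i,j}, Y^{j,n-1}\in\mathcal{S}^2$ and the index set is finite, $\sup_t|U^{i,n}_t|$ is dominated by $|\Gamma_i| + \sum_{j}(\sup_t|Y^{j,n-1}_t| + \sup_t|\gamma_{i,j}(t)|) \in L^2$, so $U^{i,n}\in\mathcal{S}^2$ and $\hat U^{i,n}\in\mathcal{S}^2\subset[D]$; the fifth property of Proposition~\ref{Proposition:Optimal-Switching-SnellEnvelopeProperties} then yields $\hat Y^{i,n}\in\mathcal{S}^2$, whence $Y^{i,n}\in\mathcal{S}^2$.

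The main work, and the step I expect to be the real obstacle, is quasi-left-continuity of $\hat Y^{i,n}$. I would first record that the obstacle is well behaved: on $[0,T)$ the process $U^{i,n}_t = \max_{j\neq i}\{Y^{j,n-1}_t - \gamma_{i,j}(t)\}$ is a finite maximum of differences of $\mathcal{Q}$-processes, hence quasi-left-continuous there; and since $Y^{j,n-1}_T = \Gamma_j$ together with the quasi-left-continuity of $Y^{j,n-1}$ and $\gamma_{i,j}$ gives $\lim_{t\uparrow T} U^{i,n}_t = \max_{j\neq i}\{\Gamma_j - \gamma_{i,j}(T)\} \le \Gamma_i = U^{i,n}_T$ by the terminal condition in Assumption~\ref{Assumption:SwitchingCosts}. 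I then invoke the Meyer decomposition $\hat Y^{i,n} = M - A - B$ (property 2) with $A$ continuous, $B$ predictable and purely discontinuous, and $\{\Delta B > 0\} \subset \{\hat Y^{i,n}_- = \hat U^{i,n}_-\}$. Because $M$ has no predictable jumps (the filtration is quasi-left-continuous) and $A$ is continuous, for every predictable time $S$ one has $\Delta_S \hat Y^{i,n} = -\Delta_S B$. On $\{\Delta_S B > 0\}$ the inclusion forces $\hat Y^{i,n}_{S^-} = \hat U^{i,n}_{S^-}$; on $\{S<T\}$ the quasi-left-continuity of $\hat U^{i,n}$ gives $\hat U^{i,n}_{S^-}=\hat U^{i,n}_S$, so $\hat Y^{i,n}_{S^-} = \hat U^{i,n}_S \le \hat Y^{i,n}_S = \hat Y^{i,n}_{S^-} - \Delta_S B < \hat Y^{i,n}_{S^-}$, a contradiction; on $\{S=T\}$ one uses instead $\hat Y^{i,n}_T = \hat U^{i,n}_T$ and $\hat U^{i,n}_{T^-}\le \hat U^{i,n}_T$ to obtain the same contradiction. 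Hence $\Delta_S B = 0$ a.s., so $\Delta_S \hat Y^{i,n} = 0$ for every predictable $S$, i.e.\ $\hat Y^{i,n}\in\mathcal{Q}$; subtracting the continuous integral term gives $Y^{i,n}\in\mathcal{Q}$ and closes the induction. The delicate points to handle carefully are the terminal time $S=T$ (which requires the inequality $\lim_{t\uparrow T}U^{i,n}_t \le U^{i,n}_T$ rather than equality) and the passage from the set inclusion $\{\Delta B>0\}\subset\{\hat Y^{i,n}_-=\hat U^{i,n}_-\}$ to its evaluation along the graph of an arbitrary predictable time.
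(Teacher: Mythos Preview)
Your argument is correct and follows the same overall architecture as the paper: induction on $n$, the base case via Doob's inequality and Proposition~\ref{Proposition:Optimal-Switching-qlcFiltrations}, the $\mathcal{S}^{2}$ part of the inductive step via $\hat U^{i,n}\in\mathcal{S}^{2}$, and the $\mathcal{Q}$ part via the Meyer decomposition $\hat Y^{i,n}=M-A-B$ combined with the jump inclusion $\{\Delta B>0\}\subset\{\hat Y^{i,n}_{-}=\hat U^{i,n}_{-}\}$ and the quasi-left-continuity of $M$.

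The one substantive difference is in how you finish the $\mathcal{Q}$ step. The paper argues in expectation: from the inclusion it derives $\mathsf{E}[\hat Y^{i,n}_{\tau^{-}}]\le\mathsf{E}[\hat Y^{i,n}_{\tau}]$ for every predictable $\tau$, combines this with the reverse inequality (valid for any c\`{a}dl\`{a}g supermartingale at a predictable time) to conclude $\hat Y^{i,n}$ is \emph{regular}, and then invokes the Dellacherie--Meyer characterisation (regular class $[D]$ supermartingale $\Leftrightarrow$ continuous compensator) to force $B\equiv 0$. You instead run a direct pathwise contradiction on $\{\Delta_{S}B>0\}$, splitting into $\{S<T\}$ and $\{S=T\}$ and using $\hat U^{i,n}_{S^{-}}\le \hat U^{i,n}_{S}\le \hat Y^{i,n}_{S}=\hat Y^{i,n}_{S^{-}}-\Delta_{S}B$. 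Your route is slightly more elementary in that it avoids citing the regularity/continuous-compensator equivalence; the paper's route is a little more robust in that it only needs $\hat U^{i,n}_{\tau^{-}}\le \hat U^{i,n}_{\tau}$ in expectation rather than pathwise. Both handle the terminal time via the inequality $\lim_{t\uparrow T}U^{i,n}_{t}\le U^{i,n}_{T}$ coming from Assumption~\ref{Assumption:SwitchingCosts}, exactly as you flag.
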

\begin{proof}
The proof is similar to the one in \cite{Djehiche2009}. By $\mathcal{F}_{t}$-measurability of the integral term, we have
\[
\hat{Y}^{i,0}_{t} \coloneqq Y^{i,0}_{t} + \int_{0}^{t}\psi_{i}(s){d}s  = \mathsf{E}\left[\int_{0}^{T}\psi_{i}(s){d}s + \Gamma_{i} \biggm\vert \mathcal{F}_{t}\right].
\]
Since $\psi_{i} \in \mathcal{M}^{2}$ and $\Gamma_{i} \in L^{2}$, the conditional expectation is well-defined and $\hat{Y}^{i,0}$ is a uniformly integrable martingale which we can take to be c\`{a}dl\`{a}g (Section \MakeUppercase{\romannumeral 2}.67 of \cite{Rogers2000a}). By Doob's maximal inequality it follows that $\hat{Y}^{i,0} \in \mathcal{S}^{2}$ and therefore $Y^{i,0}$. Since the filtration is assumed to be quasi-left-continuous, Proposition~\ref{Proposition:Optimal-Switching-qlcFiltrations} verifies that $\hat{Y}^{i,0} \in \mathcal{Q}$ and therefore $Y^{i,0} \in \mathcal{Q}$. Therefore, $Y^{i,n} \in \mathcal{Q} \cap \mathcal{S}^{2}$ for every $i \in \mathbb{I}$ when $n = 0$.

Now, suppose by an induction hypothesis on $n \ge 0$ that for all $i \in \mathbb{I}$, $Y^{i,n} \in \mathcal{Q} \cap \mathcal{S}^{2}$. We first show that $Y^{i,n+1} \in \mathcal{S}^{2}$. By the induction hypothesis on $Y^{i,n}$ and since $\gamma_{i,j} \in \mathcal{Q} \cap \mathcal{S}^{2}$ and $\psi_{i} \in \mathcal{M}^{2}$, we verify that $\hat{U}^{i,n+1} \in \mathcal{S}^{2}$. Therefore, by Proposition~\ref{Proposition:Optimal-Switching-SnellEnvelopeProperties}, $\hat{Y}^{i,n+1}$ is the Snell envelope of $\hat{U}^{i,n+1}$. It is then not difficult to show that $\hat{U}^{i,n+1} \in \mathcal{S}^{2} \implies \hat{Y}^{i,n+1} \in \mathcal{S}^{2}$ (also property 5 of Proposition~\ref{Proposition:Optimal-Switching-SnellEnvelopeProperties}). Since $\psi_{i} \in \mathcal{M}^{2}$ we conclude that $Y^{i,n+1} \in \mathcal{S}^{2}$.

We now show that $Y^{i,n+1} \in \mathcal{Q}$ by arguing similarly as in the proof of Proposition 1.4a in \cite{Hamadene2003}. First, recall that $Y^{i,n}$ is in $\mathcal{Q} \cap \mathcal{S}^{2}$ for every $i \in \mathbb{I}$ by the induction hypothesis, and $\gamma_{ij} \in \mathcal{Q} \cap \mathcal{S}^{2}$ for every $i,j \in \mathbb{I}$. This means the process $\left(\max\nolimits_{j \neq i}\left\lbrace -\gamma_{i,j}(t) + Y^{j,n}_{t} \right\rbrace\right)_{0 \le t \le T}$ is also in $\mathcal{Q} \cap \mathcal{S}^{2}$. Using $Y^{j,n}_{T} = \Gamma_{j}$, $\mathsf{P}$-a.s. and Assumption~\ref{Assumption:SwitchingCosts} on the switching costs, we also have
\[ 
\lim_{t \uparrow T}\left(\max_{j \neq i}\left\lbrace Y^{j,n}_{t} - \gamma_{i,j}(t) \right\rbrace\right) = \max_{j \neq i}\left\lbrace \Gamma_{j} -\gamma_{i,j}(T) \right\rbrace \le \Gamma_{i}.
\]
Thus $\hat{U}^{i,n+1}$ is quasi-left-continuous on $[0,T)$ and has a possible positive jump at time $T$.

Next, by Proposition~\ref{Proposition:Optimal-Switching-SnellEnvelopeProperties}, $\hat{Y}^{i,n+1}$ has a unique Meyer decomposition:
\[
\hat{Y}^{i,n+1} = M - A - B,
\]
where $M$ is a right-continuous, uniformly integrable martingale, and $A$ and $B$ are predictable, non-decreasing processes which are continuous and purely discontinuous respectively. Let $\tau \in \mathcal{T}$ be any predictable time. The process $A$ is continuous so $A_{\tau^{-}} = A_{\tau}$ holds almost surely. Moreover, the martingale $M$ also satisfies $M_{\tau} = M_{\tau^{-}}$ a.s. since, by Proposition~\ref{Proposition:Optimal-Switching-qlcFiltrations}, it is quasi-left-continuous. Predictable jumps in $\hat{Y}^{i,n+1}$ therefore come from $B$, and we need only consider the two events $\lbrace \triangle_{\tau}B > 0 \rbrace$ and $\lbrace \triangle_{\tau}B = 0 \rbrace$ since $B$ is non-decreasing.

By property 2 of Proposition~\ref{Proposition:Optimal-Switching-SnellEnvelopeProperties}, we have
\[
\lbrace \triangle_{\tau}B > 0 \rbrace \subset \lbrace \hat{Y}^{i,n+1}_{\tau^{-}} = \hat{U}^{i,n+1}_{\tau^{-}} \rbrace
\]
and, using the dominating property of $\hat{Y}^{i,n+1}$ and non-negativity of the predictable jumps of $\hat{U}^{i,n+1}$, this gives
\begin{equation}\label{eq:Optimal-Switching-JumpAtPredictableTime1}
\mathsf{E}\left[\hat{Y}^{i,n+1}_{\tau^{-}}\mathbf{1}_{\lbrace \triangle_{\tau}B > 0 \rbrace}\right] = \mathsf{E}\left[\hat{U}^{i,n+1}_{\tau^{-}}\mathbf{1}_{\lbrace \triangle_{\tau}B > 0 \rbrace}\right] \le \mathsf{E}\left[\hat{U}^{i,n+1}_{\tau}\mathbf{1}_{\lbrace \triangle_{\tau}B > 0 \rbrace}\right] \le \mathsf{E}\left[\hat{Y}^{i,n+1}_{\tau}\mathbf{1}_{\lbrace \triangle_{\tau}B > 0 \rbrace}\right]
\end{equation}

On the other hand, the Meyer decomposition of $\hat{Y}^{i,n+1}$ and the almost sure continuity of $M$ and $A$ at $\tau$ yield the following:
\begin{align}\label{eq:Optimal-Switching-JumpAtPredictableTime2}
\mathsf{E}\left[\hat{Y}^{i,n+1}_{\tau^{-}}\mathbf{1}_{\lbrace \triangle_{\tau}B = 0 \rbrace}\right] & = \mathsf{E}\left[\left(M_{\tau^{-}} - A_{\tau^{-}} - B_{\tau^{-}}\right)\mathbf{1}_{\lbrace \triangle_{\tau}B = 0 \rbrace}\right] \nonumber \\
& = \mathsf{E}\left[\left(M_{\tau} - A_{\tau} - B_{\tau}\right)\mathbf{1}_{\lbrace \triangle_{\tau}B = 0 \rbrace}\right] \nonumber \\
& = \mathsf{E}\left[\hat{Y}^{i,n+1}_{\tau}\mathbf{1}_{\lbrace \triangle_{\tau}B = 0 \rbrace}\right]
\end{align}

From \eqref{eq:Optimal-Switching-JumpAtPredictableTime1} and \eqref{eq:Optimal-Switching-JumpAtPredictableTime2} we get the inequality, $\mathsf{E}\big[\hat{Y}^{i,n+1}_{\tau^{-}}\big] \le \mathsf{E}\big[\hat{Y}^{i,n+1}_{\tau}\big]$. However, $\mathsf{E}\big[\hat{Y}^{i,n+1}_{\tau^{-}}\big] \ge \mathsf{E}\big[\hat{Y}^{i,n+1}_{\tau}\big]$ since $\hat{Y}^{i,n+1}$ is a right-continuous supermartingale (in $\mathcal{S}^{2}$) and $\tau$ is predictable (Theorem \MakeUppercase{\romannumeral 6}.14 of \cite{Dellacherie1982}). Thus $\mathsf{E}\big[\hat{Y}^{i,n+1}_{\tau^{-}}\big] = \mathsf{E}\big[\hat{Y}^{i,n+1}_{\tau}\big]$ for every predictable time $\tau$. This means $Y^{i,n+1}$ is a regular supermartingale (of class $[D]$) and, by Theorem \MakeUppercase{\romannumeral 7}.10 of \cite{Dellacherie1982}, the predictable non-decreasing component of the Meyer decomposition of $Y^{i,n+1}$ must be continuous. Therefore, $B \equiv 0$ and $Y^{i,n+1} \in \mathcal{Q}$ since the only jumps it experiences are those from the quasi-left-continuous martingale $M$.
 \end{proof}
\begin{lemma}\label{Lemma:OptimalSwitching-ConvergenceOfTheOptimalProcesses}
	For every $i \in \mathbb{I}$, the process $Y^{i,n}$ solves the optimal switching problem with at most $n \ge 0$ switches:
	\begin{equation}\label{eq:Optimal-Switching-AtMostnSwitches}
	Y^{i,n}_{t} = \esssup\limits_{\alpha \in \mathcal{A}^{n}_{t,i}}\mathsf{E}\left[\int_{t}^{T}\psi_{\mathbf{u}_{s}}(s){d}s + \Gamma_{\mathbf{u}_{T}} - \sum_{j = 1}^{n}\gamma_{\iota_{j-1},\iota_{j}}(\tau_{j})\mathbf{1}_{\{ \tau_{j} < T \}}\biggm\vert \mathcal{F}_{t}\right],\hspace{1em} t \in [0,T].
	\end{equation}
	Moreover, the sequence $\left\lbrace Y^{i,n}\right\rbrace_{n \ge 0}$ is increasing and converges pointwise $\mathsf{P}$-a.s. for any $0 \le t \le T$ to a c\`{a}dl\`{a}g process $\tilde{Y}^{i}$ satisfying: $\forall t \in [0,T]$,
	\begin{equation}\label{eq:Optimal-Switching-ConvergenceOfSwitchingProcess}
	\tilde{Y}^{i}_{t} = \esssup\limits_{\alpha \in \mathcal{A}_{t,i}}J\left(\alpha;t,i\right) \eqqcolon V(t,i) \quad \text{a.s.}
	\end{equation}
\end{lemma}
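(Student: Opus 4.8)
The plan is to prove the representation \eqref{eq:Optimal-Switching-AtMostnSwitches} by induction on $n$, and then obtain monotonicity, the identification $\tilde Y^i=V$, and finally the c\`adl\`ag property. When $n=0$ the only admissible strategy keeps $\mathbf u_s\equiv i$, so the right-hand side of \eqref{eq:Optimal-Switching-AtMostnSwitches} collapses to $\mathsf E[\int_t^T\psi_i(s)ds+\Gamma_i\mid\mathcal F_t]=Y^{i,0}_t$. For the inductive step, assume the representation at level $n-1$ for every mode. Given $\alpha=(\tau_k,\iota_k)\in\mathcal A^{n}_{t,i}$, the tail $(\tau_k,\iota_k)_{k\ge1}$ is an at-most-$(n-1)$-switch strategy started at $(\tau_1,\iota_1)$; the Snell-envelope representation of $\hat Y^{i,n}$ at $\tau_1$ gives $Y^{i,n}_t\ge\mathsf E[\int_t^{\tau_1}\psi_i(s)ds+U^{i,n}_{\tau_1}\mid\mathcal F_t]$, where on $\{\tau_1<T\}$ the obstacle dominates $Y^{\iota_1,n-1}_{\tau_1}-\gamma_{i,\iota_1}(\tau_1)$, and applying the induction hypothesis at $\tau_1$ together with the tower property yields $Y^{i,n}_t\ge J(\alpha;t,i)$. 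For the reverse inequality I would exhibit a maximiser by iterating the debut-time construction of Theorem~\ref{Theorem:Optimal-Switching-VerificationPartial} exactly $n$ times: the hypotheses of property~3 of Proposition~\ref{Proposition:Optimal-Switching-SnellEnvelopeProperties} hold because, by Lemma~\ref{Lemma:Optimal-Switching-FinitelyManySwitchesQLC_S2}, $\hat U^{i,n}$ is quasi-left-continuous on $[0,T)$ with $\lim_{t\uparrow T}\hat U^{i,n}_t\le\hat U^{i,n}_T$ (using $Y^{j,n-1}_T=\Gamma_j$ and Assumption~\ref{Assumption:SwitchingCosts}), so each debut time is optimal; capping at $n$ switches makes finiteness automatic, and the resulting $\alpha^*\in\mathcal A^{n}_{t,i}$ satisfies $J(\alpha^*;t,i)=Y^{i,n}_t$.

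Monotonicity is then immediate: since $\mathcal A^{n}_{t,i}\subset\mathcal A^{n+1}_{t,i}$, the representation just proved gives $Y^{i,n}_t\le Y^{i,n+1}_t$ a.s., so $\tilde Y^i_t:=\lim_n Y^{i,n}_t$ exists pointwise for each $t$. Because every $\alpha\in\mathcal A^{n}_{t,i}$ also lies in $\mathcal A_{t,i}$, the same representation yields $Y^{i,n}_t\le V(t,i)$ and hence $\tilde Y^i_t\le V(t,i)$.

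For the reverse bound I would use truncation. Fix $\alpha\in\mathcal A_{t,i}$; by \eqref{eq:Optimal-Switching-FiniteStrategy}, $N(\alpha)<\infty$ a.s., so the $n$-truncation $\alpha^{(n)}$ (stop switching after the $n$-th switch and remain in $\iota_n$ until $T$) lies in $\mathcal A^{n}_{t,i}$: its cumulative costs satisfy $\sup_k|C^{\alpha^{(n)}}_k|\le\sup_k|C^{\alpha}_k|$, so \eqref{eq:Optimal-Switching-UISupremumClosureSwitchingCosts} is inherited, and $\alpha^{(n)}=\alpha$ on $\{N(\alpha)\le n\}\uparrow\Omega$. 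The first claim gives $Y^{i,n}_t\ge J(\alpha^{(n)};t,i)$, and a conditional dominated convergence argument (cf.\ \eqref{eq:Optimal-Switching-ConvergenceOfSwitchingCost}, dominating the rewards by $\int_0^T\sum_k|\psi_k(s)|ds+\max_k|\Gamma_k|\in L^1$ and the costs by $\sup_k|C^{\alpha}_k|$) shows $J(\alpha^{(n)};t,i)\to J(\alpha;t,i)$. Letting $n\to\infty$ gives $\tilde Y^i_t\ge J(\alpha;t,i)$, and taking the essential supremum over $\alpha\in\mathcal A_{t,i}$ yields $\tilde Y^i_t=V(t,i)$ a.s.

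Finally, for the c\`adl\`ag property I would realise $\tilde Y^i$ as a monotone limit of Snell envelopes. Since $\mathbb I$ is finite the limits $\tilde Y^j$ exist simultaneously, so $\hat U^{i,n}\uparrow\hat U^i:=\int_0^{\cdot}\psi_i(s)ds+\Gamma_i\mathbf 1_{\{\cdot=T\}}+\max_{j\neq i}\{\tilde Y^j_{\cdot}-\gamma_{i,j}(\cdot)\}\mathbf 1_{\{\cdot<T\}}$ pointwise. Provided $\hat U^i$ is of class $[D]$, property~5 of Proposition~\ref{Proposition:Optimal-Switching-SnellEnvelopeProperties} shows the Snell envelopes $\hat Y^{i,n}$ increase to the Snell envelope of $\hat U^i$, which is c\`adl\`ag; subtracting the continuous integral term gives a c\`adl\`ag $\tilde Y^i$. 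The main obstacle is precisely this class-$[D]$ (indeed $\mathcal S^{2}$) control of the limit: with non-negative costs one bounds $Y^{i,n}$ above, uniformly in $n$, by the zero-cost value $\mathsf E[\int_t^T\psi_{\mathbf u_s}(s)ds+\Gamma_{\mathbf u_T}\mid\mathcal F_t]$, but signed costs destroy this estimate. I would therefore derive a uniform-in-$n$ bound on $\|Y^{i,n}\|_{\mathcal S^{2}}$ from the no-arbitrage structure of Assumption~\ref{Assumption:SwitchingCosts} together with the cumulative-cost control used in Theorem~\ref{Theorem:Optimal-Switching-VerificationPartial}, securing $\tilde Y^i\in\mathcal S^{2}$ and hence class $[D]$, which closes the argument.
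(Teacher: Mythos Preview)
Your inductive proof of the representation~\eqref{eq:Optimal-Switching-AtMostnSwitches}, the monotonicity argument via $\mathcal A^{n}_{t,i}\subset\mathcal A^{n+1}_{t,i}$, and the truncation argument for $\tilde Y^{i}_t\ge J(\alpha;t,i)$ are all essentially the same as the paper's proof, and are fine.

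The genuine gap is in your derivation of the c\`adl\`ag property. You want to apply property~5 of Proposition~\ref{Proposition:Optimal-Switching-SnellEnvelopeProperties}, but that property requires the limiting obstacle $\hat U^{i}$ to be of class~$[D]$, and you correctly identify that this is the crux with signed costs. However, your proposed remedy---extracting a uniform-in-$n$ $\mathcal S^{2}$ bound from Assumption~\ref{Assumption:SwitchingCosts} together with ``the cumulative-cost control used in Theorem~\ref{Theorem:Optimal-Switching-VerificationPartial}''---does not work at this stage of the paper. Theorem~\ref{Theorem:Optimal-Switching-VerificationPartial} and its appendix establish $L^{2}$ control of the switching costs for the candidate optimal strategy \emph{assuming} the processes $Y^{i}$ already exist in $\mathcal Q\cap\mathcal S^{2}$; invoking that here is circular. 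And the no-arbitrage inequalities in Assumption~\ref{Assumption:SwitchingCosts} by themselves do not prevent $-\sum\gamma_{\iota_{k-1},\iota_k}(\tau_k)$ from being unbounded over $\mathcal A^{n}_{t,i}$ as $n\to\infty$ when costs are signed. This is precisely why the paper introduces the additional Hypothesis~\textbf{(M)} in Section~\ref{Section:ArbitraryNumberOfSwitches} to obtain $\tilde Y^{i}\in\mathcal S^{2}$; that bound is \emph{not} part of the present lemma.

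The paper sidesteps the integrability issue entirely for the c\`adl\`ag claim: since each $\hat Y^{i,n}$ is a c\`adl\`ag supermartingale and the sequence is increasing, Theorem~\MakeUppercase{\romannumeral 6}.18 of Dellacherie--Meyer guarantees that the pointwise supremum $\hat Y^{i}$ is indistinguishable from a c\`adl\`ag process, \emph{without} any uniform integrability hypothesis (the paper explicitly notes that $\hat Y^{i}$ ``is not necessarily a supermartingale since we have not established its integrability''). Subtracting the continuous integral then gives c\`adl\`ag $\tilde Y^{i}$. You should use this route; the $\mathcal S^{2}$ and Snell-envelope identifications are deferred to Theorem~\ref{Theorem:Optimal-Switching-ExistenceVerification} under Hypothesis~\textbf{(M)}.
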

\begin{proof}
Let $t \in [0,T]$, $i \in \mathbb{I}$ be given and for $n \ge 0$ define $\mathcal{A}^{n}_{t,i}$ as the subset of admissible strategies with at most $n$ switches:
\[
\mathcal{A}^{n}_{t,i} = \left\lbrace \alpha \in \mathcal{A}_{t,i} \colon \tau_{n+1} = T,\hspace{2bp} \mathsf{P}-a.s. \right\rbrace
\]
Define a double sequence $\hat{\alpha}^{(n)} = \left(\hat{\tau}_{k},\hat{\iota}_{k}\right)^{n+1}_{k = 0}$ as follows
\begin{gather}\label{eq:Optimal-Switching-Optimal-Strategy-n-switches}
\hat{\tau}_{0} = t,\hspace{1em} \hat{\iota}_{0} = i, \nonumber \\
\begin{cases}
\hat{\tau}_{k} = \inf\left\lbrace s \ge \hat{\tau}_{k-1} \colon Y^{\hat{\iota}_{k-1},n-(k-1)}_{s} = \max\limits_{j \neq \hat{\iota}_{k-1}} \left(Y^{j,n-k}_{s} - \gamma_{\hat{\iota}_{k-1},j}\left(s\right) \right)  \right\rbrace \wedge T,\nonumber \\
\hat{\iota}_{k} = \sum\limits_{j \in \mathbb{I}} j \mathbf{1}_{F^{\hat{\iota}_{k-1}}_{j}}
\end{cases} \\
\text{for } k = 1,\ldots,n \text{ where } F^{\hat{\iota}_{k-1}}_{j} \text{ is the event} : \\
F^{\hat{\iota}_{k-1}}_{j} \coloneqq \left\lbrace Y^{j,n-k}_{\hat{\tau}_{k}} -\gamma_{\hat{\iota}_{k-1},j}\left(\hat{\tau}_{k}\right) = \max\limits_{\ell \neq \hat{\iota}_{k-1}}\left( Y^{\ell,n-k}_{\hat{\tau}_{k}} -\gamma_{\hat{\iota}_{k-1},\ell}\left(\hat{\tau}_{k}\right) \right) \right\rbrace, \nonumber
\end{gather}
and set $\hat{\tau}_{n+1} = T,~\hat{\iota}_{n+1}(\omega) = j \in \mathbb{I}$ with $j \neq \hat{\iota}_{n}(\omega)$. Since $Y^{i,n} \in \mathcal{Q} \cap \mathcal{S}^{2}$, one verifies that $\hat{\alpha}^{(n)} \in \mathcal{A}^{n}_{t,i}$ and, using the arguments of Theorem~\ref{Theorem:Optimal-Switching-VerificationPartial}, that $Y^{i,n}_{t} = J(\hat{\alpha}^{(n)};t,i)$ and has the representation \eqref{eq:Optimal-Switching-AtMostnSwitches}. Furthermore, since $\mathcal{A}^{n}_{t,i} \subset \mathcal{A}^{n+1}_{t,i} \subset \mathcal{A}_{t,i}$, it follows that $Y^{i,n}_{t}$ is non-decreasing in $n$ for all $t \in [0,T]$ and $Y^{i,n}_{t} \le Y^{i,n+1}_{t} \le V(t,i)$ almost surely. Recalling also the processes $\hat{U}^{i,n}$ and $\hat{Y}^{i,n}$ from Lemma~\ref{Lemma:Optimal-Switching-FinitelyManySwitchesQLC_S2} and that $\hat{Y}^{i,n}$ is the Snell envelope of $\hat{U}^{i,n}$ for each $n \ge 0$, we deduce $\lbrace \hat{Y}^{i,n} \rbrace_{n \ge 0}$ is an increasing sequence of c\`{a}dl\`{a}g supermartingales. Theorem \MakeUppercase{\romannumeral 6}.18 of \cite{Dellacherie1982} shows that this sequence converges to a limit $\hat{Y}^{i}$ defined pointwise on $[0,T]$ by
\begin{equation*}\label{eq:Continuous-Time-Optimal-Switching-Limiting-Snell-Envelope}
\hat{Y}^{i}_{t} \coloneqq \sup_{n}\hat{Y}^{i,n}_{t} = \sup_{n}\left(Y^{i,n}_{t} + \int_{0}^{t}\psi_{i}(s){d}s\right).
\end{equation*}
This random function $\hat{Y}^{i} = (\hat{Y}^{i}_{t})_{0 \le t \le T}$ is indistinguishable from a c\`{a}dl\`{a}g process, but is not necessarily a supermartingale since we have not established its integrability. Nevertheless, the sequence $\lbrace Y^{i,n} \rbrace_{n \ge 0}$ converges pointwise on $[0,T]$ to a limit $\tilde{Y}^{i}$ which, modulo indistinguishability, is a c\`{a}dl\`{a}g process given by
\begin{equation}\label{eq:Continuous-Time-Optimal-Switching-LimitingProcess}
\tilde{Y}^{i}_{t} = \sup_{n}Y^{i,n}_{t} = \hat{Y}^{i}_{t} - \int_{0}^{t}\psi_{i}(s){d}s.
\end{equation}

Next, let $\alpha = (\tau_{k},\iota_{k})_{k \ge 0} \in \mathcal{A}_{t,i}$ be arbitrary. By Remark~\ref{Remark:SubOptimalSwitchTwice}, we can restrict our attention to those strategies such that $\mathsf{P}\big( \lbrace \tau_{k} = \tau_{k+1}, \tau_{k} < T \rbrace) = 0$ for $k \ge 1$. Define $\alpha^{n} = (\tau^{n}_{k},\iota^{n}_{k})_{k \ge 0}$ to be the strategy obtained from $\alpha$ when only the first $n$ switches are kept:
\[
\begin{cases}
(\tau^{n}_{k},\iota^{n}_{k}) = (\tau_{k},\iota_{k}),\hspace{1em} k \le n, \\
\tau^{n}_{k} = T,\hspace{1em} k > n.
\end{cases}
\]
The difference between the performance indices under $\alpha$ and $\alpha^{n}$ is:
\begin{align*}
J(\alpha;t,i) - J(\alpha^{n};t,i) & = 
\mathsf{E}\left[\int_{\tau_{n}}^{T}\left(\psi_{\mathbf{u}_{s}}(s) - \psi_{\iota^{n}_{n}}(s)\right){d}s + \Gamma_{\mathbf{u}_{T}} - \Gamma_{\iota^{n}_{n}}
- \sum_{k > n}\gamma_{\iota_{k-1},\iota_{k}}(\tau_{k})\mathbf{1}_{\{\tau_{k} < T\}} \biggm\vert \mathcal{F}_{t}\right] \nonumber \\
& = \mathsf{E}\left[\int_{\tau_{n}}^{T}\left(\psi_{\mathbf{u}_{s}}(s) - \psi_{\iota^{n}_{n}}(s)\right){d}s + \Gamma_{\mathbf{u}_{T}} - \Gamma_{\iota^{n}_{n}}
- (C^{\alpha} - C^{\alpha}_{n}) \biggm\vert \mathcal{F}_{t}\right]
\end{align*}
where $\mathbf{u}$ is the mode indicator function associated with $\alpha$ and $\iota^{n}_{n} = \iota_{n \wedge N(\alpha)}$ is the last mode switched to before $T$ under $\alpha^{n}$. Since $\alpha \in \mathcal{A}_{t,i}$, $\psi_{i} \in \mathcal{M}^{2}$ and $\Gamma_{i} \in L^{2}$ for every $i \in \mathbb{I}$, the conditional expectation above is well-defined for every $n \ge 1$. This also leads to an integrable upper bound for $J(\alpha;t,i)$,
\begin{equation}\label{eq:Optimal-Switching-DifferenceBetweenPerformanceIndices2}
\begin{split}
J(\alpha;t,i) \le {} & \mathsf{E}\left[\left(\int_{\tau_{n}}^{T}\left|\psi_{\mathbf{u}_{s}}(s) - \psi_{\iota^{n}_{n}}(s)\right|{d}s + |\Gamma_{\mathbf{u}_{T}} - \Gamma_{\iota^{n}_{n}}| + \left|C^{\alpha} - C^{\alpha}_{n}\right|\right)\mathbf{1}_{\lbrace N(\alpha) > n \rbrace} \biggm\vert \mathcal{F}_{t}\right]\\
& + J(\alpha^{n};t,i)
\end{split}
\end{equation}

Using these integrability conditions again together with the observation that $N(\alpha) < \infty$ $\mathsf{P}$-a.s. and $\lbrace \tau_{k} \rbrace$ is (strictly) increasing towards $T$, we may pass to the limit $n \to \infty$ in equation~\eqref{eq:Optimal-Switching-DifferenceBetweenPerformanceIndices2} to get,
\begin{equation}\label{eq:Optimal-Switching-DifferenceBetweenPerformanceIndices3}
J(\alpha;t,i) \le \lim\limits_{n \to \infty}J(\alpha^{n};t,i) \quad \text{a.s.}
\end{equation}
However, as $\alpha^{n} \in \mathcal{A}^{n}_{t,i}$ for each $n \ge 0$, from \eqref{eq:Optimal-Switching-DifferenceBetweenPerformanceIndices3} and \eqref{eq:Continuous-Time-Optimal-Switching-LimitingProcess} we get for every $t \in [0,T]$:
\[
J(\alpha;t,i) \le \lim\limits_{n \to \infty}J(\alpha^{n};t,i) \le \lim\limits_{n \to \infty}Y^{i,n}_{t} = \tilde{Y}^{i}_{t} \quad \text{a.s.}
\]
Since $\alpha \in \mathcal{A}_{t,i}$ was arbitrary, we have just shown for every $t \in [0,T]$
\[
V(t,i) \coloneqq \esssup\limits_{\alpha \in \mathcal{A}_{t,i}}J\left(\alpha;t,i\right) \le \tilde{Y}^{i}_{t}\quad \text{a.s.}
\]
The reverse inequality holds since $Y^{i,n}_{t} = J(\hat{\alpha}^{(n)};t,i) \le V(t,i)$ almost surely for $n \ge 0$ (cf. \eqref{eq:Optimal-Switching-Optimal-Strategy-n-switches}) and $\tilde{Y}^{i}$ is the pointwise supremum of the sequence $\{Y^{i,n}\}_{n \ge 0}$.
 \end{proof}
\subsection{The case of an arbitrary number of switches.}\label{Section:ArbitraryNumberOfSwitches}
This section gives sufficient conditions under which the limiting processes $\tilde{Y}^{1},\ldots,\tilde{Y}^{m}$ satisfy the verification theorem~\ref{Theorem:Optimal-Switching-VerificationPartial}. The main difficulty is in proving that $\tilde{Y}^{i} \in \mathcal{S}^{2}$, and in order to achieve this we make the following hypothesis.
\begin{quote}
	\vskip1em
	\textbf{(M)} There exists a family of martingales $\{M_{ij} = (M_{ij})_{0 \le t \le T} \colon i,j \in \mathbb{I}\}$ such that for every $i,j,k \in \mathbb{I}$:
	\begin{align*}
	i. \quad & M_{i,j} \in \mathcal{S}^{2}  \nonumber \\
	ii. \quad & -\gamma_{i,j}(\cdot) \le M_{i,j}(\cdot), \enskip \mathsf{P}-\text{a.s.} \enskip \text{ if } i \neq j \nonumber \\
	iii. \quad & M_{i,j}(\cdot) + M_{j,k}(\cdot) \le M_{i,k}(\cdot), \enskip \mathsf{P}-\text{a.s.} \enskip \text{ if } i \neq j \text{ and } j \neq k.
	\end{align*}
\end{quote}
This hypothesis can be verified in the following cases:
\begin{itemize}
	\item The switching costs are martingales -- since we can set $M_{i,j} = -\gamma_{i,j}$ (with strict inequality in property iii. above). This includes the case $\gamma_{i,j}(t) = \gamma_{i,j}$, $t \in [0,T]$, with $\gamma_{i,j} \in L^{2}$ and $\mathcal{F}_{0}$-measurable;
	\item The switching costs are non-negative -- since we can set $M_{i,j} \equiv 0$ for $i,j \in \mathbb{I}$;
	\item There are two modes ($\mathbb{I} = \{0,1\}$ as per convention). For $i \in \{0,1\}$ and $j = 1-i$, let $Z_{i,j}$ denote the Snell envelope of $(-\gamma_{i,j}(t))_{0 \le t \le T}$, which exists and is in $\mathcal{S}^{2}$ since $\gamma_{i,j} \in \mathcal{S}^{2}$ (see Proposition~\ref{Proposition:Optimal-Switching-SnellEnvelopeProperties}). We may then take $M_{i,j}$ to be the martingale component in the Doob-Meyer decomposition of $Z_{i,j}$, and set $M_{0,0} = M_{1,1} = M_{0,1} + M_{1,0}$. This case includes many examples of \emph{Dynkin games} (see \cite{Martyr2014c}).
\end{itemize}

\begin{lemma}\label{Lemma:Martingale-Hypothesis-Lemma}
	Assume Hypothesis \textbf{(M)}, then $\forall \alpha = (\tau_{n},\iota_{n})_{n \ge 0} \in \mathcal{A}_{t,i}$, $(t,i) \in [0,T] \times \mathbb{I}$:
	\begin{equation}\label{eq:Optimal-Switching-Constant-Upper-Limit}
	\forall N \ge 1, \quad \mathsf{E}\left[-\sum_{n = 1}^{N}\gamma_{\iota_{n-1},\iota_{n}}(\tau_{n}) \biggm\vert \mathcal{F}_{t}\right] \le \mathsf{E}\left[\max_{j_{1},j_{2} \in \mathbb{I}}|M_{j_{1},j_{2}}(T)| \biggm \vert \mathcal{F}_{t}\right] \hspace{1em}\mathsf{P}-\text{a.s.}
	\end{equation}
\end{lemma}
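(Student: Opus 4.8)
The plan is to dominate the signed switching costs by the martingales supplied by Hypothesis \textbf{(M)} and then to exploit two features of the family $\{M_{i,j}\}$: the martingale property, which lets me move each term to a common time by optional sampling, and the subadditivity (iii), which lets me collapse a chain of transitions into a single term. Since the right-hand side is independent of $N$, the whole point is to produce a telescoped bound that is uniform in the number of switches.

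First I would use property (ii) together with the admissibility requirement $\iota_{n-1}\neq\iota_n$ to write, for each $n$, $-\gamma_{\iota_{n-1},\iota_n}(\tau_n)\le M_{\iota_{n-1},\iota_n}(\tau_n)$ $\mathsf{P}$-a.s., so that $\mathsf{E}[-\sum_{n=1}^{N}\gamma_{\iota_{n-1},\iota_n}(\tau_n)\mid\mathcal{F}_t]\le\mathsf{E}[\sum_{n=1}^{N}M_{\iota_{n-1},\iota_n}(\tau_n)\mid\mathcal{F}_t]$, with the convention of Remark~\ref{Remark:Optimal-Switching-SubscriptModeIndicators} in force and each indicator $\mathbf{1}_{\{\iota_{n-1}=j,\iota_n=k\}}$ being $\mathcal{F}_{\tau_n}$-measurable. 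Because every $M_{i,j}\in\mathcal{S}^2$ is a uniformly integrable martingale, optional sampling gives $M_{i,j}(\tau_n)=\mathsf{E}[M_{i,j}(T)\mid\mathcal{F}_{\tau_n}]$; pulling the $\mathcal{F}_{\tau_n}$-measurable indicators inside and using the tower property (note $\mathcal{F}_t\subset\mathcal{F}_{\tau_n}$ since $\tau_n\in\mathcal{T}_t$) yields $\mathsf{E}[M_{\iota_{n-1},\iota_n}(\tau_n)\mid\mathcal{F}_t]=\mathsf{E}[M_{\iota_{n-1},\iota_n}(T)\mid\mathcal{F}_t]$. This removes the obstruction that the successive transitions occur at different random times, reducing everything to the single time $T$.

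The core step is then to telescope $\sum_{n=1}^{N}M_{\iota_{n-1},\iota_n}$ down to a single martingale term using property (iii): applied along the chain $\iota_0,\iota_1,\dots,\iota_N$ it collapses $M_{\iota_0,\iota_1}+M_{\iota_1,\iota_2}$ into $M_{\iota_0,\iota_2}$, and so on, the hypothesis $\iota_{n-1}\neq\iota_n$ guaranteeing the intermediate index is always admissible. Once the chain is reduced to a single term $M_{a,b}$ I would bound it by $\max_{j_1,j_2}|M_{j_1,j_2}|$, and a final use of conditional Jensen, $|M_{j_1,j_2}(t)|\le\mathsf{E}[|M_{j_1,j_2}(T)|\mid\mathcal{F}_t]$, delivers the stated right-hand side evaluated at $T$.

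The hard part will be that a general admissible strategy may return to a previously occupied mode, so that the greedy telescoping produces a diagonal term $M_{i,i}$ at which property (iii) can no longer be applied (it requires strictly distinct consecutive indices) and which is only bounded \emph{below}, not above, by the family. This is precisely where the standing no-arbitrage Assumption~\ref{Assumption:SwitchingCosts} must intervene rather than the crude martingale domination alone: the identities $\gamma_{i,i}=0$ and $\gamma_{i,j}+\gamma_{j,i}>0$ make the \emph{cost} telescoping over each closed excursion exact and non-positive, so that revisits can only decrease $-\sum\gamma$. I would therefore organise an induction on $N$ in which each excursion returning to the current mode is reduced at the level of the switching costs (where $\gamma_{i,i}=0$ closes the loop with equality), leaving only a self-avoiding chain to be handled by the martingale telescoping above; this is what keeps the final constant at $\max_{j_1,j_2}|M_{j_1,j_2}(T)|$ rather than an $N$-dependent or merely doubled multiple of it.
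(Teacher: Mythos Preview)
Your first two moves—dominating each $-\gamma_{\iota_{n-1},\iota_n}(\tau_n)$ by $M_{\iota_{n-1},\iota_n}(\tau_n)$ via property~(ii), and then using optional sampling with the $\mathcal{F}_{\tau_n}$-measurability of the mode indicators to replace $\tau_n$ by $T$ inside $\mathsf{E}[\,\cdot\mid\mathcal{F}_t]$—are exactly what the paper does. The divergence is in how the resulting sum $\sum_{n=1}^{N} M_{\iota_{n-1},\iota_n}(T)$ is controlled.

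Your proposed fix for the revisiting problem does not work. You want to remove each closed excursion at the level of the switching costs, invoking $\gamma_{i,i}=0$ and $\gamma_{i,j}(t)+\gamma_{j,i}(t)>0$ from Assumption~\ref{Assumption:SwitchingCosts}. But that inequality is \emph{pointwise in $t$}, whereas an excursion $i\to j\to i$ in an admissible strategy incurs the two costs at distinct random times $\tau_n<\tau_{n+1}$, and nothing in the assumptions forces $\gamma_{i,j}(\tau_n)+\gamma_{j,i}(\tau_{n+1})>0$. (A toy illustration: on $[0,1]$ take $\gamma_{i,j}(t)=1-10t$ and $\gamma_{j,i}(t)=10t$; then $\gamma_{i,j}(t)+\gamma_{j,i}(t)=1>0$ for every $t$, yet $\gamma_{i,j}(0.9)+\gamma_{j,i}(0.1)=-7$.) So you cannot discard loops ``for free'' at the $\gamma$ level before passing to the martingales, and once you have passed to $\sum M_{\iota_{n-1},\iota_n}(T)$ there is no way back.

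The paper's argument stays entirely at the $M$ level and never attempts a full left-to-right telescoping. It proves the pointwise bound $\sum_{n=1}^{N}M_{\iota_{n-1},\iota_n}(T)\le\max_{j_1,j_2}|M_{j_1,j_2}(T)|$ by induction on $N$: at the inductive step one applies property~(iii) only to the \emph{last} pair $M_{\iota_{N-1},\iota_N}(T)+M_{\iota_N,\iota_{N+1}}(T)\le M_{\iota_{N-1},\iota_{N+1}}(T)$, which is always legitimate because admissibility gives $\iota_{N-1}\neq\iota_N$ and $\iota_N\neq\iota_{N+1}$; the remaining $N$-term sum is then viewed as that of a shortened strategy $\tilde\alpha$ (obtained by deleting the $N$-th switch and shifting the tail down by one) to which the induction hypothesis applies. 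No diagonal term ever has to be bounded from above, and Assumption~\ref{Assumption:SwitchingCosts} is not invoked at all in this step.
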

\begin{proof}
Let $\alpha = (\tau_{n},\iota_{n})_{n \ge 0} \in \mathcal{A}_{t,i}$ be arbitrary. For $n \ge 1$ and $i,j \in \mathbb{I}$ we have $\tau_{n} \le T$, $\mathbf{1}_{\{\iota_{n-1} = i\}}\mathbf{1}_{\{\iota_{n} = j\}}$ is non-negative and $\mathcal{F}_{\tau_{n}}$-measurable, $M_{ij} \in \mathcal{S}^{2}$ is a martingale with $-\gamma_{i,j}(\cdot) \le M_{i,j}(\cdot)$ for $i \neq j$. We can therefore show for $N \ge 1$:
\[
\mathsf{E}\left[-\sum_{n = 1}^{N}\gamma_{\iota_{n-1},\iota_{n}}(\tau_{n}) \biggm\vert \mathcal{F}_{t}\right] \le \mathsf{E}\left[\sum_{n = 1}^{N}M_{\iota_{n-1},\iota_{n}}(\tau_{n}) \biggm\vert \mathcal{F}_{t}\right] = \mathsf{E}\left[\sum_{n = 1}^{N} M_{\iota_{n-1},\iota_{n}}(T) \biggm\vert \mathcal{F}_{t}\right]
\]
The proof can be completed by showing
\begin{equation}\label{eq:Optimal-Switching-Constant-Upper-Limit-Proof}
\forall N \ge 1, \quad \sum_{n = 1}^{N} M_{\iota_{n-1},\iota_{n}}(T) \le \max_{j_{1},j_{2} \in \mathbb{I}}|M_{j_{1},j_{2}}(T)| \hspace{1em}\mathsf{P}-\text{a.s.}
\end{equation}
and concluding by arbitrariness of $\alpha$. The inequality \eqref{eq:Optimal-Switching-Constant-Upper-Limit-Proof} shall be proved via induction similarly to \cite[p.~399]{LyVath2007}. First note that \eqref{eq:Optimal-Switching-Constant-Upper-Limit-Proof} is true for $N = 1$. Now, suppose that \eqref{eq:Optimal-Switching-Constant-Upper-Limit-Proof} is satisfied for $N \ge 1$. Since $M_{\iota_{N-1},\iota_{N}}(T) + M_{\iota_{N},\iota_{N+1}}(T) \le M_{\iota_{N-1},\iota_{N+1}}(T)$ a.s. we have
\[
\sum_{n = 1}^{N+1}M_{\iota_{n-1},\iota_{n}}(T) \le \sum_{n = 1}^{N-1}M_{\iota_{n-1},\iota_{n}}(T) + M_{\iota_{N-1},\iota_{N+1}}(T) \enskip\mathsf{P}-\text{a.s.}
\]
Define a new strategy $\tilde{\alpha} = (\tilde{\tau}_{n},\tilde{\iota}_{n})_{n \ge 0} \in \mathcal{A}_{t,i}$ by $(\tilde{\tau}_{n},\tilde{\iota}_{n}) = (\tau_{n},\iota_{n})$ for $n = 1,\ldots,N-1$ and $(\tilde{\tau}_{n},\tilde{\iota}_{n}) = (\tau_{n+1},\iota_{n+1})$ for $n \ge N$. Then, using the induction hypothesis on $\tilde{\alpha}$, one gets
\[
\sum_{n = 1}^{N+1}M_{\iota_{n-1},\iota_{n}}(T) \le \sum_{n = 1}^{N}M_{\tilde{\iota}_{n-1},\tilde{\iota}_{n}}(T) \le \max_{j_{1},j_{2} \in \mathbb{I}}|M_{j_{1},j_{2}}(T)| \enskip\mathsf{P}-\text{a.s.}
\]
 \end{proof}

\begin{theorem}[Existence]\label{Theorem:Optimal-Switching-ExistenceVerification}
	Suppose Hypothesis \textbf{(M)}. Then the limit processes $\tilde{Y}^{1},\ldots,\tilde{Y}^{m}$ of Lemma~\ref{Lemma:OptimalSwitching-ConvergenceOfTheOptimalProcesses} satisfy the following: for $i \in \mathbb{I}$,
	\begin{enumerate}
		\item $\tilde{Y}^{i} \in \mathcal{Q} \cap \mathcal{S}^{2}$.
		\item For any $0 \le t \le T$,
		\begin{equation}\label{eq:Optimal-Switching-LimitingSnellEnvelope}
		\begin{split}
		\tilde{Y}^{i}_{t} & = \esssup\limits_{\tau \ge t}\mathsf{E}\left[\int_{t}^{\tau}\psi_{i}(s){d}s + \Gamma_{i}\mathbf{1}_{\{\tau = T \}} + \max_{j \neq i}\left\lbrace \tilde{Y}^{j}_{\tau} - \gamma_{i,j}(\tau) \right\rbrace\mathbf{1}_{\{\tau < T \}}\biggm\vert \mathcal{F}_{t}\right],\\
		\tilde{Y}^{i}_{T} & = \Gamma_{i}.
		\end{split}
		\end{equation}
	\end{enumerate}
	In particular, $\tilde{Y}^{1},\ldots,\tilde{Y}^{m}$ are unique and satisfy the verification theorem.
\end{theorem}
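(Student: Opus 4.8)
The plan is to establish the two assertions in the order stated, with the genuine work lying in part~1, since part~2 and the uniqueness/verification claim follow by passing limits in known characterisations. First I would prove $\tilde{Y}^{i} \in \mathcal{S}^{2}$. From Lemma~\ref{Lemma:OptimalSwitching-ConvergenceOfTheOptimalProcesses} we already know $\tilde{Y}^{i}_{t} = V(t,i) = \esssup_{\alpha}J(\alpha;t,i)$, and each $Y^{i,n} \in \mathcal{Q}\cap\mathcal{S}^{2}$ with $Y^{i,n}_{t}\uparrow\tilde{Y}^{i}_{t}$. The key is a two-sided bound uniform in $n$. For a lower bound on $\tilde{Y}^{i}_{t}$ one compares against the no-switch strategy, giving $\tilde{Y}^{i}_{t}\ge Y^{i,0}_{t}=\mathsf{E}[\int_{t}^{T}\psi_{i}(s)ds+\Gamma_{i}\mid\mathcal{F}_{t}]$, which is in $\mathcal{S}^{2}$ by the running-reward and terminal-data assumptions. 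For the upper bound, I would use Hypothesis~\textbf{(M)} through Lemma~\ref{Lemma:Martingale-Hypothesis-Lemma}: for any $\alpha\in\mathcal{A}_{t,i}$, the switching-cost contribution to $J(\alpha;t,i)$ is dominated by $\mathsf{E}[\max_{j_1,j_2}|M_{j_1,j_2}(T)|\mid\mathcal{F}_{t}]$, so that
\begin{equation*}
J(\alpha;t,i)\le \mathsf{E}\Big[\int_{t}^{T}|\psi_{\mathbf{u}_s}(s)|\,ds+\max_{k}|\Gamma_{k}|+\max_{j_1,j_2}|M_{j_1,j_2}(T)|\,\Big|\,\mathcal{F}_{t}\Big].
\end{equation*}
Taking the essential supremum over $\alpha$ yields the same bound for $\tilde{Y}^{i}_{t}$.

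The point of the bound is that its right-hand side is the conditional expectation of a single $L^{2}$ random variable, hence (taking a c\`{a}dl\`{a}g modification) a martingale in $\mathcal{S}^{2}$ by Doob's maximal inequality. Combining this upper bound with the $\mathcal{S}^{2}$ lower bound sandwiches $\sup_{0\le t\le T}|\tilde{Y}^{i}_{t}|$ between the suprema of two $\mathcal{S}^{2}$ processes, so $\tilde{Y}^{i}\in\mathcal{S}^{2}$. This is the step I expect to be the main obstacle: without Hypothesis~\textbf{(M)} the cumulative switching costs can be unboundedly negative as the number of switches grows, and the monotone limit $\tilde{Y}^{i}$ could fail to be square-integrable; the whole role of \textbf{(M)} is to furnish a single integrable dominating random variable uniform over all admissible strategies, and it is precisely this uniformity that must be exploited.

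Having secured $\tilde{Y}^{i}\in\mathcal{S}^{2}$, the quasi-left-continuity $\tilde{Y}^{i}\in\mathcal{Q}$ follows by repeating the argument already used for $Y^{i,n+1}$ in the proof of Lemma~\ref{Lemma:Optimal-Switching-FinitelyManySwitchesQLC_S2}. Concretely, I would identify $\tilde{Y}^{i}_{t}+\int_{0}^{t}\psi_{i}(s)ds$ as the Snell envelope of the limiting obstacle and invoke property~5 of Proposition~\ref{Proposition:Optimal-Switching-SnellEnvelopeProperties}: since $\hat{U}^{i,n}\uparrow$ to the obstacle built from $\tilde{Y}^{j}$ and each $\hat{Y}^{i,n}$ is its Snell envelope, the limit $\hat{Y}^{i}=\sup_{n}\hat{Y}^{i,n}$ is the Snell envelope of the limiting obstacle, and being now known to be in $\mathcal{S}^{2}$ it is a regular class~$[D]$ supermartingale. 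The predictable-time argument of Lemma~\ref{Lemma:Optimal-Switching-FinitelyManySwitchesQLC_S2}, decomposing $\hat{Y}^{i}=M-A-B$ and showing $\mathsf{E}[\hat{Y}^{i}_{\tau^{-}}]=\mathsf{E}[\hat{Y}^{i}_{\tau}]$ for every predictable $\tau$, then forces $B\equiv 0$ via Theorem~VII.10 of \cite{Dellacherie1982}, whence $\tilde{Y}^{i}\in\mathcal{Q}$.

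Part~2 is then a limit passage in the Snell-envelope characterisation. Since $\hat{Y}^{i}$ is the Snell envelope of the limiting obstacle $\int_{0}^{t}\psi_{i}ds+\Gamma_{i}\mathbf{1}_{\{t=T\}}+\max_{j\neq i}\{\tilde{Y}^{j}_{t}-\gamma_{i,j}(t)\}\mathbf{1}_{\{t<T\}}$, property~1 of Proposition~\ref{Proposition:Optimal-Switching-SnellEnvelopeProperties} gives exactly \eqref{eq:Optimal-Switching-LimitingSnellEnvelope} after subtracting $\int_{0}^{t}\psi_{i}ds$, with terminal value $\tilde{Y}^{i}_{T}=\Gamma_{i}$ since $Y^{i,n}_{T}=\Gamma_{i}$ for all $n$. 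Finally, uniqueness and the verification claim: the processes $\tilde{Y}^{1},\dots,\tilde{Y}^{m}$ now lie in $\mathcal{Q}\cap\mathcal{S}^{2}$ and solve system~\eqref{eq:Optimal-Switching-OptimalSwitchingProcessesDefinition}, so they satisfy the hypotheses of Theorem~\ref{Theorem:Optimal-Switching-VerificationPartial}; applying that theorem shows $\tilde{Y}^{i}_{t}=V(t,i)$, which coincides with the value function already identified in Lemma~\ref{Lemma:OptimalSwitching-ConvergenceOfTheOptimalProcesses}, and this identification pins the solution down uniquely up to indistinguishability.
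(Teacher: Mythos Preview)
Your argument for $\tilde{Y}^{i}\in\mathcal{S}^{2}$ and for the Snell-envelope identity~\eqref{eq:Optimal-Switching-LimitingSnellEnvelope} is essentially the paper's: a uniform two-sided bound via Hypothesis~\textbf{(M)} and Lemma~\ref{Lemma:Martingale-Hypothesis-Lemma}, followed by property~5 of Proposition~\ref{Proposition:Optimal-Switching-SnellEnvelopeProperties} for the monotone limit of Snell envelopes. That part is fine.

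The gap is in your quasi-left-continuity step. You propose to ``repeat the argument already used for $Y^{i,n+1}$'' in Lemma~\ref{Lemma:Optimal-Switching-FinitelyManySwitchesQLC_S2}, namely to establish regularity $\mathsf{E}[\hat{Y}^{i}_{\tau^{-}}]=\mathsf{E}[\hat{Y}^{i}_{\tau}]$ at predictable $\tau$ and deduce $B\equiv 0$. But look at how that argument actually runs: on $\{\triangle_{\tau}B>0\}$ one has $\hat{Y}^{i}_{\tau^{-}}=\hat{U}^{i}_{\tau^{-}}$, and the crucial inequality $\hat{U}^{i}_{\tau^{-}}\le \hat{U}^{i}_{\tau}$ (equation~\eqref{eq:Optimal-Switching-JumpAtPredictableTime1} in Lemma~\ref{Lemma:Optimal-Switching-FinitelyManySwitchesQLC_S2}) relies on the obstacle having non-negative predictable jumps. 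In the inductive setting the obstacle is built from $Y^{j,n}$, already known to lie in $\mathcal{Q}$ by the induction hypothesis. For the limit, the obstacle $\hat{U}^{i}$ is built from $\tilde{Y}^{j}$, whose quasi-left-continuity is precisely what you are trying to prove. Worse, from the Meyer decomposition $\hat{Y}^{j}=M^{j}-A^{j}-B^{j}$ the predictable jumps of $\tilde{Y}^{j}$ are $-\triangle B^{j}\le 0$, so a priori the obstacle $\hat{U}^{i}$ could have \emph{negative} predictable jumps, and the inequality chain collapses. The argument is circular.

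The paper breaks this circularity by a different device that you have not mentioned: it exploits the \emph{interconnected} structure of the obstacles together with the strict triangle inequality in Assumption~\ref{Assumption:SwitchingCosts}. Suppose $\triangle_{\tau}B>0$ for the decomposition of $\hat{Y}^{i}$. Then $\tilde{Y}^{i}_{\tau}<\tilde{Y}^{i}_{\tau^{-}}=\max_{j\neq i}\{\tilde{Y}^{j}_{\tau^{-}}-\gamma_{i,j}(\tau^{-})\}$, and since $\tilde{Y}^{i}$ dominates the obstacle, the maximiser $\tilde{Y}^{j^{*}}$ must itself jump (negatively) at $\tau$. Repeating the reasoning for $\tilde{Y}^{j^{*}}$ produces a further index $l^{*}\neq j^{*}$, and combining the two equalities with quasi-left-continuity of the switching costs yields
\[
\tilde{Y}^{i}_{\tau^{-}}=-\gamma_{i,j^{*}}(\tau)-\gamma_{j^{*},l^{*}}(\tau)+\tilde{Y}^{l^{*}}_{\tau^{-}}<-\gamma_{i,l^{*}}(\tau^{-})+\tilde{Y}^{l^{*}}_{\tau^{-}},
\]
contradicting optimality of $j^{*}$. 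Hence $\triangle_{\tau}B=0$ a.s.\ for every predictable $\tau$, simultaneously for all $i$. This contradiction argument, not the regularity argument of Lemma~\ref{Lemma:Optimal-Switching-FinitelyManySwitchesQLC_S2}, is what you need here.
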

\begin{proof}
Recall the limit processes $\hat{Y}^{1},\ldots,\hat{Y}^{m}$ and $\tilde{Y}^{1},\ldots,\tilde{Y}^{m}$ from Lemma~\ref{Lemma:Optimal-Switching-FinitelyManySwitchesQLC_S2}, equation~\eqref{eq:Continuous-Time-Optimal-Switching-LimitingProcess}. Under Hypothesis \textbf{(M)} one verifies directly using Lemma~\ref{Lemma:Martingale-Hypothesis-Lemma} and the arguments in Lemma~\ref{Lemma:OptimalSwitching-ConvergenceOfTheOptimalProcesses} that the $\mathbb{F}$-martingale $\zeta = (\zeta_{t})_{0 \le t \le T}$ defined by
\begin{equation}\label{eq:Optimal-Switching-BoundOnMartingale}
\zeta_{t} \coloneqq \mathsf{E}\left[\int_{0}^{T}\max_{j \in \mathbb{I}}\left|\psi_{j}(s)\right|{d}s + \max_{j \in \mathbb{I}}|\Gamma_{j}| + \max_{j_{1},j_{2} \in \mathbb{I}}|M_{j_{1},j_{2}}(T)| \biggm \vert \mathcal{F}_{t}\right]
\end{equation}
satisfies $\zeta \in \mathcal{S}^{2}$ and $\forall n \ge 0$, $|\hat{Y}^{i,n}_{t}| \le \zeta_{t}$ $\mathsf{P}$-a.s. for every $t \in [0,T]$. Moreover, since $\hat{Y}^{i}$ is the pointwise supremum of $\lbrace \hat{Y}^{i,n} \rbrace_{n \ge 0}$ we also have $\hat{Y}^{i}_{t} \le \zeta_{t}$ for each $t \in [0,T]$. These observations give $-\zeta_{t} \le \hat{Y}^{i}_{t} \le \zeta_{t}$ $\mathsf{P}-\text{a.s. } \forall~ t \in [0,T]$. Since $\zeta \in \mathcal{S}^{2}$, it follows that $\hat{Y}^{i} \in \mathcal{S}^{2}$ and also $\tilde{Y}^{i} \in \mathcal{S}^{2}$ since $\psi_{i} \in \mathcal{M}^{2}$.

Now define a process $\hat{U}^{i} = (\hat{U}^{i}_{t})_{0 \le t \le T}$ for $i = 1,\ldots,m$ similarly to $\hat{U}^{i,n}$ used in Lemma~\ref{Lemma:Optimal-Switching-FinitelyManySwitchesQLC_S2}:
\begin{align*}
\hat{U}^{i}_{t} \coloneqq \int_{0}^{t}\psi_{i}(s){d}s + \Gamma_{i}\mathbf{1}_{\{ t = T \}} + \max_{j \neq i}\left\lbrace \tilde{Y}^{j}_{t} - \gamma_{i,j}(t) \right\rbrace\mathbf{1}_{\{ t < T \}}
\end{align*}
The $\mathcal{S}^{2}$ processes $\hat{Y}^{i}$ and $\hat{U}^{i}$ are the respective limits of the increasing sequences of c\`{a}dl\`{a}g $\mathcal{S}^{2}$ processes $\lbrace \hat{Y}^{i,n} \rbrace_{n \ge 0}$ and $\lbrace \hat{U}^{i,n} \rbrace_{n \ge 0}$. Since $\hat{Y}^{i,n}$ is also the Snell envelope of $\hat{U}^{i,n}$, property 5 of Proposition~\ref{Proposition:Optimal-Switching-SnellEnvelopeProperties} verifies that $\hat{Y}^{i}$ is the Snell envelope of $\hat{U}^{i}$. This leads to equation~\eqref{eq:Optimal-Switching-LimitingSnellEnvelope} for $\tilde{Y}^{i}$ and the uniqueness claim.

The final part is to show that $\tilde{Y}^{i} \in \mathcal{Q}$. Let $\tau \in \mathcal{T}$ be any predictable time. Since $\hat{Y}^{i}$ is the Snell envelope of $\hat{U}^{i}$, it has a Meyer decomposition (cf. Proposition~\ref{Proposition:Optimal-Switching-SnellEnvelopeProperties})
\[
\hat{Y}^{i} = M - A - B,
\]
where $M$ is a uniformly integrable c\`{a}dl\`{a}g martingale and $A$ (resp. $B$) is non-decreasing, predictable and continuous (resp. discontinuous). Remember that $M$ is also quasi-left-continuous due to Assumption~\ref{Assumption:QLCFiltration} and Proposition~\ref{Proposition:Optimal-Switching-qlcFiltrations}. We therefore have
\[
\triangle_{\tau}\hat{Y}^{i} = \left(M_{\tau} - A_{\tau} - B_{\tau}\right) - \left(M_{\tau^{-}} - A_{\tau^{-}} - B_{\tau^{-}}\right) = -\triangle_{\tau}B \quad \text{a.s.}
\]
By property 2 of Proposition~\ref{Proposition:Optimal-Switching-SnellEnvelopeProperties} concerning the jumps of $\hat{Y}^{i}$ (and therefore $\tilde{Y}^{i}$), we have
\[
\lbrace \triangle_{\tau}B > 0 \rbrace \subset \lbrace \hat{Y}^{i}_{\tau^{-}} = \hat{U}^{i}_{\tau^{-}} \rbrace
\]
and by using the definitions of $\hat{Y}^{i}$ and $\hat{U}^{i}$ we get:
\begin{equation}\label{eq:Optimal-Switching-JumpInIndexi}
\tilde{Y}^{i}_{\tau} < \tilde{Y}^{i}_{\tau^{-}} = \max_{j \neq i}\left\lbrace \tilde{Y}^{j}_{\tau^{-}} - \gamma_{i,j}(\tau^{-}) \right\rbrace \text{ on } \lbrace \triangle_{\tau}B > 0 \rbrace.
\end{equation}
Since $\mathbb{I}$ is finite, \eqref{eq:Optimal-Switching-JumpInIndexi} implies that there exists an $\mathbb{I}$-valued random variable $j^{*}$, $j^{*} \neq i$, such that
\begin{equation}\label{eq:Optimal-Switching-ContradictionAtJump1}
\tilde{Y}^{i}_{\tau^{-}} = \tilde{Y}^{j^{*}}_{\tau^{-}} - \gamma_{i,j^{*}}(\tau^{-}) = \max_{j \neq i}\left\lbrace \tilde{Y}^{j}_{\tau^{-}} -\gamma_{i,j}(\tau^{-}) \right\rbrace \text{ on } \lbrace \triangle_{\tau}B > 0 \rbrace.
\end{equation}
However, \eqref{eq:Optimal-Switching-JumpInIndexi} also implies that the process $\left(\max_{j \neq i}\left\lbrace \tilde{Y}^{j}_{t} - \gamma_{i,j}(t)\right\rbrace\right)_{0 \le t \le T}$ jumps at time $\tau$ (since it is dominated by $\tilde{Y}^{i}$). As the switching costs are quasi-left-continuous, we conclude that $\tilde{Y}^{j^{*}}$ jumps at time $\tau$. Using the Meyer decomposition of $\hat{Y}^{j^{*}}$ and the properties of the jumps as before, this leads to
\[
\tilde{Y}^{j^{*}}_{\tau^{-}} = \max_{l \neq j^{*}}\left\lbrace \tilde{Y}^{l}_{\tau^{-}} - \gamma_{j^{*},l}(\tau^{-}) \right\rbrace \text{ on } \lbrace \triangle_{\tau}B > 0 \rbrace
\]
and there exists an $\mathbb{I}$-valued random variable $l^{*}$, $l^{*} \neq j^{*}$, such that
\begin{equation}\label{eq:Optimal-Switching-ContradictionAtJump2}
\tilde{Y}^{j^{*}}_{\tau^{-}} = \tilde{Y}^{l^{*}}_{\tau^{-}} - \gamma_{j^{*},l^{*}}(\tau^{-}) = \max_{l \neq j^{*}}\left\lbrace \tilde{Y}^{l}_{\tau^{-}} - \gamma_{j^{*},l}(\tau^{-}) \right\rbrace \text{ on } \lbrace \triangle_{\tau}B > 0 \rbrace
\end{equation}

Putting \eqref{eq:Optimal-Switching-ContradictionAtJump1} and \eqref{eq:Optimal-Switching-ContradictionAtJump2} together, then using the quasi-left-continuity of the switching costs and Assumption~\ref{Assumption:SwitchingCosts}, the following (almost sure) inequality and contradiction to the optimality of $j^{*}$ is obtained:
\begin{align*}
\tilde{Y}^{i}_{\tau^{-}} = -\gamma_{i,j^{*}}(\tau^{-}) + \tilde{Y}^{j^{*}}_{\tau^{-}} & = -\gamma_{i,j^{*}}(\tau^{-}) -\gamma_{j^{*},l^{*}}(\tau^{-}) + \tilde{Y}^{l^{*}}_{\tau^{-}} \\
& = -\gamma_{i,j^{*}}(\tau) -\gamma_{j^{*},l^{*}}(\tau) + \tilde{Y}^{l^{*}}_{\tau^{-}} \\
& < -\gamma_{i,l^{*}}(\tau) + \tilde{Y}^{l^{*}}_{\tau^{-}} \\
& < -\gamma_{i,l^{*}}(\tau^{-}) + \tilde{Y}^{l^{*}}_{\tau^{-}} \enskip \text{on} \enskip \lbrace \triangle_{\tau}B > 0 \rbrace.
\end{align*}
This means $\triangle_{\tau}B = 0$ a.s. for every predictable time $\tau$, and $\tilde{Y}^{i} \in \mathcal{Q}$ for every $i \in \mathbb{I}$.\end{proof}

\section{Conclusion.}\label{Section:Optimal-Switching-Continuous-Time-Conclusion}
This paper extended the study of the multiple modes optimal switching problem in \cite{Djehiche2009} to account for
\begin{enumerate}
	\item non-zero, possibly different terminal rewards;
	\item signed switching costs modelled by c\`{a}dl\`{a}g, quasi-left-continuous processes;
	\item filtrations which are only assumed to satisfy the usual conditions and quasi-left-continuity.
\end{enumerate}

Just as in Theorem 1 of \cite{Djehiche2009}, it was shown that the value function of the optimal switching problem can be defined stochastically in terms of interconnected Snell envelope-like processes. The existence of these processes was proved in a manner similar to Theorem 2 of \cite{Djehiche2009}, by a limiting argument for sequences of processes solving the optimal switching problem with at most $n \ge 0$ switches. The limits of these sequences are right-continuous processes, but may not satisfy the integrability assumptions of the Snell envelope representation in general. Sufficient conditions for this representation were obtained by further hypothesizing the existence of a family of martingales satisfying particular relations among themselves and the switching costs. We explained that this ``martingale hypothesis'' can be verified quite easily in the following cases:
\begin{itemize}
	\item the switching costs are martingales;
	\item the switching costs are non-negative;
	\item the case of two modes (starting and stopping problem).
\end{itemize}

\appendix
\numberwithin{equation}{section}
\section{Admissibility of the candidate optimal strategy.}\label{Section:Optimal-Switching-IntegrabilityOfTheSwitchingCost}
Let $\alpha^{*} = \left(\tau^{*}_{n},\iota^{*}_{n}\right)_{n \ge 0}$ be the sequence of times and random mode indicators defined in equation~\eqref{eq:Optimal-Switching-OptimalStoppingStrategy} of Theorem~\ref{Theorem:Optimal-Switching-VerificationPartial}. In this section we prove that $\alpha^{*} \in \mathcal{A}_{t,i}$ (cf. Definition~\ref{Definition:Optimal-Switching-AdmissibleStrategies}). One readily verifies (by right-continuity) that $\left\lbrace\tau^{*}_{n}\right\rbrace_{n \ge 0} \subset \mathcal{T}$ is non-decreasing with $\tau^{*}_{0} = t$, and each $\iota^{*}_{n}$ is an $\mathcal{F}_{\tau^{*}_{n}}$-measurable $\mathbb{I}$-valued random variable with $\iota^{*}_{0} = i$ and $\iota^{*}_{n} \neq \iota^{*}_{n+1}$ for $n \ge 0$. The remaining properties are established in a number of steps, beginning with the following lemma on the switching times.

\begin{lemma}\label{Lemma:Optimal-Switching-FiniteStrategyCandidateOptimal}
	Let $\left\lbrace\tau^{*}_{n}\right\rbrace_{n \ge 0}$ be the switching times defined in equation~\eqref{eq:Optimal-Switching-OptimalStoppingStrategy} of Theorem~\ref{Theorem:Optimal-Switching-VerificationPartial}. Then these times satisfy
	\begin{equation}\label{eq:FiniteAndStrictlyIncreasingSwitchingTimes}
	\begin{split}
	i) & \quad \mathsf{P}(\{\tau^{*}_{n} = \tau^{*}_{n+1}, \tau^{*}_{n} < T\}) = 0,\quad n \ge 1  \\
	ii) & \quad \mathsf{P}\left(\{\tau^{*}_{n} < T,\hspace{1bp} \forall n \ge 0\}\right) = 0
	\end{split}
	\end{equation}
\end{lemma}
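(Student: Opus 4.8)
The plan is to treat the two claims separately, exploiting at each stage the fact that an optimal switching time forces the relevant process $Y^{\iota^{*}_{n-1}}$ to coincide with its obstacle $U^{\iota^{*}_{n-1}}$, together with the strict triangle inequality in Assumption~\ref{Assumption:SwitchingCosts}.

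For claim $i)$ I would fix $n \ge 1$ and argue by contradiction on the event $\{\tau^{*}_{n} = \tau^{*}_{n+1} < T\}$. By right-continuity of the $Y^{j}$ and $\gamma_{i,j}$ (cf.\ property 4 of Proposition~\ref{Proposition:Optimal-Switching-SnellEnvelopeProperties}) the infimum defining an optimal time is attained, so at $\tau^{*}_{n}$ the mode $\iota^{*}_{n}$ realises the maximum in $U^{\iota^{*}_{n-1}}$ and, because $\tau^{*}_{n+1} = \tau^{*}_{n}$, the mode $\iota^{*}_{n+1}$ realises the maximum in $U^{\iota^{*}_{n}}$; that is,
\begin{equation*}
Y^{\iota^{*}_{n-1}}_{\tau^{*}_{n}} = Y^{\iota^{*}_{n}}_{\tau^{*}_{n}} - \gamma_{\iota^{*}_{n-1},\iota^{*}_{n}}(\tau^{*}_{n}), \qquad Y^{\iota^{*}_{n}}_{\tau^{*}_{n}} = Y^{\iota^{*}_{n+1}}_{\tau^{*}_{n}} - \gamma_{\iota^{*}_{n},\iota^{*}_{n+1}}(\tau^{*}_{n}).
\end{equation*}
Substituting the second identity into the first and splitting into the cases $\iota^{*}_{n+1} = \iota^{*}_{n-1}$ and $\iota^{*}_{n+1} \ne \iota^{*}_{n-1}$ contradicts Assumption~\ref{Assumption:SwitchingCosts}: the first case forces $\gamma_{\iota^{*}_{n-1},\iota^{*}_{n}}(\tau^{*}_{n}) + \gamma_{\iota^{*}_{n},\iota^{*}_{n-1}}(\tau^{*}_{n}) = 0$, contradicting $\gamma_{i,i} = 0 < \gamma_{i,j} + \gamma_{j,i}$; the second case, via the strict triangle inequality, gives $Y^{\iota^{*}_{n-1}}_{\tau^{*}_{n}} < Y^{\iota^{*}_{n+1}}_{\tau^{*}_{n}} - \gamma_{\iota^{*}_{n-1},\iota^{*}_{n+1}}(\tau^{*}_{n})$, which is incompatible with $Y^{\iota^{*}_{n-1}}$ dominating its obstacle at $\tau^{*}_{n} < T$. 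This is the quantitative counterpart, for the candidate strategy, of Remark~\ref{Remark:SubOptimalSwitchTwice}.

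For claim $ii)$, claim $i)$ shows that on $A \coloneqq \{\tau^{*}_{n} < T,\ \forall n\}$ the times are strictly increasing, so $\tau^{*}_{n} \uparrow \tau^{*}$ for some stopping time $\tau^{*} \le T$ which is announced by $\{\tau^{*}_{n}\}$ and hence predictable on $A$. Assume $\mathsf{P}(A) > 0$. Since $Y^{j}, \gamma_{i,j} \in \mathcal{Q}$, quasi-left-continuity gives $\triangle_{\tau^{*}}Y^{j} = \triangle_{\tau^{*}}\gamma_{i,j} = 0$, so together with the existence of càdlàg left limits I obtain $Y^{j}_{\tau^{*}_{n}} \to Y^{j}_{\tau^{*}}$ and $\gamma_{i,j}(\tau^{*}_{n}) \to \gamma_{i,j}(\tau^{*})$ a.s. To avoid an $\omega$-dependent subsequence, I would decompose $A = \bigcup_{p,q,r} A_{p,q,r}$ over the finitely many triples with $p \ne q$, $q \ne r$, where $A_{p,q,r}$ is the sub-event on which $(\iota^{*}_{n-1},\iota^{*}_{n},\iota^{*}_{n+1}) = (p,q,r)$ for infinitely many $n$ (the pigeonhole principle makes this a genuine cover). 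On $A_{p,q,r}$, passing to the limit in the optimality relations along the full-sequence a.s.\ convergence yields the exact identities $Y^{p}_{\tau^{*}} = Y^{q}_{\tau^{*}} - \gamma_{p,q}(\tau^{*})$ and $Y^{q}_{\tau^{*}} = Y^{r}_{\tau^{*}} - \gamma_{q,r}(\tau^{*})$.

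Combining these two identities and arguing as in $i)$ then delivers the contradiction: if $r = p$ the no-arbitrage condition is violated directly, while if $r \ne p$ I would get $Y^{p}_{\tau^{*}} < Y^{r}_{\tau^{*}} - \gamma_{p,r}(\tau^{*})$, contradicting domination of the obstacle on $\{\tau^{*} < T\}$ and, on $\{\tau^{*} = T\}$, contradicting the terminal inequality $\Gamma_{p} \ge \Gamma_{r} - \gamma_{p,r}(T)$ of Assumption~\ref{Assumption:SwitchingCosts} (recalling $Y^{j}_{T} = \Gamma_{j}$). Hence $\mathsf{P}(A_{p,q,r}) = 0$ for every triple and $\mathsf{P}(A) = 0$. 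The main obstacle is precisely this accumulation argument in $ii)$: passing to the limit at $\tau^{*}$ relies essentially on the quasi-left-continuity of \emph{both} $Y^{j}$ and $\gamma_{i,j}$ to turn the limiting relations into exact equalities at $\tau^{*}$ (rather than mere left-limit relations, where the strict triangle inequality would only survive as an equality), and the terminal case $\tau^{*} = T$ must be handled separately through the terminal condition.
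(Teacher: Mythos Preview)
Your proposal is correct and follows essentially the same route that the paper indicates: both parts are handled by contradiction via Assumption~\ref{Assumption:SwitchingCosts}, and for part~$ii)$ the accumulation argument uses quasi-left-continuity exactly as in the reference \cite{Hamadene2012} that the paper defers to. You have in fact supplied the details the paper omits; one small remark is that the phrase ``predictable on $A$'' is informal, but you do not actually need predictability of $\tau^{*}$ --- the sequential characterisation of quasi-left-continuity (for any increasing sequence of stopping times $\tau^{*}_{n}\uparrow\tau^{*}$ one has $X_{\tau^{*}_{n}}\to X_{\tau^{*}}$ a.s.) gives the convergence $Y^{j}_{\tau^{*}_{n}}\to Y^{j}_{\tau^{*}}$ and $\gamma_{i,j}(\tau^{*}_{n})\to\gamma_{i,j}(\tau^{*})$ directly.
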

\begin{proof}
Condition~\eqref{eq:FiniteAndStrictlyIncreasingSwitchingTimes}-i) can be proved via contradiction using Assumption~\ref{Assumption:SwitchingCosts} (recall Remark~\ref{Remark:SubOptimalSwitchTwice}). Condition~\eqref{eq:FiniteAndStrictlyIncreasingSwitchingTimes}-ii) can also be proved by contradiction using Assumption~\ref{Assumption:SwitchingCosts} and the same arguments of \cite[pp.~192--193]{Hamadene2012} (since the switching costs are quasi-left-continuous). The details are therefore omitted.
 \end{proof}

The rest of this section is devoted to verifying condition~\eqref{eq:Optimal-Switching-UISupremumClosureSwitchingCosts} for the strategy $\alpha^{*}$. Recall that the cumulative cost of switching $n \ge 1$ times is given by,
\[
C^{\alpha^{*}}_{n} = \sum\limits_{k=1}^{n}\gamma_{\iota^{*}_{k-1},\iota^{*}_{k}}(\tau^{*}_{k})\mathbf{1}_{\{\tau^{*}_{k} < T\}}
\]
Since the switching costs satisfy $\gamma_{i,j} \in \mathcal{S}^{2}$ for every $i,j$ in the finite set $\mathbb{I}$, $C^{\alpha^{*}}_{n} \in L^{2}$ for every $n \ge 1$. We define a sequence 
\[
N^{*}_{n} \coloneqq \sum_{k=1}^{n}\mathbf{1}_{\{\tau^{*}_{k} < T\}},\quad n = 1,2,\ldots
\]
which we use to rewrite the expression for $C^{\alpha^{*}}_{n}$ as follows:
\begin{equation}\label{eq:Optimal-Switching-CostOfSwitchingFinitelyManyTimes2}
C^{\alpha^{*}}_{n} = \sum\limits_{k=1}^{N^{*}_{n}}\gamma_{\iota^{*}_{k-1},\iota^{*}_{k}}(\tau^{*}_{k}).
\end{equation}
The following proposition gives an alternative representation of $C^{\alpha^{*}}_{n}$ in terms of the processes $Y^{1},\ldots,Y^{m}$ and their Meyer decomposition with random superscripts (cf. Lemma~\ref{Lemma:Optimal-Switching-VerificationLemma}).

\begin{proposition}\label{Proposition:Optimal-Switching-AdmissibilityCandidateOptimalStrategy}
	Let $\alpha^{*} = \left(\tau^{*}_{n},\iota^{*}_{n}\right)_{n \ge 0} \in \mathcal{A}_{t,i}$ be the switching control strategy defined in equation~\eqref{eq:Optimal-Switching-OptimalStoppingStrategy} of Theorem~\ref{Theorem:Optimal-Switching-VerificationPartial} and let $\mathbf{u}^{*}$ be the associated mode indicator function. Then $C^{\alpha^{*}}_{n}$, the cumulative cost of switching $n \ge 1$ times under $\alpha^{*}$, satisfies
	\begin{equation}\label{eq:Optimal-Switching-CumulativeCostOfSwitchingMeyerDecompositionFinal}
	C^{\alpha^{*}}_{n} = Y^{\iota^{*}_{N^{*}_{n}}}_{\tau^{*}_{N^{*}_{n}}} - Y^{\iota^{*}_{0}}_{\tau^{*}_{0}} +  \int_{\tau^{*}_{0}}^{\tau^{*}_{N^{*}_{n}}}\psi_{\mathbf{u}^{*}_{s}}(s){d}s - \sum_{k = 1}^{N^{*}_{n}}\left(M^{\iota^{*}_{k-1}}_{\tau^{*}_{k}} - M^{\iota^{*}_{k-1}}_{\tau^{*}_{k-1}}\right) \hspace{1em} \mathsf{P}-\text{a.s.}
	\end{equation}
	where $M^{\iota^{*}_{k}}$, $k \ge 0$, is the martingale component of the Meyer decomposition \eqref{eq:Optimal-Switching-VerificationLemmaMeyerDecomposition} in Lemma~\ref{Lemma:Optimal-Switching-VerificationLemma}.
\end{proposition}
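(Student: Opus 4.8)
The plan is to expand $C^{\alpha^{*}}_{n}$ from its representation \eqref{eq:Optimal-Switching-CostOfSwitchingFinitelyManyTimes2}, use the optimality of each $\tau^{*}_{k}$ (for $1 \le k \le N^{*}_{n}$, so that $\tau^{*}_{k} < T$) to rewrite each switching cost in terms of the $Y$ processes, and then telescope. First I would record the fundamental switching identity at the optimal switch times. Since $\tau^{*}_{k}$ coincides with the debut time $\tau^{*}_{\theta}$ of Proposition~\ref{Proposition:Optimal-Switching-SnellEnvelopeProperties} (with $\theta = \tau^{*}_{k-1}$) for the Snell envelope $\hat{Y}^{\iota^{*}_{k-1}}$ of $\hat{U}^{\iota^{*}_{k-1}}$ supplied by Lemma~\ref{Lemma:Optimal-Switching-VerificationLemma}, properties~3 and~4 of Proposition~\ref{Proposition:Optimal-Switching-SnellEnvelopeProperties} together with the domination of the obstacle give $Y^{\iota^{*}_{k-1}}_{\tau^{*}_{k}} = U^{\iota^{*}_{k-1}}_{\tau^{*}_{k}}$ a.s. On $\{\tau^{*}_{k} < T\}$ this reads $Y^{\iota^{*}_{k-1}}_{\tau^{*}_{k}} = \max_{j \neq \iota^{*}_{k-1}}(Y^{j}_{\tau^{*}_{k}} - \gamma_{\iota^{*}_{k-1},j}(\tau^{*}_{k}))$, and on the maximiser event $F^{\iota^{*}_{k-1}}_{\iota^{*}_{k}}$ (which holds by the definition of $\iota^{*}_{k}$) the right-hand side equals $Y^{\iota^{*}_{k}}_{\tau^{*}_{k}} - \gamma_{\iota^{*}_{k-1},\iota^{*}_{k}}(\tau^{*}_{k})$. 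Rearranging yields $\gamma_{\iota^{*}_{k-1},\iota^{*}_{k}}(\tau^{*}_{k}) = Y^{\iota^{*}_{k}}_{\tau^{*}_{k}} - Y^{\iota^{*}_{k-1}}_{\tau^{*}_{k}}$ for each such $k$.

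Summing this identity and peeling off a telescoping sum gives
\[
C^{\alpha^{*}}_{n} = \sum_{k=1}^{N^{*}_{n}}\big(Y^{\iota^{*}_{k}}_{\tau^{*}_{k}} - Y^{\iota^{*}_{k-1}}_{\tau^{*}_{k}}\big) = \big(Y^{\iota^{*}_{N^{*}_{n}}}_{\tau^{*}_{N^{*}_{n}}} - Y^{\iota^{*}_{0}}_{\tau^{*}_{0}}\big) - \sum_{k=1}^{N^{*}_{n}}\big(Y^{\iota^{*}_{k-1}}_{\tau^{*}_{k}} - Y^{\iota^{*}_{k-1}}_{\tau^{*}_{k-1}}\big),
\]
so it remains to evaluate the same-mode increments $Y^{\iota^{*}_{k-1}}_{\tau^{*}_{k}} - Y^{\iota^{*}_{k-1}}_{\tau^{*}_{k-1}}$. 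Here I would invoke the Meyer decomposition with random superscript \eqref{eq:Optimal-Switching-VerificationLemmaMeyerDecomposition}: on $[\tau^{*}_{k-1},T]$ we have $Y^{\iota^{*}_{k-1}}_{t} + \int_{0}^{t}\psi_{\iota^{*}_{k-1}}(s)ds = M^{\iota^{*}_{k-1}}_{t} - A^{\iota^{*}_{k-1}}_{t}$ with $A^{\iota^{*}_{k-1}}$ predictable, continuous and increasing. Evaluating at $t = \tau^{*}_{k}$ and $t = \tau^{*}_{k-1}$ and subtracting gives
\[
Y^{\iota^{*}_{k-1}}_{\tau^{*}_{k}} - Y^{\iota^{*}_{k-1}}_{\tau^{*}_{k-1}} = \big(M^{\iota^{*}_{k-1}}_{\tau^{*}_{k}} - M^{\iota^{*}_{k-1}}_{\tau^{*}_{k-1}}\big) - \big(A^{\iota^{*}_{k-1}}_{\tau^{*}_{k}} - A^{\iota^{*}_{k-1}}_{\tau^{*}_{k-1}}\big) - \int_{\tau^{*}_{k-1}}^{\tau^{*}_{k}}\psi_{\iota^{*}_{k-1}}(s)ds.
\]

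The crux of the argument, and where I expect the main difficulty, is showing the compensator is flat on each optimal interval, i.e. $A^{\iota^{*}_{k-1}}_{\tau^{*}_{k}} = A^{\iota^{*}_{k-1}}_{\tau^{*}_{k-1}}$ a.s.; this is the only place where the optimality (rather than mere admissibility) of $\tau^{*}_{k}$ enters, and it requires some care with the random indexing. Because $\tau^{*}_{k}$ is exactly the debut time $\tau^{*}_{\theta}$ of property~4 with $\theta = \tau^{*}_{k-1}$, the stopped process $(\hat{Y}^{\iota^{*}_{k-1}}_{t \wedge \tau^{*}_{k}})_{\tau^{*}_{k-1} \le t \le T}$ is a uniformly integrable martingale. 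Subtracting the martingale $(M^{\iota^{*}_{k-1}}_{t \wedge \tau^{*}_{k}})$ shows that $(A^{\iota^{*}_{k-1}}_{t \wedge \tau^{*}_{k}})$ is a predictable, increasing martingale, hence a.s. constant; evaluating at $t = T$ and using that $A^{\iota^{*}_{k-1}}$ is increasing forces $A^{\iota^{*}_{k-1}}_{\tau^{*}_{k}} = A^{\iota^{*}_{k-1}}_{\tau^{*}_{k-1}}$. As in Lemma~\ref{Lemma:Optimal-Switching-VerificationLemma}, these random-index statements are made rigorous by first proving them for each fixed mode $i \in \mathbb{I}$ on the $\mathcal{F}_{\tau^{*}_{k-1}}$-measurable event $\{\iota^{*}_{k-1} = i\}$ and then summing over the finite set $\mathbb{I}$.

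Finally I would substitute the flatness back in, so that $Y^{\iota^{*}_{k-1}}_{\tau^{*}_{k}} - Y^{\iota^{*}_{k-1}}_{\tau^{*}_{k-1}} = (M^{\iota^{*}_{k-1}}_{\tau^{*}_{k}} - M^{\iota^{*}_{k-1}}_{\tau^{*}_{k-1}}) - \int_{\tau^{*}_{k-1}}^{\tau^{*}_{k}}\psi_{\iota^{*}_{k-1}}(s)ds$, and sum over $k$. Since the active mode on $(\tau^{*}_{k-1},\tau^{*}_{k}]$ is $\iota^{*}_{k-1}$, the definition of $\mathbf{u}^{*}$ in \eqref{eq:Optimal-Switching-ModeIndicatorContinuousTime} concatenates the integrals into $\int_{\tau^{*}_{0}}^{\tau^{*}_{N^{*}_{n}}}\psi_{\mathbf{u}^{*}_{s}}(s)ds$. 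Inserting this into the telescoped display and collecting the martingale increments then yields exactly \eqref{eq:Optimal-Switching-CumulativeCostOfSwitchingMeyerDecompositionFinal}.
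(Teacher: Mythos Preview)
Your proposal is correct and uses the same ingredients as the paper's proof: the identity $\gamma_{\iota^{*}_{k-1},\iota^{*}_{k}}(\tau^{*}_{k}) = Y^{\iota^{*}_{k}}_{\tau^{*}_{k}} - Y^{\iota^{*}_{k-1}}_{\tau^{*}_{k}}$ on $\{\tau^{*}_{k} < T\}$, the Meyer decomposition \eqref{eq:Optimal-Switching-VerificationLemmaMeyerDecomposition}, and the flatness of $A^{\iota^{*}_{k-1}}$ on $[\tau^{*}_{k-1},\tau^{*}_{k}]$ coming from property~4 of Proposition~\ref{Proposition:Optimal-Switching-SnellEnvelopeProperties}. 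The only difference is organizational: the paper first substitutes the Meyer decomposition into $\sum_{k}(Y^{\iota^{*}_{k}}_{\tau^{*}_{k}} - Y^{\iota^{*}_{k-1}}_{\tau^{*}_{k}})$, telescopes the $M$-, $A$-, and $\psi$-sums separately, and then reassembles the $Y$-terms at the end, whereas you telescope the $Y$-sum first to isolate the same-mode increments $Y^{\iota^{*}_{k-1}}_{\tau^{*}_{k}} - Y^{\iota^{*}_{k-1}}_{\tau^{*}_{k-1}}$ and only then invoke Meyer; your route is slightly more economical since it avoids the final recombination step.
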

\begin{proof}
By definition of the strategy $\alpha^{*}$ (cf. \eqref{eq:Optimal-Switching-OptimalStoppingStrategy}), optimality of the time $\tau^{*}_{n}$ and the definition of $\iota^{*}_{n}$, for $n \ge 1$ the cost of switching at $\tau^{*}_{n}$ is,
\begin{equation}\label{eq:Optimal-Switching-CostOfSwitching}
\gamma_{\iota^{*}_{n-1},\iota^{*}_{n}}(\tau^{*}_{n})\mathbf{1}_{\{\tau^{*}_{n} < T\}} = \left(Y^{\iota^{*}_{n}}_{\tau^{*}_{n}} - Y^{\iota^{*}_{n-1}}_{\tau^{*}_{n}}\right)\mathbf{1}_{\{\tau^{*}_{n} < T\}}\hspace{1em}\mathsf{P}-\text{a.s.}
\end{equation}
Therefore, from equation~\eqref{eq:Optimal-Switching-CostOfSwitchingFinitelyManyTimes2} and \eqref{eq:Optimal-Switching-CostOfSwitching} the cost of the first $n$ switches can be rewritten as,
\begin{equation}\label{eq:Optimal-Switching-CumulativeCostOfSwitching}
C^{\alpha^{*}}_{n} = \sum_{k = 1}^{N^{*}_{n}}\left(Y^{\iota^{*}_{k}}_{\tau^{*}_{k}} - Y^{\iota^{*}_{k-1}}_{\tau^{*}_{k}}\right)\hspace{1em}\mathsf{P}-\text{a.s.}
\end{equation}
Now, Lemma~\ref{Lemma:Optimal-Switching-VerificationLemma} proved that the following Meyer decomposition holds for $k \ge 0$ (cf. equation~\eqref{eq:Optimal-Switching-VerificationLemmaMeyerDecomposition}):
\begin{equation}\label{eq:Optimal-Switching-OptimalProcessMeyerDecomposition}
Y^{\iota^{*}_{k}}_{t} + \int_{0}^{t}\psi_{\iota^{*}_{k}}(s){d}s = M^{\iota^{*}_{k}}_{t} - A^{\iota^{*}_{k}}_{t},\hspace{1em}\mathsf{P}-\text{a.s. }\forall \hspace{2bp} \tau^{*}_{k} \le t \le T.
\end{equation}
where, on $[\tau^{*}_{k},T]$, $M^{\iota^{*}_{k}}$ is a uniformly integrable c\`{a}dl\`{a}g martingale and $A^{\iota^{*}_{k}}$ is a predictable, continuous and increasing process. The Meyer decomposition is used to rewrite equation~\eqref{eq:Optimal-Switching-CumulativeCostOfSwitching} for the cumulative switching costs as follows: $\mathsf{P}$-a.s.,
\begin{equation}\label{eq:Optimal-Switching-CumulativeCostOfSwitchingMeyerDecomposition}
\begin{split}
C^{\alpha^{*}}_{n} = {} & \sum_{k = 1}^{N^{*}_{n}}\left(M^{\iota^{*}_{k}}_{\tau^{*}_{k}} - M^{\iota^{*}_{k-1}}_{\tau^{*}_{k}}\right) - \sum_{k = 1}^{N^{*}_{n}}\left(A^{\iota^{*}_{k}}_{\tau^{*}_{k}} - A^{\iota^{*}_{k-1}}_{\tau^{*}_{k}}\right) \\
& \qquad - \sum_{k = 1}^{N^{*}_{n}}\left(\int_{0}^{\tau^{*}_{k}}\psi_{\iota^{*}_{k}}(s){d}s - \int_{0}^{\tau^{*}_{k}}\psi_{\iota^{*}_{k-1}}(s){d}s \right).
\end{split}
\end{equation}
The first summation term in equation~\eqref{eq:Optimal-Switching-CumulativeCostOfSwitchingMeyerDecomposition} can be rewritten as:
\begin{equation}\label{eq:Optimal-Switching-SimplificationOfASummationTerm1}
\sum_{k = 1}^{N^{*}_{n}}\left(M^{\iota^{*}_{k}}_{\tau^{*}_{k}} - M^{\iota^{*}_{k-1}}_{\tau^{*}_{k}}\right) = M^{\iota^{*}_{N^{*}_{n}}}_{\tau^{*}_{N^{*}_{n}}} - M^{\iota^{*}_{0}}_{\tau^{*}_{0}} - \sum_{k = 1}^{N^{*}_{n}}\left(M^{\iota^{*}_{k-1}}_{\tau^{*}_{k}} - M^{\iota^{*}_{k-1}}_{\tau^{*}_{k-1}}\right)
\end{equation}
For every $k \ge 0$, by the definition of $\tau^{*}_{k+1}$ and property 4 of Proposition~\ref{Proposition:Optimal-Switching-SnellEnvelopeProperties}, we know that $\left(Y^{\iota^{*}_{k}}_{t} + \int_{0}^{t}\psi_{\iota^{*}_{k}}(s){d}s\right)$ is a martingale $\mathsf{P}$-a.s. for every $\tau^{*}_{k} \le t \le \tau^{*}_{k+1}$. By using the Meyer decomposition \eqref{eq:Optimal-Switching-OptimalProcessMeyerDecomposition}, we therefore observe that
$\forall k \ge 0$, $A^{\iota^{*}_{k}}_{t}$ \emph{is constant} $\mathsf{P}$-a.s. $\forall \hspace{2bp} \tau^{*}_{k} \le t \le \tau^{*}_{k+1}$. The summation term in \eqref{eq:Optimal-Switching-CumulativeCostOfSwitchingMeyerDecomposition} with respect to $A^{\iota^{*}_{k-1}}$ can then be simplified as follows,
\begin{equation}\label{eq:Optimal-Switching-SimplificationOfASummationTerm2}
\sum_{k = 1}^{N^{*}_{n}}\left(A^{\iota^{*}_{k}}_{\tau^{*}_{k}} - A^{\iota^{*}_{k-1}}_{\tau^{*}_{k}}\right) = \sum_{k = 1}^{N^{*}_{n}}\left(A^{\iota^{*}_{k}}_{\tau^{*}_{k}} - A^{\iota^{*}_{k-1}}_{\tau^{*}_{k-1}}\right) = A^{\iota^{*}_{N^{*}_{n}}}_{\tau^{*}_{N^{*}_{n}}} - A^{\iota^{*}_{0}}_{\tau^{*}_{0}} \hspace{1em} \mathsf{P}-\text{a.s.}
\end{equation}

By writing out the terms and using the definition of the mode indicator function $\mathbf{u}^{*}$, the third summation term in \eqref{eq:Optimal-Switching-CumulativeCostOfSwitchingMeyerDecomposition} is simplified as follows: $\mathsf{P}$-a.s.,
\begin{align}\label{eq:Optimal-Switching-SimplificationOfASummationTerm3}
{} & -\sum_{k = 1}^{N^{*}_{n}}\left(\int_{0}^{\tau^{*}_{k}}\psi_{\iota^{*}_{k}}(s){d}s - \int_{0}^{\tau^{*}_{k}}\psi_{\iota^{*}_{k-1}}(s){d}s \right) \nonumber \\
= \quad & \int_{0}^{\tau^{*}_{1}}\psi_{\iota^{*}_{0}}(s){d}s + \sum_{k = 1}^{N^{*}_{n}-1}\int_{\tau^{*}_{k}}^{\tau^{*}_{k+1}}\psi_{\iota^{*}_{k}}(s){d}s - \int_{0}^{\tau^{*}_{N^{*}_{n}}}\psi_{\iota^{*}_{N^{*}_{n}}}(s){d}s \nonumber \\
= \quad & \int_{0}^{\tau^{*}_{1}}\psi_{\iota^{*}_{0}}(s){d}s + \int_{\tau^{*}_{1}}^{\tau^{*}_{N^{*}_{n}}}\psi_{\mathbf{u}^{*}_{s}}(s){d}s - \int_{0}^{\tau^{*}_{N^{*}_{n}}}\psi_{\iota^{*}_{N^{*}_{n}}}(s){d}s
\end{align}

Substitute equations~\eqref{eq:Optimal-Switching-SimplificationOfASummationTerm1}, \eqref{eq:Optimal-Switching-SimplificationOfASummationTerm2}, and \eqref{eq:Optimal-Switching-SimplificationOfASummationTerm3} into equation~\eqref{eq:Optimal-Switching-CumulativeCostOfSwitchingMeyerDecomposition} for the cumulative switching cost, then use the Meyer decomposition~\eqref{eq:Optimal-Switching-OptimalProcessMeyerDecomposition} and the definition of $\mathbf{u}^{*}$ to get,
\begin{align*}
C^{\alpha^{*}}_{n} = {} & M^{\iota^{*}_{N^{*}_{n}}}_{\tau^{*}_{N^{*}_{n}}} - M^{\iota^{*}_{0}}_{\tau^{*}_{0}} - \sum_{k = 1}^{N^{*}_{n}}\left(M^{\iota^{*}_{k-1}}_{\tau^{*}_{k}} - M^{\iota^{*}_{k-1}}_{\tau^{*}_{k-1}}\right) - A^{\iota^{*}_{N^{*}_{n}}}_{\tau^{*}_{N^{*}_{n}}} +  A^{\iota^{*}_{0}}_{\tau^{*}_{0}} \nonumber \\
& + \int_{0}^{\tau^{*}_{1}}\psi_{\iota^{*}_{0}}(s){d}s + \int_{\tau^{*}_{1}}^{\tau^{*}_{N^{*}_{n}}}\psi_{\mathbf{u}^{*}_{s}}(s){d}s - \int_{0}^{\tau^{*}_{N^{*}_{n}}}\psi_{\iota^{*}_{N^{*}_{n}}}(s){d}s \nonumber \\
= {} & Y^{\iota^{*}_{N^{*}_{n}}}_{\tau^{*}_{N^{*}_{n}}} - \left(Y^{\iota^{*}_{0}}_{\tau^{*}_{0}} + \int_{0}^{\tau^{*}_{0}}\psi_{\iota^{*}_{0}}(s){d}s\right) + \int_{0}^{\tau^{*}_{1}}\psi_{\iota^{*}_{0}}(s){d}s +  \int_{\tau^{*}_{1}}^{\tau^{*}_{N^{*}_{n}}}\psi_{\mathbf{u}^{*}_{s}}(s){d}s \nonumber \\
& - \sum_{k = 1}^{N^{*}_{n}}\left(M^{\iota^{*}_{k-1}}_{\tau^{*}_{k}} - M^{\iota^{*}_{k-1}}_{\tau^{*}_{k-1}}\right) \nonumber \\
= {} & Y^{\iota^{*}_{N^{*}_{n}}}_{\tau^{*}_{N^{*}_{n}}} - Y^{\iota^{*}_{0}}_{\tau^{*}_{0}} +  \int_{\tau^{*}_{0}}^{\tau^{*}_{N^{*}_{n}}}\psi_{\mathbf{u}^{*}_{s}}(s){d}s - \sum_{k = 1}^{N^{*}_{n}}\left(M^{\iota^{*}_{k-1}}_{\tau^{*}_{k}} - M^{\iota^{*}_{k-1}}_{\tau^{*}_{k-1}}\right) \hspace{1em} \mathsf{P}-\text{a.s.}
\end{align*}
 \end{proof}

\subsection{Convergence of the family of cumulative switching costs.}\label{Section:Optimal-Switching-ConvergenceOfSwitchingCostsInL2}
\subsubsection{A discrete-parameter martingale.}
For $k \ge 0$, define an $\mathcal{F}_{\tau^{*}_{k}}$-measurable random variable $\xi_{k}$ by,
\begin{equation}\label{eq:Optimal-Switching-MartingaleIncrements}
\xi_{k} \coloneqq \begin{cases}
M^{\iota^{*}_{k-1}}_{\tau^{*}_{k}} - M^{\iota^{*}_{k-1}}_{\tau^{*}_{k-1}} & \hspace{1em}\text{on } k \ge 1 \text{ and } \lbrace \tau^{*}_{k} < T \rbrace ,\\
0 & \hspace{1em}\text{otherwise.}
\end{cases}
\end{equation}
Note that the limit $\xi_{\infty}$ is a well-defined $\mathcal{F}_{T}$-measurable random variable which satisfies
\[
\xi_{\infty} \coloneqq \lim_{k}\xi_{k} = \begin{cases}
0, \hspace{1em} \text{on }\hspace{2bp} \lbrace N(\alpha^{*}) < \infty \rbrace,\\
0, \hspace{1em}\text{a.s. on }\hspace{2bp} \lbrace N(\alpha^{*}) = \infty \rbrace.
\end{cases} 
\]
where the second line holds since $M^{i}$, $i \in \mathbb{I}$, is quasi-left-continuous, and the switching times $\lbrace \tau^{*}_{k} \rbrace_{k \ge 1}$ announce $T$ on $\lbrace N(\alpha^{*}) = \infty \rbrace$ (cf. Lemma~\ref{Lemma:Optimal-Switching-FiniteStrategyCandidateOptimal}). In this case set $\iota^{*}_{\infty} \coloneqq \mathbf{u}^{*}_{T}$.

Since $M^{i} \in \mathcal{S}^{2}$ for $i \in \mathbb{I}$ (cf. Proposition~\ref{Proposition:Optimal-Switching-SquareIntegrableMartingale}) and the set $\mathbb{I}$ is finite, the sequence $\lbrace \xi_{k} \rbrace_{k \ge 0}$ is in $L^{2}$. Properties of square-integrable martingales and conditional expectations can be used to show:
\begin{align}\label{eq:Optimal-Switching-UniformBoundOnTheSquaredDifferences}
\forall n \ge 1, \quad \mathsf{E}\left[\sum_{k = 1}^{n} (\xi_{k})^{2}\right] & = \mathsf{E}\left[\sum_{k = 1}^{n}(M^{\iota^{*}_{k-1}}_{\tau^{*}_{k}} - M^{\iota^{*}_{k-1}}_{\tau^{*}_{k-1}})^{2}\mathbf{1}_{\{\tau^{*}_{k} < T\}}\right] \nonumber \\
& \le \sum_{i = 1}^{m}\sum_{k = 1}^{n}\mathsf{E}\left[\left((M^{i}_{\tau^{*}_{k}})^{2} - 2 \cdot M^{i}_{\tau^{*}_{k-1}} \cdot \mathsf{E}\big[M^{i}_{\tau^{*}_{k}} \bigm\vert \mathcal{F}_{\tau^{*}_{k-1}}\big] + (M^{i}_{\tau^{*}_{k-1}})^{2}\right)\right] \nonumber \\
& \le \sum_{i = 1}^{m}\mathsf{E}\left[(\sup\nolimits_{0 \le s \le T}|M^{i}_{s}|)^{2}\right] \nonumber \\
& \le 4 \cdot m \cdot \max_{i \in \mathbb{I}}\mathsf{E}\left[(M^{i}_{T})^{2}\right]
\end{align}
Finally, almost surely for $1 \le k \le N^{*}_{n}$,
\[
\mathsf{E}\bigl[\xi_{k} \bigm\vert \mathcal{F}_{\tau^{*}_{k-1}}\bigr] = \mathsf{E}\bigl[M^{\iota^{*}_{k-1}}_{\tau^{*}_{k}} - M^{\iota^{*}_{k-1}}_{\tau^{*}_{k-1}} \bigm\vert \mathcal{F}_{\tau^{*}_{k-1}}\bigr] = \sum_{i \in \mathbb{I}}\mathbf{1}_{\{\iota^{*}_{k-1} = i\}}\mathsf{E}\bigl[M^{i}_{\tau^{*}_{k}} - M^{i}_{\tau^{*}_{k-1}} \bigm\vert \mathcal{F}_{\tau^{*}_{k-1}}\bigr] = 0
\]
and letting $n \to \infty$ shows that $\mathsf{E}\big[\xi_{k} \bigm\vert \mathcal{F}_{\tau^{*}_{k-1}}\big] = 0$ for $k \ge 1$. Now define an increasing family of sub-$\sigma$-algebras of $\mathcal{F}$, $\mathbb{G} = \left(\mathcal{G}_{n}\right)_{n \ge 0}$, by $\mathcal{G}_{n} \coloneqq \mathcal{F}_{\tau^{*}_{n}}$. Applying Lemma~\ref{Lemma:Optimal-Switching-FiniteStrategyCandidateOptimal} and Proposition~\ref{Proposition:Optimal-Switching-qlcFiltrations} shows that
\[
\mathcal{G}_{\infty} \coloneqq \bigvee_{n}\mathcal{G}_{n} = \bigvee_{n}\mathcal{F}_{\tau^{*}_{n}} = \mathcal{F}_{T}.
\]
The sequence $\left(X_{n},\mathcal{G}_{n}\right)_{n \ge 0}$ with $X_{n}$ defined by
\begin{equation}\label{eq:Optimal-Switching-MartingaleFromMartingaleDifference}
X_{n} \coloneqq \sum_{k=0}^{n}\xi_{k}
\end{equation}
is a discrete-parameter martingale in $L^{2}$. The probability space $\left(\Omega,\mathcal{F},\mathsf{P}\right)$ with filtration $\mathbb{G} = \left(\mathcal{G}_{n}\right)_{n \ge 0}$ will be used to discuss convergence and integrability properties of $\left(X_{n}\right)_{n \ge 0}$.
\subsubsection{Convergence of the discrete-parameter martingale.}
As discussed previously, the $\mathbb{G}$-martingale $\left(X_{n}\right)_{n \ge 0}$ is in $L^{2}$. It is not hard to verify, by the conditional Jensen inequality for instance, that the sequence $\left(X^{2}_{n}\right)_{n \ge 0}$ is a positive $\mathbb{G}$-submartingale. By Doob's Decomposition (Proposition~\MakeUppercase{\romannumeral 7}-1-2 of \cite{Neveu1975}), $\left(X^{2}_{n}\right)_{n \ge 0}$ can be decomposed uniquely as
\begin{equation}\label{eq:Optimal-Switching-DoobDecomposition}
X_{n}^{2} = Q_{n} + R_{n}
\end{equation}
where $\left(Q_{n}\right)_{n \ge 0}$ is an integrable $\mathbb{G}$-martingale and $\left(R_{n}\right)_{n \ge 0}$ is an increasing process (starting from $0$) with respect to $\mathbb{G}$. Convergence of $\left(X_{n}\right)_{n \ge 0}$ depends on the properties of the compensator $\left(R_{n}\right)_{n \ge 0}$, and this is made more precise by the following proposition.

\begin{proposition}[\cite{Neveu1975}, Proposition~\MakeUppercase{\romannumeral 7}-2-3]\label{Proposition:Optimal-Switching-ConvergenceOfL2DiscreteParameterMartingales}
	Let $\left(X_{n}\right)_{n \ge 0}$ be a square-integrable $\mathbb{G}$-martingale such that (without loss of generality) $X_{0} = 0$, and $\left(R_{n}\right)_{n \ge 0}$ denote the increasing process associated with the $\mathbb{G}$-submartingale $\left(X^{2}_{n}\right)_{n \ge 0}$ by the Doob decomposition~\eqref{eq:Optimal-Switching-DoobDecomposition}. Then if $\mathsf{E}[R_{\infty}]< \infty$, the martingale $\left(X_{n}\right)_{n \ge 0}$ converges in $L^{2}$; furthermore, $\mathsf{E}[(\sup_{n \ge 0}|X_{n}|)^{2}] \le 4\mathsf{E}[R_{\infty}]$.
\end{proposition}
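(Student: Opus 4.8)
The plan is to use the Doob decomposition \eqref{eq:Optimal-Switching-DoobDecomposition} to bound the second moments of $(X_{n})_{n \ge 0}$ uniformly, to deduce $L^{2}$-convergence from the orthogonality of the martingale increments, and to obtain the maximal estimate from Doob's $L^{2}$-inequality. Each step is standard, so the emphasis is on organising them correctly rather than on any single delicate argument.

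First I would record that, since $X_{0} = 0$, both components of the decomposition $X_{n}^{2} = Q_{n} + R_{n}$ vanish at $n = 0$: the compensator $R$ starts at $0$ by construction, and then $Q_{0} = X_{0}^{2} - R_{0} = 0$. As $(Q_{n})_{n \ge 0}$ is a $\mathbb{G}$-martingale we have $\mathsf{E}[Q_{n}] = \mathsf{E}[Q_{0}] = 0$, whence $\mathsf{E}[X_{n}^{2}] = \mathsf{E}[R_{n}]$ for every $n$. The process $(R_{n})_{n \ge 0}$ is increasing, so $R_{n} \uparrow R_{\infty}$ pointwise and monotone convergence gives $\mathsf{E}[X_{n}^{2}] = \mathsf{E}[R_{n}] \uparrow \mathsf{E}[R_{\infty}] < \infty$. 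In particular $\sup_{n} \mathsf{E}[X_{n}^{2}] \le \mathsf{E}[R_{\infty}] < \infty$, so $(X_{n})_{n \ge 0}$ is bounded in $L^{2}$.

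Next I would exploit the orthogonality of the increments $\xi_{k} = X_{k} - X_{k-1}$. The martingale property gives $\mathsf{E}[\xi_{k} \mid \mathcal{G}_{k-1}] = 0$, so for $j < k$ the tower property yields $\mathsf{E}[\xi_{j}\xi_{k}] = \mathsf{E}[\xi_{j}\,\mathsf{E}[\xi_{k} \mid \mathcal{G}_{k-1}]] = 0$ (using that $\xi_{j}$ is $\mathcal{G}_{k-1}$-measurable); hence $\mathsf{E}[X_{n}^{2}] = \sum_{k=1}^{n} \mathsf{E}[\xi_{k}^{2}]$. The uniform $L^{2}$-bound therefore forces $\sum_{k \ge 1} \mathsf{E}[\xi_{k}^{2}] < \infty$, and for $m < n$ orthogonality again gives $\mathsf{E}[(X_{n} - X_{m})^{2}] = \sum_{k=m+1}^{n} \mathsf{E}[\xi_{k}^{2}] \to 0$ as $m,n \to \infty$. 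Thus $(X_{n})_{n \ge 0}$ is Cauchy in $L^{2}$ and converges in $L^{2}$ to a limit $X_{\infty}$, which is the first assertion.

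Finally, for the maximal estimate I would invoke Doob's $L^{2}$ maximal inequality applied to the martingale $(X_{n})$: for each fixed $n$, $\mathsf{E}[\max_{0 \le k \le n} |X_{k}|^{2}] \le 4\,\mathsf{E}[X_{n}^{2}] = 4\,\mathsf{E}[R_{n}] \le 4\,\mathsf{E}[R_{\infty}]$. Since $\max_{0 \le k \le n} |X_{k}|^{2}$ increases to $(\sup_{n \ge 0} |X_{n}|)^{2}$, a last application of monotone convergence delivers $\mathsf{E}[(\sup_{n \ge 0}|X_{n}|)^{2}] \le 4\,\mathsf{E}[R_{\infty}]$. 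None of the steps is genuinely difficult; the only ingredient that cannot be reduced to elementary conditioning is Doob's maximal inequality itself, which is where I would lean on the cited reference \cite{Neveu1975} (or any standard text) rather than reprove it.
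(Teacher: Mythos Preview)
Your argument is correct and is exactly the standard proof one finds in \cite{Neveu1975}. Note, however, that the paper does not actually supply a proof of this proposition: it merely cites Proposition~\MakeUppercase{\romannumeral 7}-2-3 of \cite{Neveu1975} and uses the result as a black box, so there is nothing in the paper to compare your proposal against beyond the reference itself.
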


We can now prove the main result.

\begin{theorem}[Square-integrable cumulative switching costs]\label{Theorem:Optimal-Switching-SquareIntegrableSwitchingCosts}
	The sequence $\{ C^{\alpha^{*}}_{n}\}_{n \ge 1}$ converges in $L^{2}$ and also satisfies $\mathsf{E}\big[(\sup_{n}\big|C^{\alpha^{*}}_{n}\big|)^{2}\big] < \infty$.
\end{theorem}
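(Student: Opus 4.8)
The plan is to combine the pathwise representation of $C^{\alpha^*}_n$ from Proposition~\ref{Proposition:Optimal-Switching-AdmissibilityCandidateOptimalStrategy} with the $L^2$-convergence of the discrete-parameter martingale $(X_n)_{n\ge 0}$ of \eqref{eq:Optimal-Switching-MartingaleFromMartingaleDifference}. First I would observe that, because the times $\{\tau^*_k\}$ are non-decreasing, the index set $\{1 \le k \le n : \tau^*_k < T\}$ is the initial segment $\{1,\ldots,N^*_n\}$, so that the martingale sum appearing in \eqref{eq:Optimal-Switching-CumulativeCostOfSwitchingMeyerDecompositionFinal} coincides with $X_n = \sum_{k=0}^{n}\xi_k$ (recall $\xi_0 = 0$ and the definition \eqref{eq:Optimal-Switching-MartingaleIncrements} of $\xi_k$). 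Consequently
\[
C^{\alpha^*}_n = Y^{\iota^*_{N^*_n}}_{\tau^*_{N^*_n}} - Y^{\iota^*_0}_{\tau^*_0} + \int_{\tau^*_0}^{\tau^*_{N^*_n}} \psi_{\mathbf{u}^*_s}(s)\,{d}s - X_n \quad \mathsf{P}\text{-a.s.},
\]
and it suffices to control each of the four terms on the right, both for uniform $L^2$-domination and for convergence.

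The key step is to verify the hypothesis of Proposition~\ref{Proposition:Optimal-Switching-ConvergenceOfL2DiscreteParameterMartingales}. The compensator in the Doob decomposition \eqref{eq:Optimal-Switching-DoobDecomposition} is $R_n = \sum_{k=1}^{n}\mathsf{E}\bigl[\xi_k^2 \bigm\vert \mathcal{G}_{k-1}\bigr]$, so the tower property gives $\mathsf{E}[R_n] = \mathsf{E}\bigl[\sum_{k=1}^{n}\xi_k^2\bigr]$, which by the uniform bound \eqref{eq:Optimal-Switching-UniformBoundOnTheSquaredDifferences} and the fact that $M^i \in \mathcal{S}^2$ is at most $4\,m\,\max_{i \in \mathbb{I}}\mathsf{E}[(M^i_T)^2] < \infty$, independently of $n$. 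Monotone convergence then yields $\mathsf{E}[R_\infty] < \infty$, and Proposition~\ref{Proposition:Optimal-Switching-ConvergenceOfL2DiscreteParameterMartingales} shows that $(X_n)$ converges in $L^2$ (and, being $L^2$-bounded, almost surely by the martingale convergence theorem) to a limit $X_\infty$, with $\mathsf{E}[(\sup_n |X_n|)^2] \le 4\,\mathsf{E}[R_\infty] < \infty$.

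Next I would establish the uniform $L^2$-domination required for both assertions. Since $\mathbb{I}$ is finite and each $Y^i \in \mathcal{S}^2$, the term $\sup_n |Y^{\iota^*_{N^*_n}}_{\tau^*_{N^*_n}}|$ is dominated by $\max_{i \in \mathbb{I}}\sup_{0 \le t \le T}|Y^i_t| \in L^2$; the constant term $Y^{\iota^*_0}_{\tau^*_0} = Y^i_t$ lies in $L^2$; the integral is dominated by $\int_0^T \max_{i \in \mathbb{I}}|\psi_i(s)|\,{d}s$, which is in $L^2$ by Cauchy--Schwarz since $\psi_i \in \mathcal{M}^2$; and $\sup_n |X_n| \in L^2$ by the previous paragraph. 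Summing these bounds yields $\sup_n |C^{\alpha^*}_n| \le G$ for some $G \in L^2$, which already proves $\mathsf{E}\big[(\sup_n |C^{\alpha^*}_n|)^2\big] < \infty$.

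Finally, for the $L^2$-convergence I would deduce almost-sure convergence and then invoke dominated convergence. On the event $\{N(\alpha^*) < \infty\}$, which has full probability by Lemma~\ref{Lemma:Optimal-Switching-FiniteStrategyCandidateOptimal}, one has $N^*_n = N(\alpha^*)$ for all $n$ large, so the $Y$-term and the integral stabilise while $X_n \to X_\infty$ almost surely; hence $C^{\alpha^*}_n$ converges almost surely, necessarily to the limit $C^{\alpha^*}$ of Remark~\ref{Note:IntegrabilityConcerns}. Since $|C^{\alpha^*}_n - C^{\alpha^*}|^2 \le (2G)^2 \in L^1$, dominated convergence gives $C^{\alpha^*}_n \to C^{\alpha^*}$ in $L^2$. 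The main obstacle is the verification that $\mathsf{E}[R_\infty] < \infty$, i.e. the summability of the conditional variances of the martingale increments, as everything else reduces to routine domination; this in turn rests on the previously obtained uniform estimate \eqref{eq:Optimal-Switching-UniformBoundOnTheSquaredDifferences} and on $M^i \in \mathcal{S}^2$.
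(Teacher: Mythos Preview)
Your proposal is correct and follows essentially the same route as the paper: represent $C^{\alpha^*}_n$ via Proposition~\ref{Proposition:Optimal-Switching-AdmissibilityCandidateOptimalStrategy}, reduce to controlling the discrete-parameter martingale $(X_n)$, and verify the hypothesis of Proposition~\ref{Proposition:Optimal-Switching-ConvergenceOfL2DiscreteParameterMartingales} using the bound~\eqref{eq:Optimal-Switching-UniformBoundOnTheSquaredDifferences}. The only cosmetic differences are that the paper writes $X_{N^*_n}$ rather than $X_n$ (your observation that $\xi_k=0$ for $N^*_n<k\le n$ shows these coincide) and uses Fatou's lemma where you use monotone convergence for $\mathsf{E}[R_\infty]$; your choice is in fact the more natural one since $(R_n)$ is increasing.
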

\begin{proof}
Proposition~\ref{Proposition:Optimal-Switching-AdmissibilityCandidateOptimalStrategy} gave the following representation for the switching cost sum:
\begin{align}\label{eq:Optimal-Switching-CumulativeSwitchingCostUniformIntegrabilityCheck}
C^{\alpha^{*}}_{n} & = Y^{\iota^{*}_{N^{*}_{n}}}_{\tau^{*}_{N^{*}_{n}}} - Y^{\iota^{*}_{0}}_{\tau^{*}_{0}} +  \int_{\tau^{*}_{0}}^{\tau^{*}_{N^{*}_{n}}}\psi_{\mathbf{u}^{*}_{s}}(s){d}s - \sum_{k = 1}^{N^{*}_{n}}\left(M^{\iota^{*}_{k-1}}_{\tau^{*}_{k}} - M^{\iota^{*}_{k-1}}_{\tau^{*}_{k-1}}\right) \nonumber \\
& = Y^{\iota^{*}_{N^{*}_{n}}}_{\tau^{*}_{N^{*}_{n}}} - Y^{\iota^{*}_{0}}_{\tau^{*}_{0}} +  \int_{\tau^{*}_{0}}^{\tau^{*}_{N^{*}_{n}}}\psi_{\mathbf{u}^{*}_{s}}(s){d}s - X_{N^{*}_{n}} \hspace{1em} \mathsf{P}-\text{a.s.}
\end{align}

Since $N(\alpha^{*}) < \infty$ almost surely, the sequences $\{\tau^{*}_{N^{*}_{n}}\}_{n \ge 1}$ and $\{\iota^{*}_{N^{*}_{n}}\}_{n \ge 1}$ converge almost surely to $\tau^{*}_{N(\alpha^{*})} \le T$ and $\iota^{*}_{N(\alpha^{*})} = \mathbf{u}^{*}_{T}$ respectively. Noting that $Y^{i} \in \mathcal{S}^{2}$ and $\psi_{i} \in \mathcal{M}^{2}$ for every $i \in \mathbb{I}$, we can prove the claim by showing that the martingale $\left(X_{n}\right)_{n \ge 0}$ converges in $L^{2}$ and $\mathsf{E}[(\sup_{n \ge 0}|X_{n}|)^{2}] < \infty$. For this it suffices to prove the hypothesis of Proposition~\ref{Proposition:Optimal-Switching-ConvergenceOfL2DiscreteParameterMartingales}. Towards this end, we apply Fatou's Lemma to the increasing process $\left(R_{n}\right)_{n \ge 0}$ associated with the $\mathbb{G}$-submartingale $\left(X^{2}_{n}\right)_{n \ge 0}$ to get
\begin{equation}\label{eq:Optimal-Switching-FatouLemmaCompensator}
\mathsf{E}\left[R_{\infty}\right] \le \liminf_{n \to \infty}\mathsf{E}\left[R_{n}\right].
\end{equation}
For $n \ge 1$, the random variable $R_{n}$ can be decomposed as follows \cite[p.~148]{Neveu1975}:
\begin{equation}\label{eq:Optimal-Switching-CompensatorDecomposition}
R_{n} = \sum_{k = 0}^{n-1}R_{k+1} - R_{k} = \sum_{k = 0}^{n-1}\mathsf{E}\left[\left(X_{k+1}-X_{k}\right)^{2} \bigm \vert \mathcal{G}_{k}\right] = \sum_{k = 0}^{n-1}\mathsf{E}\left[\left(\xi_{k+1}\right)^{2} \bigm \vert \mathcal{G}_{k}\right].
\end{equation}
Using equation~\eqref{eq:Optimal-Switching-CompensatorDecomposition} in \eqref{eq:Optimal-Switching-FatouLemmaCompensator} and applying the tower property of conditional expectations leads to
\begin{equation}\label{eq:Optimal-Switching-FatouLemmaAndCompensatorDecomposition} 
\mathsf{E}\left[R_{\infty}\right] \le \liminf_{n \to \infty}\mathsf{E}\left[\sum_{k = 0}^{n-1}\left(\xi_{k+1}\right)^{2}\right].
\end{equation}
The inequalities leading up to~\eqref{eq:Optimal-Switching-UniformBoundOnTheSquaredDifferences} above show that the right-hand side of \eqref{eq:Optimal-Switching-FatouLemmaAndCompensatorDecomposition} is finite and we conclude by applying Proposition~\ref{Proposition:Optimal-Switching-ConvergenceOfL2DiscreteParameterMartingales}. \end{proof}

\section*{Acknowledgments}
% Enter the text of acknowledgments here
This research was partially supported by EPSRC grant EP/K00557X/1. The author would like to thank his PhD supervisor J. Moriarty and colleague T. De Angelis for their feedback which led to an improved draft of the paper. The author also expresses his gratitude to others who commented on a previous version of the manuscript.

%%%%%%%%%%%%%%%%%%%%%%%%%%%%%%%%%%%%%%%%%%%%%%%%%%%%%%%%%%%%%%%%%%%%%%%%%%%%%%%%%%%%%%%%%%%%%%%%%%%%%%%%%%%%%%%%%%%%%%%%%%%%%%%%%%%%%%%%%%%%%%%%%%%%%%%%%%%%%%%%%%%%%%%%%%%%%%%%%%%%%%%%%%%%%%%%%%%%%%%%%%%%%%%%%%%%%%%%%%%%%%%%%%%%%%%%%%%%%%%%%%%%%%%%%%%%%%%%%%%%%%%%%%%%%%%%

%\bibliographystyle{hplain} % outcomment this and next line in Case 1
%\bibliography{../../library} % if more than one, comma separated

\end{document}